\documentclass[11pt,a4paper]{article}
\newcommand{\TemplateStyle}{manuscript}
\providecommand{\TemplateStyle}{manuscript}
\usepackage{etoolbox}
\newif\ifTemplateManuscript
\newif\ifTemplateProject
\newif\ifTemplateChinese
\ifdefstring{\TemplateStyle}{manuscript}{\TemplateManuscripttrue}{
  \ifdefstring{\TemplateStyle}{project}{\TemplateProjecttrue}{
    \ifdefstring{\TemplateStyle}{chinese}{\TemplateChinesetrue}{
      \PackageError{layout_settings}{Unknown TemplateStyle}{Use manuscript, project, or chinese.}}}}
\providecommand{\ProjectHeader}{Research Plan}
\providecommand{\TemplateLegacyMath}{true}

\usepackage{iftex}
\ifPDFTeX
  \usepackage[utf8]{inputenc}
  \usepackage{CJKutf8}
\else
  \ifXeTeX
    \usepackage{fontspec}
    \usepackage{xeCJK}
    \IfFontExistsTF{DFKai-SB}
      {\setCJKmainfont{DFKai-SB}[AutoFakeBold=2]
       \setCJKsansfont{DFKai-SB}[AutoFakeBold=2]}
      {\IfFontExistsTF{BiauKai}
        {\setCJKmainfont{BiauKai}[AutoFakeBold=2]
         \setCJKsansfont{BiauKai}[AutoFakeBold=2]}
        {
         \IfFileExists{fonts/bkai00mp.ttf}{
           \setCJKmainfont{bkai00mp.ttf}[Path=fonts/,AutoFakeBold=2,AutoFakeSlant=.2]
           \setCJKsansfont{bkai00mp.ttf}[Path=fonts/,AutoFakeBold=2,AutoFakeSlant=.2]
         }{
           \IfFontExistsTF{AR PL KaitiM Big5}{
             \setCJKmainfont{AR PL KaitiM Big5}[AutoFakeBold=2,AutoFakeSlant=.2]
             \setCJKsansfont{AR PL KaitiM Big5}[AutoFakeBold=2,AutoFakeSlant=.2]
           }{\PackageError{layout_settings}{Traditional Chinese font missing}{
             Upload the supplied fonts folder, or compile with pdfLaTeX.}}}}}
  \else
    \PackageError{layout_settings}{Unsupported engine}{Use pdfLaTeX or XeLaTeX.}
  \fi
\fi
\ifPDFTeX
  \AtBeginDocument{\begin{CJK*}{UTF8}{bkai}}
  \AtEndDocument{\clearpage\end{CJK*}}
\fi

\usepackage{amsmath}
\usepackage{amsfonts}
\usepackage{amssymb}
\usepackage{amsthm}
\usepackage{bbm}
\usepackage{esint}
\usepackage{mathrsfs}
\providecommand{\widebar}[1]{\overline{#1}}

\usepackage[
    a4paper,
    left=3.18cm,
    right=3.18cm,
    top=2.54cm,
    bottom=2.54cm
]{geometry}
\usepackage{setspace}
\usepackage{enumitem}
\usepackage{comment}
\usepackage{lineno}

\usepackage{xcolor}
\usepackage{graphicx}
\ifPDFTeX\usepackage{epstopdf}\fi
\usepackage{overpic}
\usepackage{subcaption}
\usepackage{tikz}

\usepackage{booktabs}
\usepackage{multirow}
\usepackage{rotating}
\usepackage{algorithm}
\usepackage{algpseudocode}

\ifTemplateManuscript
\usepackage{titling}

\usepackage{authblk}
\usepackage[nottoc,numbib]{tocbibind}
\fi
\usepackage{titlesec}
\usepackage{array,tabularx,float,fancyhdr,lastpage}
\ifTemplateChinese\usepackage{microtype}\fi

\usepackage{hyperref}
\usepackage[nameinlink]{cleveref}

\definecolor{trueblue}{rgb}{0.0, 0.45, 0.81}
\definecolor{lightgoldenrodyellow}{rgb}{0.98, 0.98, 0.82}
\definecolor{uclablue}{rgb}{0.33, 0.41, 0.58}
\definecolor{darkcerulean}{rgb}{0.03, 0.27, 0.49}
\definecolor{carnelian}{rgb}{0.7, 0.11, 0.11}
\definecolor{lightgray}{rgb}{.25,.25,.25}
\definecolor{midgray}{rgb}{.6,.6,.6}
\definecolor{bleudefrance}{rgb}{0.19, 0.55, 0.91}
\definecolor{columbiablue}{rgb}{0.61, 0.87, 1.0}
\definecolor{oxfordblue}{rgb}{0.0, 0.13, 0.28}
\definecolor{yaleblue}{rgb}{0.06, 0.3, 0.57}
\definecolor{harvardcrimson}{rgb}{0.79, 0.0, 0.09}
\definecolor{burgundy}{rgb}{0.50, 0.0, 0.13}
\definecolor{forestgreen}{rgb}{0.0, 0.35, 0.18}
\definecolor{darkelectricblue}{rgb}{0.33, 0.41, 0.47}
\definecolor{babyblueeyes}{rgb}{0.63, 0.79, 0.95}
\definecolor{ceruleanblue}{rgb}{0.16, 0.32, 0.75}
\definecolor{sphereblue}{RGB}{129,159,247}
\definecolor{bananayellow}{rgb}{1.0, 0.88, 0.21}
\definecolor{MSBlue}{rgb}{.204,.353,.541}

\numberwithin{equation}{section}
\numberwithin{figure}{section}
\theoremstyle{plain}
\newtheorem{theorem}{\sffamily Theorem}
\newtheorem{corollary}[theorem]{\sffamily Corollary}
\newtheorem{lemma}[theorem]{\sffamily Lemma}
\newtheorem{assumption}[theorem]{\sffamily Assumption}

\setlist[enumerate]{leftmargin=.5in}
\setlist[itemize]{leftmargin=.5in}

\graphicspath{{images/}{./}}
\usetikzlibrary{
    arrows.meta,
    calc,
    positioning,
    angles,
    quotes,
    intersections,
    3d,
    perspective,
    decorations.pathreplacing,
    decorations.markings
}

\newlength{\Oldarrayrulewidth}

\hypersetup{
    colorlinks=true,
    allcolors=trueblue,
    hypertexnames=false
}
\crefname{section}{section}{sections}
\crefname{subsection}{subsection}{subsections}
\Crefname{section}{Section}{Sections}
\Crefname{subsection}{Subsection}{Subsections}
\Crefname{figure}{Figure}{Figures}
\Crefname{corollary}{Corollary}{Corollary}

\titleformat*{\section}{\Large\bfseries\sffamily\color{black}}
\titleformat*{\subsection}{\large\bfseries\sffamily\color{black}}
\titleformat*{\subsubsection}{\bfseries\sffamily\color{black}}

\makeatletter
\def\@fnsymbol#1{\ensuremath{\ifcase#1\or \dagger\or \ddagger\or
   \mathsection\or \mathparagraph\or \|\or **\or \dagger\dagger
   \or \ddagger\ddagger \else\@ctrerr\fi}}
\makeatother
\ifTemplateManuscript
\makeatletter
\posttitle{\par\end{center}\vskip 0.2em}
\renewcommand{\maketitlehookc}{\ifx\@date\@empty\predate{}\postdate{}\fi}

\makeatother
\fi

\newcommand{\argmin}{\operatornamewithlimits{argmin}}

\ifdefstring{\TemplateLegacyMath}{true}{
\def\0{\mathbf{0}}
\def\1{\mathbf{1}}

\def\c{\mathbf{c}}

\def\z{\mathbf{z}}

\def\R{\mathbb{R}}
\def\bS{\mathbb{S}}

\def\A{\mathcal{A}}
\def\E{\mathcal{E}}

\def\M{\mathcal{M}}
\def\N{\mathcal{N}}

\def\d{\mathrm{d}}

\def\O{\mathrm{O}}

\def\rS{\mathrm{S}}

\def\Mh{\mathcal{M}_h}
\def\Nh{\mathcal{N}_h}

\ifTemplateProject\fi
}{}
\definecolor{heading}{RGB}{42,68,86}
\definecolor{placeholdergray}{RGB}{90,90,90}

\ifTemplateProject
  \setlist[itemize]{leftmargin=2.2em,itemsep=0.25em,topsep=0.35em}
  \setlist[enumerate]{leftmargin=2.4em,itemsep=0.25em,topsep=0.35em}
  \titleformat{\section}{\Large\bfseries\sffamily}{\thesection.}{0.7em}{}
  \titlespacing*{\section}{0pt}{1.2em}{0.65em}
  \titleformat{\subsection}{\large\bfseries\sffamily}{\thesubsection}{0.65em}{}
  \titlespacing*{\subsection}{0pt}{1em}{0.45em}
  \titleformat{\subsubsection}{\normalsize\bfseries\sffamily}{\thesubsubsection}{0.6em}{}
  \titlespacing*{\subsubsection}{0pt}{0.8em}{0.3em}
  \hypersetup{allcolors=trueblue}
\fi
\ifTemplateChinese
  \setlist[itemize]{leftmargin=2.2em,itemsep=0.2em,topsep=0.25em}
  \titleformat{\section}{\large\bfseries\color{heading}}{\thesection.}{0.55em}{}
  \titlespacing*{\section}{0pt}{1.05em}{0.35em}
  \hypersetup{allcolors=trueblue}

\fi

\title{\LARGE\bfseries\sffamily
Relaxation and $\Gamma$-Convergence of Stretch Energy on Surface Homeomorphisms
	}
\author{\sffamily Shu-Yung Liu\thanks{Department of Mathematics, National Taiwan Normal University, Taipei, Taiwan.
  Email: \href{mailto:lii227857@gmail.com}{lii227857@gmail.com}.}}
\date{}
\hypersetup{pdftitle={Relaxation and Gamma-Convergence of Stretch Energy on Surface Homeomorphisms},pdfauthor={Shu-Yung Liu}}

\begin{document}
\maketitle

\begin{abstract}
	Area-preserving parameterization of triangular meshes is an essential task in geometry processing, yet an exactly area-preserving simplicial mapping may not exist in general.
	In this paper, we study area preservation of simplicial mappings in the limit of mesh refinement by establishing the $\Gamma$-convergence of the discrete stretch energy, a functional that measures their area distortion.
	We first introduce its limit functional, the relaxed stretch energy on surface homeomorphisms, defined through the pushforward of the area measure, and show that it is the relaxation of the stretch energy of diffeomorphisms with respect to uniform convergence.
	Then, to compare simplicial mappings with homeomorphisms, we construct a geometric lift that requires no shape-regularity assumption on the image mesh.
	Finally, we establish the $\Gamma$-convergence and show that the minimal discrete energies converge to the continuum minimum.
	Moreover, any homeomorphism arising as a limit of lifted discrete local minimizers in fixed-radius balls is area-preserving.

	\medskip
	\noindent{\sffamily\textbf{Keywords.}} $\Gamma$-convergence, relaxation, stretch energy, surface homeomorphisms, diffeomorphic approximation.

	\noindent{\sffamily\textbf{Mathematics Subject Classification.}} 49J45, 57R12, 65D18, 49M25.
\end{abstract}

\section{Introduction}\label{sec:intro}
Area-preserving geometry is well studied, dating back to Moser's work~\cite{Mose65}, which shows that for two volume forms with equal total volume on a compact, connected, oriented manifold without boundary, there exists an orientation-preserving diffeomorphism pulling back one volume form to the other. It is then extended to compact oriented manifolds with boundary by Banyaga~\cite{Bany1974}, where the diffeomorphism may be chosen so that its restriction to the boundary is the identity, and to manifolds with corners by Bruveris et al.~\cite{Bruv18}. Therefore, by pulling back the volume form of the other manifold by a diffeomorphism, an area-preserving diffeomorphism between two manifolds is given by the composition of two diffeomorphisms.

In computer graphics, surface parameterization is a fundamental problem that facilitates downstream tasks, and area preservation is an important property in parameterization \cite{FlHo05, ShPR06}. For example, it is used in surface registration~\cite{LiYu24}, geometry images~\cite{GuGH02}, and computing spherical harmonic coefficients for shape analysis~\cite{BrGK95, GeSJ01, StLP04, StOX06, GeCC09}.

Several methods then use Moser's work in the computation of area-preserving parameterization. Zou et al.~\cite{ZoHG11} construct Moser's flow by Lie advection of differential forms, with a velocity field obtained from a Poisson problem. Dominitz and Tannenbaum~\cite{DoTa10} apply it to spherical domains. Moreover, Lee and Kazhdan~\cite{LeKa19} use the negative gradient of the logarithmic scale factor for spherical parameterizations. In contrast, rather than using a differential flow on the surface, Choi and Rycroft~\cite{ChRy18} apply density-equalizing maps to evolve an area ratio by diffusion on the parameter domain. It has been extended to multiply connected planar domains~\cite{LyCh24}, the hemisphere~\cite{GiCK21}, hemispheroids~\cite{ChSh25}, the torus~\cite{YaCh26}, and the sphere~\cite{LyLC24}.

However, a surface in practice is approximated by a triangular mesh, and a diffeomorphism is approximated by a simplicial mapping, which is piecewise affine. Since the Jacobian of a piecewise-linear simplicial map is constant on each triangle, the flow using differential operators requires a smoothed representation of the triangle-wise area ratio before computing the velocity field. The reality is that the error between the smoothed representation and the true triangle-wise area ratio may be uncontrolled, which may further cause instability of the algorithms.

Instead, many methods consider the variational framework on the discrete space. Gu et al.~\cite{GuLS16} focus on convex polyhedral meshes by establishing a variational principle of discrete optimal transport and the discrete Monge--Amp\`ere equation, which is used for the parameterization of disks~\cite{ZhSG13, SuCQ16} and spheres~\cite{NaSZ17, CuQW19}. Other methods work directly on triangular meshes, including intrinsic parameterization~\cite{DeMA02}, $L^2$-stretch minimization~\cite{YoBS04}, squared triangle-area distortion~\cite{KeXY22}, stretch energy minimization~\cite{YuLW19, YuLL19, YuLL20, Yueh23, SuYu24}, and authalic energy minimization~\cite{LiYu24, LiYu25, LiYu26}.

While the variational framework is effective in practice, it lacks a connection to the theory of smooth surfaces. More precisely, although the triangle-wise area distortion decreases well in computation, it remains unknown whether it produces discrete area-preserving mappings and corresponds to area-preserving homeomorphisms on surfaces.

The prior work~\cite{LiKY26} addresses this problem by completing the stretch energy minimization \cite{Yueh23}. The stretch energy, originally defined on simplicial mappings, was extended to diffeomorphisms between Riemannian surfaces, and it was shown that every critical point is area-preserving. They have also shown that the discrete stretch energy converges to its smooth counterpart by interpolating the diffeomorphism, and that the error of the optimal value is bounded by the squared mesh size. This shows that, while an exactly area-preserving simplicial mapping may not exist, the discrete problem is still meaningful.

Nonetheless, several questions remain. Because simplicial mappings and diffeomorphisms lie in different spaces, the mappings cannot be compared directly, and hence only the convergence of functional values is established. Even if we transform a simplicial mapping into a diffeomorphism, the smoothness may be lost in the limit. This motivates us to propose the relaxation of the smooth stretch energy to homeomorphisms and to establish $\Gamma$-convergence, which justifies the convergence of the variational problem.

\subsection{Main results}\label{sec:main_results}
Let $\M, \N \subset \R^3$ be two compact, connected, oriented smooth surfaces and let $f:\M \to \N$ be an orientation-preserving diffeomorphism. The prior work~\cite{LiKY26} measures the distortion of the local area ratio, defined by $J_f = f^*\d A_\N / \d A_\M$ under the area normalization
\[
|\M| := \int_\M 1 \, \d A_\M = \int_\M J_f \, \d A_\M = \int_\N 1 \, \d A_\N =: |\N|,
\]
by the \emph{stretch energy functional}
\[
\mathcal{E}_\rS(f) = \int_\M J^2_f \, \d A_\M.
\]
It is in line with the squared $L^2$ difference
\begin{equation} \label{eq:energy_gap_smooth}
\mathcal{E}_\rS(f) - |\M| = \int_\M J_f^2 \,\d A_\M -2 \int_\M J_f \, \d A_\M + \int_\M 1 \, \d A_\M = \| J_f - 1\|_{L^2(\M)}^2 \geq 0,
\end{equation}
where the equality holds if and only if $f$ is area-preserving.
Let $\Mh$ be a triangulated surface approximating $\M$ with maximal triangle diameter $h$. The discrete counterpart for a simplicial map $f_h:\Mh \to \Nh$ is
\[
E_\rS(f_h)= \int_{\Mh} J_{f_h}^2 \, \d A  = \sum_{\tau \in \mathrm{triangles}} \int_\tau J_{f_h}|_\tau^2 \, \d A.
\]

In this work, we focus on the mappings themselves. Since diffeomorphisms may converge to a homeomorphism that is not differentiable, we consider the space of orientation-preserving homeomorphisms with the uniform distance
\[
\mathrm{Homeo}^+(\M,\N),\qquad
d_H(f, g) = \sup_{x \in \M} |f(x) - g(x) |.
\]
Defining the measure $\mu_f(B)=|f(B)|_\N$ on $\M$, we propose the \emph{relaxed stretch energy}
\begin{equation*}
	\widebar{\E_\rS}(f) =
	\begin{cases}
		\displaystyle\int_\M \widehat J_f^2 \, \d A_\M
		& \text{if }\mu_f = \widehat J_f \,\d A_\M\text{ for some } \widehat J_f\in L^2(\M),\\
		+\infty & \text{otherwise},
	\end{cases}
\end{equation*}
and show that it is exactly the relaxation of $\E_\rS$ with respect to $d_H$; see Corollary~\ref{cor:energy_relax}.
To compare discrete maps with homeomorphisms, we construct a lift $\mathrm{L}_h$ that sends each admissible discrete map $f_h$, without shape regularity on its image, to $\mathrm{L}_hf_h\in\mathrm{Homeo}^+(\M,\N)$. Our main result is the $\Gamma$-convergence \cite{DaMa93, Brai02}
\[
\widebar{E_\rS}_h \xrightarrow[\;h\to0\;]{\;\Gamma(d_H) \;} \widebar{\E_\rS},
\qquad
\widebar{E_\rS}_h(u) = \inf \big\{ E_\rS(f_h) \;\big|\; f_h\text{ admissible},\ \mathrm{L}_h f_h = u \big\};
\]
see Theorem~\ref{thm:main}. As consequences, the minimal discrete energies converge to $\min\widebar{\E_\rS}=|\M|$ (Corollary~\ref{cor:min_converge}), and local minimality passes to the limit: if each $f_h$ minimizes $E_\rS$ among admissible maps whose lifts lie within a fixed distance $\rho$ of $\mathrm{L}_hf_h$, then every limit of $\mathrm{L}_hf_h$ in $\mathrm{Homeo}^+(\M,\N)$ minimizes $\widebar{\E_\rS}$ on the ball of radius $\rho$, and is therefore area-preserving (Theorem~\ref{thm:local_min_limit}).

\subsection{Outline of the proof}\label{sec:proof_idea}

The $\Gamma$-convergence includes both the liminf and limsup inequalities. We prove the liminf inequality by the lower semicontinuity and the energy comparison for the lift:
\begin{equation} \label{eq:idea_liminf}
\widebar{\E_\rS}(u)
\leq \liminf_{h\to0}\widebar{\E_\rS}(\mathrm{L}_hf_h)
\leq \liminf_{h\to0}E_\rS(f_h),
\end{equation}
which are introduced in Section~\ref{sec:lsc} and Section~\ref{sec:lift}, respectively.
Here, we require that the projection from the image mesh $f_h(\Mh)$ to the smooth manifold is bijective without shape regularity, and that the lift of $f_h$, $\mathrm{L}_hf_h$, is in $\mathrm{Homeo}^+(\M,\N)$.

For the limsup inequality, given $u \in \mathrm{Homeo}^+(\M,\N)$, we show it by two approximations:
\begin{equation} \label{eq:idea_limsup}
E_\rS(\mathrm{I}_hg_j)
\xrightarrow[h\to0]{\text{interpolation}}
\E_\rS(g_j)
\xrightarrow[j\to\infty]{\text{smoothing}}
\widebar{\E_\rS}(u),
\end{equation}
where the first one indicates that the piecewise-linear interpolant approaches a fixed diffeomorphism, shown in Section~\ref{sec:interp}, and the second one represents that the diffeomorphisms approach a homeomorphism, shown in Section~\ref{sec:density}.

With these two pieces, we establish the main theorem, derive infimum convergence without compactness, and obtain the limit of local minimizers in Section~\ref{sec:main}.

\section{Lower semicontinuity}\label{sec:lsc}
As mentioned in Section~\ref{sec:main_results}, the prior work \cite{LiKY26} focuses on the stretch energy $\E_\rS(f)$ defined on orientation-preserving diffeomorphisms. However, the smoothness may be lost during the limit, and thus assigning $+\infty$ to all other orientation-preserving homeomorphisms would not be lower semicontinuous under uniform convergence.

For instance, under longitude--height coordinates $(\theta,z)$ on $\bS^2$ away from the poles, consider the sequence $\{f_j\}$ (Figure~\ref{fig:lsc_example}):
\[
f_j(\theta, z) = \big(\theta + \sqrt{z^2 + j^{-2}}, z\big) \quad \longrightarrow \quad
f(\theta,z) = (\theta + |z|, z).
\]
Since $0 \leq \sqrt{z^2+j^{-2}}-|z| \leq j^{-1}$, both $f_j$ and $f_j^{-1}$ converge uniformly to $f$ and $f^{-1}$, respectively. In longitude--height coordinates, the sphere is parametrized by
\[
X(\theta,z)=\big(\sqrt{1-z^2}\cos\theta,\sqrt{1-z^2}\sin\theta,z\big),
\qquad -1<z<1.
\]
Since $|\partial_\theta X\times\partial_z X|=1$, its area element is $\d A_{\bS^2}=\d\theta\,\d z$.
The coordinate Jacobian matrix of $f_j$ is
\[
D_{(\theta,z)}f_j=
\begin{pmatrix}
1 & \dfrac{z}{\sqrt{z^2+j^{-2}}} \\
0 & 1
\end{pmatrix},
\qquad \det D_{(\theta,z)}f_j=1.
\]
Thus each $f_j$ preserves spherical area.
The limit map $f$ also preserves area, since for each fixed $z$ it translates $\theta$ modulo $2\pi$, preserving $\d\theta$, and the poles have zero area.
Nonetheless, the limit $f$ is non-smooth and therefore
\[
\E_\rS(f) = +\infty > \liminf_{j \to \infty} \E_\rS(f_j) = 4 \pi,
\]
which violates lower semicontinuity.

\begin{figure}[tb]
\centering
\begin{tikzpicture}[
  >=Latex,
  font=\footnotesize,
  every node/.style={inner sep=1.5pt},
  eqv/.style={gray!60, line width=0.4pt},
  eqh/.style={gray!50, line width=0.35pt, densely dashed},
  mer/.style={gray!55, line width=0.5pt, densely dotted},
  merh/.style={draw=none},
  fj/.style={gray!55, line width=0.9pt},
  fjh/.style={gray!40, line width=0.5pt, densely dashed},
  fl/.style={black, line width=0.9pt},
  flh/.style={black!60, line width=0.5pt, densely dashed},
  axis/.style={->, line width=0.5pt},
  ttl/.style={font=\footnotesize, align=center}
]
\begin{scope}
  \fill[gray!6] (0,0) circle (2);
  \draw[line width=0.6pt] (0,0) circle (2);
  \draw[eqv] (0.000,-0.618) -- (0.105,-0.617) -- (0.209,-0.615) -- (0.313,-0.610) -- (0.416,-0.605) -- (0.518,-0.597) -- (0.618,-0.588) -- (0.717,-0.577) -- (0.813,-0.565) -- (0.908,-0.551) -- (1.000,-0.535) -- (1.089,-0.518) -- (1.176,-0.500) -- (1.259,-0.480) -- (1.338,-0.459) -- (1.414,-0.437) -- (1.486,-0.414) -- (1.554,-0.389) -- (1.618,-0.363) -- (1.677,-0.337) -- (1.732,-0.309) -- (1.782,-0.281) -- (1.827,-0.251) -- (1.867,-0.221) -- (1.902,-0.191) -- (1.932,-0.160) -- (1.956,-0.128) -- (1.975,-0.097) -- (1.989,-0.065) -- (1.997,-0.032) -- (2.000,-0.000);
  \draw[eqh] (2.000,-0.000) -- (1.997,0.032) -- (1.989,0.065) -- (1.975,0.097) -- (1.956,0.128) -- (1.932,0.160) -- (1.902,0.191) -- (1.867,0.221) -- (1.827,0.251) -- (1.782,0.281) -- (1.732,0.309) -- (1.677,0.337) -- (1.618,0.363) -- (1.554,0.389) -- (1.486,0.414) -- (1.414,0.437) -- (1.338,0.459) -- (1.259,0.480) -- (1.176,0.500) -- (1.089,0.518) -- (1.000,0.535) -- (0.908,0.551) -- (0.813,0.565) -- (0.717,0.577) -- (0.618,0.588) -- (0.518,0.597) -- (0.416,0.605) -- (0.313,0.610) -- (0.209,0.615) -- (0.105,0.617) -- (0.000,0.618) -- (-0.105,0.617) -- (-0.209,0.615) -- (-0.313,0.610) -- (-0.416,0.605) -- (-0.518,0.597) -- (-0.618,0.588) -- (-0.717,0.577) -- (-0.813,0.565) -- (-0.908,0.551) -- (-1.000,0.535) -- (-1.089,0.518) -- (-1.176,0.500) -- (-1.259,0.480) -- (-1.338,0.459) -- (-1.414,0.437) -- (-1.486,0.414) -- (-1.554,0.389) -- (-1.618,0.363) -- (-1.677,0.337) -- (-1.732,0.309) -- (-1.782,0.281) -- (-1.827,0.251) -- (-1.867,0.221) -- (-1.902,0.191) -- (-1.932,0.160) -- (-1.956,0.128) -- (-1.975,0.097) -- (-1.989,0.065) -- (-1.997,0.032) -- (-2.000,0.000);
  \draw[eqv] (-2.000,0.000) -- (-1.997,-0.032) -- (-1.989,-0.065) -- (-1.975,-0.097) -- (-1.956,-0.128) -- (-1.932,-0.160) -- (-1.902,-0.191) -- (-1.867,-0.221) -- (-1.827,-0.251) -- (-1.782,-0.281) -- (-1.732,-0.309) -- (-1.677,-0.337) -- (-1.618,-0.363) -- (-1.554,-0.389) -- (-1.486,-0.414) -- (-1.414,-0.437) -- (-1.338,-0.459) -- (-1.259,-0.480) -- (-1.176,-0.500) -- (-1.089,-0.518) -- (-1.000,-0.535) -- (-0.908,-0.551) -- (-0.813,-0.565) -- (-0.717,-0.577) -- (-0.618,-0.588) -- (-0.518,-0.597) -- (-0.416,-0.605) -- (-0.313,-0.610) -- (-0.209,-0.615) -- (-0.105,-0.617) -- (-0.000,-0.618);
  \draw[merh] (-0.050,-1.923) -- (-0.211,-1.964) -- (-0.293,-1.969) -- (-0.356,-1.966) -- (-0.408,-1.958);
  \draw[mer] (-0.408,-1.958) -- (-0.454,-1.947) -- (-0.494,-1.933) -- (-0.531,-1.918) -- (-0.565,-1.902) -- (-0.596,-1.885) -- (-0.626,-1.866) -- (-0.653,-1.847) -- (-0.679,-1.827) -- (-0.703,-1.806) -- (-0.726,-1.785) -- (-0.748,-1.763) -- (-0.769,-1.741) -- (-0.788,-1.718) -- (-0.807,-1.695) -- (-0.825,-1.671) -- (-0.842,-1.647) -- (-0.859,-1.623) -- (-0.875,-1.598) -- (-0.890,-1.574) -- (-0.904,-1.548) -- (-0.918,-1.523) -- (-0.931,-1.497) -- (-0.944,-1.471) -- (-0.956,-1.445) -- (-0.967,-1.419) -- (-0.978,-1.392) -- (-0.989,-1.365) -- (-0.999,-1.338) -- (-1.009,-1.311) -- (-1.018,-1.283) -- (-1.027,-1.256) -- (-1.035,-1.228) -- (-1.043,-1.200) -- (-1.051,-1.171) -- (-1.058,-1.143) -- (-1.065,-1.114) -- (-1.071,-1.086) -- (-1.077,-1.057) -- (-1.083,-1.028) -- (-1.088,-0.998) -- (-1.093,-0.969) -- (-1.098,-0.939) -- (-1.103,-0.910) -- (-1.107,-0.880) -- (-1.110,-0.850) -- (-1.114,-0.820) -- (-1.117,-0.789) -- (-1.119,-0.759) -- (-1.122,-0.728) -- (-1.124,-0.698) -- (-1.125,-0.667) -- (-1.127,-0.636) -- (-1.128,-0.604) -- (-1.129,-0.573) -- (-1.129,-0.542) -- (-1.129,-0.510) -- (-1.129,-0.478) -- (-1.129,-0.446) -- (-1.128,-0.414) -- (-1.127,-0.382) -- (-1.125,-0.350) -- (-1.124,-0.318) -- (-1.122,-0.285) -- (-1.119,-0.252) -- (-1.117,-0.219) -- (-1.114,-0.186) -- (-1.110,-0.153) -- (-1.107,-0.120) -- (-1.103,-0.086) -- (-1.098,-0.053) -- (-1.093,-0.019) -- (-1.088,0.015) -- (-1.083,0.049) -- (-1.077,0.083) -- (-1.071,0.118) -- (-1.065,0.152) -- (-1.058,0.187) -- (-1.051,0.222) -- (-1.043,0.257) -- (-1.035,0.292) -- (-1.027,0.328) -- (-1.018,0.364) -- (-1.009,0.399) -- (-0.999,0.436) -- (-0.989,0.472) -- (-0.978,0.508) -- (-0.967,0.545) -- (-0.956,0.582) -- (-0.944,0.619) -- (-0.931,0.656) -- (-0.918,0.694) -- (-0.904,0.732) -- (-0.890,0.770) -- (-0.875,0.808) -- (-0.859,0.847) -- (-0.842,0.886) -- (-0.825,0.926) -- (-0.807,0.966) -- (-0.788,1.006) -- (-0.769,1.046) -- (-0.748,1.087) -- (-0.726,1.129) -- (-0.703,1.171) -- (-0.679,1.214) -- (-0.653,1.257) -- (-0.626,1.301) -- (-0.596,1.346) -- (-0.565,1.392) -- (-0.531,1.439) -- (-0.494,1.487) -- (-0.454,1.537) -- (-0.408,1.589) -- (-0.356,1.644) -- (-0.293,1.704) -- (-0.211,1.773) -- (-0.050,1.877);
  \draw[fjh] (0.039,-1.925) -- (0.158,-1.973) -- (0.213,-1.983) -- (0.249,-1.984);
  \draw[fj] (0.249,-1.984) -- (0.275,-1.980) -- (0.294,-1.973) -- (0.308,-1.963) -- (0.317,-1.952) -- (0.322,-1.940) -- (0.324,-1.926) -- (0.324,-1.911) -- (0.321,-1.895) -- (0.316,-1.879) -- (0.309,-1.861) -- (0.300,-1.843) -- (0.289,-1.825) -- (0.277,-1.805) -- (0.263,-1.786) -- (0.248,-1.765) -- (0.232,-1.744) -- (0.215,-1.723) -- (0.197,-1.701) -- (0.178,-1.679) -- (0.158,-1.656) -- (0.137,-1.633) -- (0.116,-1.609) -- (0.094,-1.585) -- (0.071,-1.561) -- (0.048,-1.536) -- (0.025,-1.511) -- (0.001,-1.486) -- (-0.023,-1.460) -- (-0.048,-1.433) -- (-0.072,-1.407) -- (-0.097,-1.380) -- (-0.121,-1.352) -- (-0.146,-1.325) -- (-0.170,-1.297) -- (-0.195,-1.269) -- (-0.219,-1.240) -- (-0.242,-1.211) -- (-0.266,-1.182) -- (-0.288,-1.153) -- (-0.311,-1.123) -- (-0.332,-1.094) -- (-0.353,-1.063) -- (-0.374,-1.033) -- (-0.393,-1.003) -- (-0.411,-0.972) -- (-0.428,-0.941) -- (-0.445,-0.910) -- (-0.460,-0.879) -- (-0.473,-0.848) -- (-0.485,-0.817) -- (-0.496,-0.786) -- (-0.505,-0.754) -- (-0.513,-0.723) -- (-0.519,-0.691) -- (-0.524,-0.659) -- (-0.526,-0.628) -- (-0.527,-0.596) -- (-0.526,-0.565) -- (-0.524,-0.533) -- (-0.519,-0.501) -- (-0.513,-0.469) -- (-0.505,-0.437) -- (-0.496,-0.405) -- (-0.485,-0.374) -- (-0.473,-0.341) -- (-0.460,-0.309) -- (-0.445,-0.277) -- (-0.428,-0.245) -- (-0.411,-0.212) -- (-0.393,-0.179) -- (-0.374,-0.146) -- (-0.353,-0.113) -- (-0.332,-0.080) -- (-0.311,-0.047) -- (-0.288,-0.013) -- (-0.266,0.021) -- (-0.242,0.055) -- (-0.219,0.090) -- (-0.195,0.125) -- (-0.170,0.160) -- (-0.146,0.195) -- (-0.121,0.231) -- (-0.097,0.267) -- (-0.072,0.303) -- (-0.048,0.340) -- (-0.023,0.377) -- (0.001,0.415) -- (0.025,0.452) -- (0.048,0.491) -- (0.071,0.529) -- (0.094,0.568) -- (0.116,0.607) -- (0.137,0.647) -- (0.158,0.687) -- (0.178,0.728) -- (0.197,0.769) -- (0.215,0.811) -- (0.232,0.853) -- (0.248,0.895) -- (0.263,0.938) -- (0.277,0.982) -- (0.289,1.026) -- (0.300,1.070) -- (0.309,1.116) -- (0.316,1.162) -- (0.321,1.209) -- (0.324,1.256) -- (0.324,1.305) -- (0.322,1.354) -- (0.317,1.405) -- (0.308,1.457) -- (0.294,1.511) -- (0.275,1.567) -- (0.249,1.626) -- (0.213,1.690) -- (0.158,1.764) -- (0.039,1.875);
  \draw[flh] (0.035,-1.926) -- (0.140,-1.976) -- (0.186,-1.987) -- (0.216,-1.988);
  \draw[fl] (0.216,-1.988) -- (0.236,-1.985) -- (0.250,-1.978) -- (0.258,-1.969) -- (0.262,-1.958) -- (0.263,-1.945) -- (0.260,-1.932) -- (0.255,-1.917) -- (0.248,-1.901) -- (0.238,-1.884) -- (0.226,-1.867) -- (0.212,-1.849) -- (0.197,-1.830) -- (0.180,-1.810) -- (0.162,-1.790) -- (0.142,-1.770) -- (0.121,-1.749) -- (0.098,-1.727) -- (0.075,-1.705) -- (0.051,-1.682) -- (0.025,-1.659) -- (-0.001,-1.635) -- (-0.028,-1.611) -- (-0.056,-1.586) -- (-0.084,-1.561) -- (-0.114,-1.535) -- (-0.143,-1.509) -- (-0.174,-1.483) -- (-0.205,-1.456) -- (-0.236,-1.429) -- (-0.268,-1.401) -- (-0.300,-1.373) -- (-0.332,-1.344) -- (-0.365,-1.315) -- (-0.398,-1.286) -- (-0.431,-1.256) -- (-0.464,-1.226) -- (-0.498,-1.196) -- (-0.531,-1.165) -- (-0.564,-1.133) -- (-0.598,-1.102) -- (-0.631,-1.070) -- (-0.665,-1.037) -- (-0.698,-1.004) -- (-0.731,-0.971) -- (-0.763,-0.938) -- (-0.796,-0.904) -- (-0.828,-0.870) -- (-0.860,-0.835) -- (-0.892,-0.800) -- (-0.923,-0.765) -- (-0.954,-0.730) -- (-0.985,-0.694) -- (-1.015,-0.658) -- (-1.044,-0.621) -- (-1.073,-0.584) -- (-1.101,-0.547) -- (-1.129,-0.510) -- (-1.101,-0.484) -- (-1.073,-0.458) -- (-1.044,-0.431) -- (-1.015,-0.404) -- (-0.985,-0.377) -- (-0.954,-0.350) -- (-0.923,-0.322) -- (-0.892,-0.294) -- (-0.860,-0.265) -- (-0.828,-0.236) -- (-0.796,-0.207) -- (-0.763,-0.178) -- (-0.731,-0.148) -- (-0.698,-0.118) -- (-0.665,-0.087) -- (-0.631,-0.056) -- (-0.598,-0.025) -- (-0.564,0.007) -- (-0.531,0.039) -- (-0.498,0.071) -- (-0.464,0.104) -- (-0.431,0.137) -- (-0.398,0.171) -- (-0.365,0.205) -- (-0.332,0.239) -- (-0.300,0.274) -- (-0.268,0.309) -- (-0.236,0.345) -- (-0.205,0.381) -- (-0.174,0.417) -- (-0.143,0.454) -- (-0.114,0.492) -- (-0.084,0.529) -- (-0.056,0.568) -- (-0.028,0.606) -- (-0.001,0.645) -- (0.025,0.685) -- (0.051,0.725) -- (0.075,0.766) -- (0.098,0.807) -- (0.121,0.848) -- (0.142,0.891) -- (0.162,0.933) -- (0.180,0.977) -- (0.197,1.020) -- (0.212,1.065) -- (0.226,1.110) -- (0.238,1.156) -- (0.248,1.203) -- (0.255,1.250) -- (0.260,1.299) -- (0.263,1.348) -- (0.262,1.399) -- (0.258,1.452) -- (0.250,1.506) -- (0.236,1.562) -- (0.216,1.622) -- (0.186,1.687) -- (0.140,1.761) -- (0.035,1.875);
  \coordinate (corner) at (-1.129,-0.510);
  \coordinate (cj) at (-0.527,-0.596);
  \coordinate (lf) at (0.031,0.694);
  \coordinate (lfj) at (-0.071,0.304);
  \coordinate (lm) at (-0.747,1.089);
  \fill (corner) circle (1.4pt);
  \draw[line width=0.35pt] (corner) -- ++(-0.55,-0.45) node[anchor=north east, inner sep=1pt] {corner at $z=0$};
  \node[anchor=west, text=black] at ($(lf)+(0.08,0.02)$) {$f$};
  \node[anchor=west, text=gray!50!black] at ($(cj)+(0.06,0.03)$) {$f_j$};
  \node[anchor=east, text=gray!50!black] at ($(lm)+(-0.05,0.05)$) {meridian};
  \node[anchor=west, text=gray!50!black] at (2.02,-0.30) {$z=0$};
  \node[ttl] at (0,-2.55) {(a) Image of a meridian on $\bS^2$};
\end{scope}
\begin{scope}[xshift=5.9cm, x=0.62cm, y=1.7cm]
  \fill[gray!6] (-3.14,-1) rectangle (3.14,1);
  \draw[gray!60, line width=0.4pt] (-3.14,-1) rectangle (3.14,1);
  \draw[gray!50, densely dashed, line width=0.4pt] (-3.14,0) -- (3.14,0) node[anchor=west, text=gray!50!black] {$z=0$};
  \foreach \c in {-2.4,-0.6,1.2}{
    \draw[mer] (\c,-1) -- (\c,1);
    \draw[fj] plot[domain=-1:1, samples=81, variable=\z] ({\c+sqrt(\z*\z+1/9)},{\z});
    \draw[fl] plot[domain=-1:1, samples=81, variable=\z] ({\c+abs(\z)},{\z});
  }
  \fill (-0.6,0) circle (1.4pt);
  \node[anchor=west, text=black] at (2.05,0.62) {$f$};
  \node[anchor=west, text=gray!50!black, inner sep=1pt] at (1.55,0.08) {$f_j$};
  \draw[axis] (-3.4,-1) -- (3.55,-1) node[below] {$\theta$};
  \draw[axis] (-3.14,-1.12) -- (-3.14,1.2) node[left] {$z$};
  \node[anchor=east] at (-3.14,1) {$1$};
  \node[anchor=east] at (-3.14,-0.92) {$-1$};
  \node[align=left, anchor=west, font=\scriptsize] at (3.25,0.62) {$\det D f_j=1$};
  \node[ttl] at (0,-1.5) {(b) Longitude--height coordinates $(\theta,z)$};
\end{scope}
\end{tikzpicture}
\caption{Smoothness is lost in the limit: a meridian (dotted) and its images under $f_j$ (gray) and $f$ (black), (a) on $\bS^2$ and (b) in the coordinates $(\theta,z)$.}
\label{fig:lsc_example}
\end{figure}

Consequently, rather than measuring the area by the pullback area form, we consider the pushforward measure
\begin{equation} \label{eq:area_measure}
	\mu_f = (f^{-1})_\# \d A_\N
\end{equation}
such that
\[
\mu_f(B)
= \int_{(f^{-1})^{-1}(B)}\d A_\N
= |f(B)|_\N, \qquad B\in\mathcal{B}(\M),
\]
where $\mathcal{B}(\M)$ is the Borel $\sigma$-algebra on $\M$.
We then extend the local area ratio $J_f = f^*\d A_\N / \d A_\M$ to $\widehat J_f$ when
\begin{equation}
\mu_f = \widehat J_f \,\d A_\M\text{ for some } \widehat J_f\in L^2(\M)
\end{equation}
and define the \emph{relaxed stretch energy functional} as
\begin{equation}\label{eq:relaxed_energy}
\widebar{\E_\rS}(f) = \int_\M \widehat J_f^2 \, \d A_\M
\end{equation}
and set $\widebar{\E_\rS}(f) = + \infty$ otherwise. If $\widebar{\E_\rS}(f) < \infty$ and
\[
|\M| = \int_\M 1 \, \d A_\M = \int_\M \widehat J_f \, \d A_\M = \mu_f(\M) = |\N|,
\]
it inherits \eqref{eq:energy_gap_smooth} immediately
\begin{equation}\label{eq:energy_gap}
\widebar{\E_\rS}(f) - |\M| = \int_\M \widehat J_f^2 \,\d A_\M -2 \int_\M \widehat J_f \, \d A_\M + \int_\M 1 \, \d A_\M = \| \widehat J_f - 1\|^2_{L^2(\M)} \geq 0,
\end{equation}
where equality holds if and only if $\mu_f=\d A_\M$, i.e., $f$ preserves area.

To show that such a functional is a proper relaxation of the original functional, we establish the lower semicontinuity in this section. We first show that the one-sided distance $d_H$ already controls the inverses.

\begin{lemma}\label{lem:inverse_uniform}
Let $f_j,f\in\mathrm{Homeo}^+(\M,\N)$. If $f_j\to f$ uniformly on $\mathcal M$, then
$f_j^{-1}\to f^{-1}$ uniformly on $\mathcal N$.
\end{lemma}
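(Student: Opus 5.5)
The plan is a compactness argument that rests only on the compactness of $\M$ and $\N$ and on the uniform continuity of $f^{-1}$. Let $\omega$ be a modulus of continuity of $f^{-1}$ on the compact set $\N$, so that $|f^{-1}(y)-f^{-1}(y')|\le\omega(|y-y'|)$ with $\omega(t)\to0$ as $t\to0$. Such an $\omega$ exists because $f^{-1}$ is continuous on a compact metric space.

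The key identity comes from substitution. Fix $y\in\N$ and set $x_j:=f_j^{-1}(y)\in\M$, which is defined because each $f_j$ is a bijection. Then $f(x_j)$ and $y=f_j(x_j)$ differ by at most $\delta_j:=d_H(f_j,f)$:
\[
|f(x_j)-y| = |f(x_j)-f_j(x_j)| \le \delta_j .
\]
Applying $f^{-1}$ gives
\[
|f_j^{-1}(y)-f^{-1}(y)| = |f^{-1}(f(x_j)) - f^{-1}(y)| \le \omega(\delta_j).
\]
This bound does not depend on $y$. Taking the supremum over $y\in\N$ yields $\sup_{\N}|f_j^{-1}-f^{-1}|\le\omega(\delta_j)\to0$, since $\delta_j\to0$ by hypothesis.

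There is essentially no obstacle. The only points to check are that the bijectivity of each $f_j$ onto $\N$ is used to define $x_j$ for every $y$, and that uniform continuity of $f^{-1}$ follows from compactness of $\N$. Orientation plays no role, and neither does any property of the $f_j^{-1}$ beyond their existence. Should a sequential formulation be preferred, an equivalent route is to argue by contradiction: choose $y_j$ with $|f_j^{-1}(y_j)-f^{-1}(y_j)|\ge\varepsilon$, extract convergent subsequences in $\M\times\N$, and use uniform convergence together with continuity of $f$ to force the limits to coincide. The direct modulus argument above is shorter, and I would present that one.
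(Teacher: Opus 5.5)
Your proof is correct, and it takes a different, more direct route than the paper. The paper argues by contradiction. It picks $y_{j_k}$ with $|f_{j_k}^{-1}(y_{j_k})-f^{-1}(y_{j_k})|\geq\varepsilon$ and extracts a convergent subsequence of $x_{j_k}=f_{j_k}^{-1}(y_{j_k})$ using compactness of $\M$. It then combines uniform convergence with pointwise continuity of $f$ and of $f^{-1}$ at the limit point to contradict the lower bound $\varepsilon$. You instead substitute $x_j=f_j^{-1}(y)$ for each fixed $y$ and invoke uniform continuity of $f^{-1}$ on the compact set $\N$, which gives the explicit estimate
\[
\sup_{y\in\N}\big|f_j^{-1}(y)-f^{-1}(y)\big|\leq\omega\big(d_H(f_j,f)\big).
\]
Your route buys a quantitative rate controlled only by the modulus of continuity of the limit inverse; for instance, you get a linear rate when $f^{-1}$ is Lipschitz. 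It also shows that no continuity of $f$ or of the $f_j$ is needed beyond bijectivity, with compactness entering only through Heine--Cantor on $\N$. The paper's route uses compactness of $\M$ and only pointwise continuity, but gives no rate. One small point of precision: define $\omega(t):=\sup\{|f^{-1}(y)-f^{-1}(y')|:\ y,y'\in\N,\ |y-y'|\leq t\}$. This function is nondecreasing, which is what justifies passing from $|f(x_j)-y|\leq\delta_j$ to the bound $\omega(\delta_j)$.
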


\begin{proof}
We prove it by contradiction. Suppose there exist $\varepsilon>0$, a subsequence $\{j_k\}$, and points $y_{j_k}\in\N$ such that
\[
|f_{j_k}^{-1}(y_{j_k})-f^{-1}(y_{j_k})|\geq\varepsilon
\quad\text{for every }k.
\]
Set $x_{j_k}=f_{j_k}^{-1}(y_{j_k})$. Since $\M$ is compact, we choose a subsequence $\{x_{j_{k_\ell}}\}$ converging to $x$ in $\M$. The triangle inequality gives
\begin{align*}
	|y_{j_{k_\ell}} - f(x)|
	\leq
	\underbrace{|f_{j_{k_\ell}}(x_{j_{k_\ell}}) - f(x_{j_{k_\ell}})|}_{\leq \| f_{j_{k_\ell}} - f\|_\infty \to 0} +
	\underbrace{ |f(x_{j_{k_\ell}}) - f(x)| }_{\to 0},
\end{align*}
where the second term tends to zero by the continuity of $f$. Therefore,
\begin{equation}
f_{j_{k_\ell}}(x_{j_{k_\ell}}) \to f(x) \qquad\text{as }\ell\to\infty,
\label{eq:inverse_uniform_eq1}
\end{equation}
and thus $y_{j_{k_\ell}}\to y:=f(x)$ and $x=f^{-1}(y)$. By the triangle inequality once more, we obtain
\[
|f_{j_{k_\ell}}^{-1} (y_{j_{k_\ell}}) - f^{-1} (y_{j_{k_\ell}}) | \leq
\underbrace{|f_{j_{k_\ell}}^{-1} (y_{j_{k_\ell}})  - x|}_{=|x_{j_{k_\ell}} - x| \to 0} +
\underbrace{ |x - f^{-1} (y_{j_{k_\ell}})|}_{= |f^{-1}(y) - f^{-1} (y_{j_{k_\ell}})| \to 0}
\]
where the second term tends to zero by the continuity of $f^{-1}$. This contradicts its lower bound $\varepsilon$.
\end{proof}

Since the relaxed stretch energy \eqref{eq:relaxed_energy} is defined through the pushforward, the pushforward integration gives the weak convergence of the area measures and hence the lower semicontinuity.

\begin{lemma} \label{lem:weak_conv_area}
If $f_j \to f$ in $\mathrm{Homeo}^+(\M,\N)$, then the area measures defined in \eqref{eq:area_measure} satisfy $\mu_{f_j} \rightharpoonup \mu_f$ weakly as finite measures on $\M$. Moreover, $\widebar{\E_\rS}$ is lower semicontinuous on $\mathrm{Homeo}^+(\M,\N)$; that is,
\begin{equation} \label{eq:lower_semiconti}
 \widebar{\E_\rS}(f) \leq \liminf_{j \to \infty} \widebar{\E_\rS}(f_j).
\end{equation}
\end{lemma}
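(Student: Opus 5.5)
For weak convergence, I will test against an arbitrary $\varphi\in C(\M)$ and use the pushforward change of variables. By \eqref{eq:area_measure},
\[
\int_\M \varphi\,\d\mu_{f_j}=\int_\N \varphi\circ f_j^{-1}\,\d A_\N .
\]
Lemma~\ref{lem:inverse_uniform} gives $f_j^{-1}\to f^{-1}$ uniformly on $\N$. Since $\varphi$ is uniformly continuous on the compact surface $\M$, it follows that $\varphi\circ f_j^{-1}\to\varphi\circ f^{-1}$ uniformly on $\N$. Because $|\N|<\infty$, the integrals converge to $\int_\N\varphi\circ f^{-1}\,\d A_\N=\int_\M\varphi\,\d\mu_f$. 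All these measures have the same total mass $|\N|$, so this is weak convergence of finite measures.

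For the lower semicontinuity, I will first pass to a subsequence that realizes the $\liminf$. If the $\liminf$ is $+\infty$ there is nothing to prove, so assume $\widebar{\E_\rS}(f_j)\le C<\infty$ along the subsequence. Then $\mu_{f_j}=\widehat J_{f_j}\,\d A_\M$ with $\|\widehat J_{f_j}\|_{L^2(\M)}^2\le C$. Since $L^2(\M)$ is a Hilbert space, bounded sequences are weakly sequentially compact, so after a further subsequence $\widehat J_{f_j}\rightharpoonup g$ weakly in $L^2(\M)$.

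The key step is to identify $g$ as the density of $\mu_f$. For $\varphi\in C(\M)\subset L^2(\M)$, weak $L^2$ convergence gives $\int_\M \varphi\,\widehat J_{f_j}\,\d A_\M\to\int_\M\varphi g\,\d A_\M$. The first part gives $\int_\M\varphi\,\d\mu_{f_j}\to\int_\M\varphi\,\d\mu_f$. Hence
\[
\int_\M\varphi\,\d\mu_f=\int_\M\varphi g\,\d A_\M\qquad\text{for all }\varphi\in C(\M).
\]
By the Riesz representation theorem, this means $\mu_f=g\,\d A_\M$ as Radon measures; note that $g\ge0$ a.e., being a weak limit of nonnegative functions. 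So $\widehat J_f=g\in L^2(\M)$ and $\widebar{\E_\rS}(f)<\infty$. Finally, weak lower semicontinuity of the $L^2$ norm yields
\[
\widebar{\E_\rS}(f)=\|g\|_{L^2(\M)}^2\le\liminf_{j\to\infty}\|\widehat J_{f_j}\|_{L^2(\M)}^2,
\]
which is \eqref{eq:lower_semiconti}.

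The main obstacle I expect is this identification step. It needs two things: uniqueness of the measure representing a functional on $C(\M)$, which holds for finite Borel measures on a compact metric space; and the fact that a weak $L^2$ limit of densities, tested only against continuous functions, determines the measure. Both are standard, so I anticipate the argument closing without difficulty.
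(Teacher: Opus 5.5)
Your proposal is correct and follows essentially the same route as the paper: the pushforward change of variables combined with Lemma~\ref{lem:inverse_uniform} gives the weak convergence. The lower semicontinuity then comes from a liminf-realizing subsequence, weak $L^2$ compactness, identification of the limit density by testing against $C(\M)$, and weak lower semicontinuity of the $L^2$ norm.
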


\begin{proof}
For $\varphi \in C(\M)$, the pushforward integration formula applied to \eqref{eq:area_measure} gives
\[
\int_\M \varphi \,\d \mu_{f_j} = \int_\M \varphi \,\d ((f_j^{-1})_\# \d A_\N) = \int_\N \varphi \circ f_j^{-1} \,\d A_\N.
\]
By Lemma~\ref{lem:inverse_uniform}, $f_j^{-1} \to f^{-1}$ uniformly. Since $\varphi$ is continuous on the compact surface $\M$, it is uniformly continuous, and $\varphi \circ f_j^{-1} \to \varphi \circ f^{-1}$ uniformly as well.
Since $|\N|<\infty$, we obtain
\begin{equation} \label{eq:weak_conv_mu}
\int_\M \varphi\,\d\mu_{f_j}
= \int_\N \varphi \circ f_j^{-1}\,\d A_\N
\longrightarrow \int_\N \varphi \circ f^{-1}\,\d A_\N
= \int_\M \varphi\,\d\mu_f.
\end{equation}
Since this holds for arbitrary $\varphi\in C(\M)$, we obtain $\mu_{f_j}\rightharpoonup\mu_f$.

Next, the lower semicontinuity \eqref{eq:lower_semiconti} holds if $\liminf_{j\to\infty}\widebar{\E_\rS}(f_j)=+\infty$. We therefore assume the liminf is finite and choose a subsequence $\{f_{j_k}\}$ with finite energies such that
\[
\widebar{\E_\rS}(f_{j_k}) \to \liminf_{j \to \infty}\widebar{\E_\rS}(f_j).
\]
Since the energies converge to a finite limit, $\|\widehat J_{f_{j_k}}\|^2_{L^2(\M)}=\widebar{\E_\rS}(f_{j_k})$ is bounded, i.e., $\{\widehat J_{f_{j_k}}\}$ is a bounded sequence in $L^2(\M)$. Since every bounded sequence in a Hilbert space has a weakly convergent subsequence, we have
\[
\widehat J_{f_{j_{k_\ell}}} \rightharpoonup \widehat J \quad \text{weakly in }L^2(\M).
\]
In addition, every $\varphi\in C(\M)$ belongs to $L^2(\M)$ due to compactness of $\M$ and thus the weak convergence gives
\begin{equation} \label{eq:weak_conv_area_1}
\int_\M \varphi \widehat J_{f_{j_{k_\ell}}} \, \d A_\M \to \int_\M \varphi \widehat J \d A_\M.
\end{equation}
On the other hand, \eqref{eq:weak_conv_mu} gives
\begin{equation} \label{eq:weak_conv_area_2}
\int_\M \varphi \widehat J_{f_{j_{k_\ell}}} \, \d A_\M  =
\int_\M \varphi \, \d \mu_{f_{j_{k_\ell}}} \to \int_\M \varphi \d \mu_f.
\end{equation}
Taking \eqref{eq:weak_conv_area_1} and \eqref{eq:weak_conv_area_2} for every $\varphi \in C(\M)$, we obtain
\[
\mu_f = \widehat J \, \d A_\M, \qquad \widehat J = \widehat J_f\quad\text{almost everywhere}.
\]
Hence, the weak lower semicontinuity of the $L^2$ norm implies
\begin{align*}
	\widebar{\E_\rS}(f)
	= \|\widehat J_f\|^2_{L^2(\M)}
	&\leq \liminf_{\ell\to\infty}\|\widehat J_{f_{j_{k_\ell}}}\|^2_{L^2(\M)} \\
	&= \lim_{\ell\to\infty}\widebar{\E_\rS}(f_{j_{k_\ell}})
	= \liminf_{j\to\infty}\widebar{\E_\rS}(f_j),
\end{align*}
where the last equality follows from our choice of subsequence.
\end{proof}

\section{Approximation by diffeomorphisms}
\label{sec:density}
In this section, we will show the approximation of $\widebar{\E_\rS}$ by smooth $\E_\rS$, which also suggests that  $\widebar{\E_\rS}$ is indeed a relaxation of $\E_\rS$.

In dimension two, the previous works~\cite{Mois77,Munk60} have shown that a homeomorphism between compact smooth surfaces can be uniformly approximated by smooth diffeomorphisms. However, the uniform approximation alone does not ensure the Jacobian approximation, and hence the energy approximation. On the other hand, the work of Moser and Banyaga \cite{Mose65,Bany1974} gives the control of smooth area but not the displacement from the original map.

This motivates us to combine both techniques, where we approximate the homeomorphism by smooth diffeomorphisms while locally correcting the area form by a Moser flow. To do this, we introduce a cell decomposition and provide an estimate in the following lemma.

\begin{lemma} \label{lem:cell_decompose}
	For each $\delta>0$, let $\mathcal{P}_\delta(\M)$ be a finite cell decomposition of $\M$ whose maximum diameter is less than $\delta$. Then, for $\widehat J_f\in L^2(\M)$, the cellwise average, defined on each cell $Q\in\mathcal{P}_\delta(\M)$, satisfies
	\begin{equation} \label{eq:cell_decompose}
		\mathrm{C}_\delta\widehat J_f :=\frac{1}{|Q|}\int_Q \widehat J_f\,\d A_\M
		\longrightarrow  \widehat J_f
		\quad\text{in }L^2(\M)
	\end{equation}
	as $\delta \to 0$, for any choice of the decompositions $\mathcal{P}_\delta(\M)$.
\end{lemma}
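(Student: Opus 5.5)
The plan is the standard density argument: approximate $\widehat J_f$ by a continuous function, and use that $\mathrm{C}_\delta$ is a linear contraction on $L^2(\M)$. Write $\mathrm{C}_\delta g$ for the cellwise average of any $g\in L^1(\M)$. Cells are assumed measurable with positive area. They cover $\M$ up to a null set, since boundaries of cells in a finite cell decomposition have zero area. Then $\mathrm{C}_\delta$ is the conditional expectation onto the $\sigma$-algebra generated by the cells.

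\textbf{Step 1 (contraction).} For $g\in L^2(\M)$ and each cell $Q$, Cauchy--Schwarz (or Jensen) gives
\[
\Big(\frac{1}{|Q|}\int_Q g\,\d A_\M\Big)^2|Q|\le\int_Q g^2\,\d A_\M .
\]
Summing over $Q\in\mathcal P_\delta(\M)$ yields $\|\mathrm{C}_\delta g\|_{L^2(\M)}\le\|g\|_{L^2(\M)}$. The bound is uniform in $\delta$ and in the choice of decomposition.

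\textbf{Step 2 (continuous case).} For $\varphi\in C(\M)$, uniform continuity on the compact surface $\M$ gives a modulus $\omega(\delta)\to0$. Here $\omega$ is measured in the ambient distance, which is the one used for diameters of cells. For $x\in Q$ we have $|\mathrm{C}_\delta\varphi(x)-\varphi(x)|\le\sup_{y\in Q}|\varphi(y)-\varphi(x)|\le\omega(\delta)$, because $\operatorname{diam}Q<\delta$. Hence
\[
\|\mathrm{C}_\delta\varphi-\varphi\|_{L^2(\M)}\le\omega(\delta)|\M|^{1/2}\to0 .
\]

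\textbf{Step 3 (density).} $C(\M)$ is dense in $L^2(\M)$, since the area measure is a finite Radon measure on a compact metric space. Given $\varepsilon>0$, pick $\varphi$ with $\|\widehat J_f-\varphi\|_{L^2}<\varepsilon$. Linearity and Step 1 give
\[
\|\mathrm{C}_\delta\widehat J_f-\widehat J_f\|_{L^2}\le\|\mathrm{C}_\delta(\widehat J_f-\varphi)\|_{L^2}+\|\mathrm{C}_\delta\varphi-\varphi\|_{L^2}+\|\varphi-\widehat J_f\|_{L^2}\le2\varepsilon+\omega(\delta)|\M|^{1/2}.
\]
Take $\limsup_{\delta\to0}$ and then let $\varepsilon\to0$. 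Because Steps 1 and 2 hold uniformly over all decompositions of mesh less than $\delta$, the convergence holds for any choice of $\mathcal P_\delta(\M)$, as claimed.

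There is no real obstacle; the only point needing care is the measure-theoretic bookkeeping. One must check that $\mathrm{C}_\delta\widehat J_f$ is well defined a.e., which requires cell boundaries to be null sets and every cell to have $|Q|>0$. One must also make sure the density and contraction estimates are independent of the particular decomposition. I would state these properties of $\mathcal P_\delta(\M)$ explicitly at the start.
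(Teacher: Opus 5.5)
Your proposal is correct and follows essentially the same route as the paper: approximate $\widehat J_f$ by a continuous function, split $\|\mathrm{C}_\delta\widehat J_f-\widehat J_f\|_{L^2(\M)}$ into three terms by the triangle inequality, and bound them using the Cauchy--Schwarz contraction $\|\mathrm{C}_\delta u\|_{L^2(\M)}\le\|u\|_{L^2(\M)}$ and uniform continuity, letting $\delta\to0$ before $\varepsilon\to0$. Your explicit bookkeeping (cells of positive area with null boundaries, and the modulus of continuity taken in the same metric used for cell diameters) is a harmless refinement of points the paper leaves implicit.
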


\begin{proof}
	Since $C(\M)$ is dense in $L^2(\M)$, for every $\varepsilon>0$, there exists
	$v\in C(\M)$ such that $\|\widehat J_f-v\|_{L^2(\M)}<\varepsilon$.
	The triangle inequality gives
	\begin{equation} \label{eq:cell_decompose_1}
		\|\mathrm{C}_\delta\widehat J_f-\widehat J_f\|_{L^2(\M)}
		\leq \|\mathrm{C}_\delta(\widehat J_f-v)\|_{L^2(\M)}
		+\|\mathrm{C}_\delta v-v\|_{L^2(\M)}
		+\|v-\widehat J_f\|_{L^2(\M)}.
	\end{equation}
	For the second term of \eqref{eq:cell_decompose_1}, since $v$ is continuous on the compact surface $\M$, it is uniformly continuous. Therefore,
	\[
	\bigg(\int_\M |\mathrm{C}_\delta v - v|^2 \d A_\M \bigg)^{\tfrac{1}{2}}
	\leq |\M|^{\tfrac{1}{2}}  \sup_{x\in\M}  |\mathrm{C}_\delta v(x) - v(x)|
	\leq |\M|^{\tfrac{1}{2}}  \sup_{d_\M(x,y)\leq\delta}|v(y)-v(x)|,
	\]
	which tends to 0 as $\delta\to0$.

	For $u \in L^2(\M)$, the Cauchy--Schwarz inequality gives
	\[
	\|\mathrm{C}_\delta u\|_{L^2(\M)}^2
	=\sum_{Q\in\mathcal{P}_\delta(\M)}
	\frac{1}{|Q|}\left|\int_Q u\,\d A_\M\right|^2
	\leq\sum_{Q\in\mathcal{P}_\delta(\M)}\int_Q|u|^2\,\d A_\M
	=\|u\|_{L^2(\M)}^2.
	\]
	Hence, the first term of \eqref{eq:cell_decompose_1} is bounded by the third term, and \eqref{eq:cell_decompose_1} becomes
	\[
	\|\mathrm{C}_\delta\widehat J_f-\widehat J_f\|_{L^2(\M)}
	\leq 2\|\widehat J_f-v\|_{L^2(\M)}
	+|\M|^{\tfrac{1}{2}} \sup_{d_\M(x,y)\leq\delta}|v(y)-v(x)|.
	\]
	This proves the statement by letting $\delta\to0$ first and then $\varepsilon\to0$.

\end{proof}

Next, for a homeomorphism $f\in\mathrm{Homeo}^+(\M,\N)$ with $\widebar{\E_\rS}(f)<\infty$, we use PL approximation~\cite{Mois77} and the two-dimensional version of the smoothing theorem~\cite[Theorem~6.3 and Section~9]{Munk60} and choose a smooth approximation $a_\ell$
\begin{equation} \label{eq:surface_smoothing}
	a_\ell\in\mathrm{Diff}^+(\M,\N), \qquad
	\|a_\ell-f\|_{L^\infty(\M)} \to 0.
\end{equation}
We use the $C^\infty$ version so that the Jacobians used in the subsequent Moser correction are smooth. We then estimate its cell-average area measure.

\begin{lemma} \label{lem:area_cutoff}
	Let $\mathcal{P}_\delta(\M)$ be a finite cell decomposition of $\M$ whose maximum cell diameter is less than $\delta>0$. For every $\eta>0$,
	\begin{enumerate}
		\item for all sufficiently large $\ell$,
		\begin{equation} \label{eq:area_smooth_avg}
		\|\bar{\mu}_{a_\ell}-\mathrm{C}_\delta\widehat J_f\|_{L^2(\M)}<\eta, \qquad
		\bar{\mu}_{a_\ell}(x)
		:=\frac{\mu_{a_\ell}(Q)}{|Q|}, \quad  x\in Q\in\mathcal{P}_\delta(\M).
		\end{equation}
		\item let $\chi_\varepsilon\in C_c^\infty(\operatorname{int}Q)$ for $Q\in\mathcal{P}_\delta(\M)$ such that 	$0\leq\chi_\varepsilon\leq1$ and $\chi_\varepsilon\to1$ pointwise on $\operatorname{int}Q$ as $\varepsilon\to0$, and define
		\[
		c_{Q,\varepsilon}
		:=\frac{\int_Q\chi_\varepsilon J_{a_\ell}\,\d A_\M}
		{\int_Q\chi_\varepsilon\,\d A_\M},
		\qquad
		\sigma_{Q,\varepsilon}
		:=(1-\chi_\varepsilon)J_{a_\ell}
		+\chi_\varepsilon c_{Q,\varepsilon}.
		\]
		Then, $\sigma_{Q,\varepsilon}$ is smooth and satisfies
		\begin{equation} \label{eq:area_cutoff}
		\int_Q\sigma_{Q,\varepsilon}\,\d A_\M=\mu_{a_\ell}(Q),
		\qquad
		\big\| \sigma_{Q,\varepsilon} -  \left.\bar{\mu}_{a_\ell}\right|_Q \big\|_{L^2(Q)} < \eta
	\end{equation}
		for sufficiently small $\varepsilon$.
	\end{enumerate}
\end{lemma}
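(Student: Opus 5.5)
For part (1), the plan is to use the weak convergence of area measures from Lemma~\ref{lem:weak_conv_area}. Since $a_\ell\to f$ uniformly and $a_\ell, f\in\mathrm{Homeo}^+(\M,\N)$, that lemma gives $\mu_{a_\ell}\rightharpoonup\mu_f=\widehat J_f\,\d A_\M$. The partition $\mathcal{P}_\delta(\M)$ is finite, so both $\bar\mu_{a_\ell}$ and $\mathrm{C}_\delta\widehat J_f$ are cellwise constant, and
\[
\|\bar\mu_{a_\ell}-\mathrm{C}_\delta\widehat J_f\|_{L^2(\M)}^2=\sum_{Q}\frac{1}{|Q|}\big|\mu_{a_\ell}(Q)-\mu_f(Q)\big|^2 .
\]
It therefore suffices to show $\mu_{a_\ell}(Q)\to\mu_f(Q)$ for each of the finitely many cells. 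By the portmanteau theorem this holds once $\mu_f(\partial Q)=0$. This is where absolute continuity enters: $\mu_f=\widehat J_f\,\d A_\M$, and $\partial Q$ has zero area because cell boundaries are finite unions of rectifiable curves, which I will assume is part of what a cell decomposition means. The main obstacle is that weak convergence alone does not give convergence on sets. The fallback is a direct argument that needs no portmanteau: squeeze $\mathbf 1_Q$ between continuous functions $\varphi^-\le\mathbf 1_Q\le\varphi^+$ with $\int(\varphi^+-\varphi^-)\,\d\mu_f$ small, which is possible because $\mu_f(\partial Q)=0$, and then apply \eqref{eq:weak_conv_mu}. Choosing $\ell$ large then makes each of the finitely many terms smaller than $\eta^2/(\#\mathcal{P}_\delta)$.

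For part (2), I first check smoothness. $J_{a_\ell}$ is smooth because $a_\ell$ is a $C^\infty$ diffeomorphism, and $\chi_\varepsilon$ is smooth with compact support in $\operatorname{int}Q$. Hence $\sigma_{Q,\varepsilon}$ is smooth on $Q$, and it agrees with $J_{a_\ell}$ near $\partial Q$, so the cellwise definitions glue to a smooth function on $\M$. This gluing is what the later Moser correction will need.

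Next I verify mass preservation, which is a direct computation:
\[
\int_Q\sigma_{Q,\varepsilon}\,\d A_\M=\int_Q J_{a_\ell}\,\d A_\M-\int_Q\chi_\varepsilon J_{a_\ell}\,\d A_\M+c_{Q,\varepsilon}\int_Q\chi_\varepsilon\,\d A_\M=\int_Q J_{a_\ell}\,\d A_\M=\mu_{a_\ell}(Q).
\]
The last equality is the change of variables $\mu_{a_\ell}(Q)=|a_\ell(Q)|_\N=\int_Q J_{a_\ell}\,\d A_\M$, valid for diffeomorphisms.

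For the $L^2$ estimate, note that $\chi_\varepsilon\to\mathbf 1_{\operatorname{int}Q}$ pointwise and $|\partial Q|=0$, so dominated convergence applies, with dominating functions $J_{a_\ell}$ (bounded on compact $\M$) and $1$. It gives
\[
\int_Q\chi_\varepsilon J_{a_\ell}\,\d A_\M\to\mu_{a_\ell}(Q), \qquad \int_Q\chi_\varepsilon\,\d A_\M\to|Q|,
\]
hence $c_{Q,\varepsilon}\to\bar\mu_{a_\ell}|_Q$. Then
\[
\sigma_{Q,\varepsilon}-\bar\mu_{a_\ell}|_Q=(1-\chi_\varepsilon)\big(J_{a_\ell}-\bar\mu_{a_\ell}|_Q\big)+\chi_\varepsilon\big(c_{Q,\varepsilon}-\bar\mu_{a_\ell}|_Q\big).
\]
The first term tends to $0$ in $L^2(Q)$ by dominated convergence, since $1-\chi_\varepsilon\to0$ almost everywhere on $Q$. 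The second is bounded by $|Q|^{1/2}|c_{Q,\varepsilon}-\bar\mu_{a_\ell}|_Q|\to0$. So for $\varepsilon$ small (with $\ell$ fixed) the norm is below $\eta$. The only delicate point in part (2) is the same null-boundary assumption $|\partial Q|=0$; everything else is routine.
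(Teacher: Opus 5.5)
Your proposal is correct and follows essentially the same route as the paper. For part (1) you use the weak convergence $\mu_{a_\ell}\rightharpoonup\mu_f$ from Lemma~\ref{lem:weak_conv_area} together with the Portmanteau theorem, justified by $|\partial Q|_\M=0$ and the absolute continuity of $\mu_f$; for part (2) you use the direct mass computation and dominated convergence. Your sandwich fallback and your splitting $\sigma_{Q,\varepsilon}-\bar\mu_{a_\ell}|_Q=(1-\chi_\varepsilon)(J_{a_\ell}-\bar\mu_{a_\ell}|_Q)+\chi_\varepsilon(c_{Q,\varepsilon}-\bar\mu_{a_\ell}|_Q)$ are only slightly more explicit versions of the paper's Portmanteau citation and its pointwise-convergence-plus-dominated-convergence step.
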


\begin{proof}
By Lemma~\ref{lem:weak_conv_area}, $\mu_{a_\ell}\rightharpoonup\mu_f$ weakly. Since each cell boundary satisfies $|\partial Q|_\M=0$, we have $\mu_f(\partial Q)=0$, and by the Portmanteau Theorem,
\begin{equation} \label{eq:area_cutoff_1}
\mu_{a_\ell}(Q)\longrightarrow\mu_f(Q),
\qquad Q\in\mathcal{P}_\delta(\M).
\end{equation}
Therefore,
\[
	\|\bar{\mu}_{a_\ell}-\mathrm{C}_\delta\widehat J_f\|_{L^2(\M)}^2
	=\sum_{Q\in\mathcal{P}_\delta(\M)}
	\int_Q\left| \frac{\mu_{a_\ell}(Q)-\mu_f(Q)}{|Q|} \right|^2\,\d A_\M
\]
tends to $0$ as $\ell\to\infty$, which shows \eqref{eq:area_smooth_avg}.

Next, since $\chi_\varepsilon$ vanishes near $\partial Q$, we have $\sigma_{Q, \varepsilon} =J_{a_\ell}$ near the boundary and the cellwise densities glue smoothly across adjacent cells. Its total mass satisfies
\[
\int_Q\sigma_{Q,\varepsilon}\,\d A_\M
=\int_Q(1-\chi_\varepsilon)J_{a_\ell}\,\d A_\M
+c_{Q,\varepsilon}\int_Q\chi_\varepsilon\,\d A_\M
=\int_QJ_{a_\ell}\,\d A_\M
=\mu_{a_\ell}(Q).
\]
Also, since $0\leq\chi_\varepsilon\leq1$, the dominated convergence theorem gives
\[
\int_Q\chi_\varepsilon J_{a_\ell}\,\d A_\M \longrightarrow\int_QJ_{a_\ell}\,\d A_\M,
\qquad
\int_Q\chi_\varepsilon\,\d A_\M\longrightarrow |Q|.
\]
Hence,
\[
c_{Q,\varepsilon} \longrightarrow \left.\bar{\mu}_{a_\ell}\right|_Q, \qquad \sigma_{Q,\varepsilon}(x) \longrightarrow\left.\bar{\mu}_{a_\ell}\right|_Q
\]
for every $x \in \operatorname{int}Q$. Consequently, since $J_{a_\ell}$ is bounded on the compact surface $\M$, the dominated convergence theorem gives
\[
\int_Q\left|
\sigma_{Q,\varepsilon}
-\left.\bar{\mu}_{a_\ell}\right|_Q
\right|^2\,\d A_\M < \eta^2
\]
for sufficiently small $\varepsilon$.
\end{proof}

Now, with smooth $\sigma$, we may use a local Moser flow in each cell to realize the cell averages $\mathrm C_\delta\widehat J_f$ by a diffeomorphism close to $f$ (Figure~\ref{fig:cell_density}). We write
\[
\omega_f(\delta):=\sup_{d_\M(y,z)\leq\delta}|f(y)-f(z)|
\]
and provide the following lemma.

\begin{figure}[tb]
\centering
\begin{tikzpicture}
\begin{scope}[xshift=0.00cm]
\draw[gray!55, densely dotted, line width=0.4pt] (0.000,0) -- (0.000,2.640);
\draw[gray!55, densely dotted, line width=0.4pt] (1.300,0) -- (1.300,2.640);
\draw[gray!55, densely dotted, line width=0.4pt] (2.600,0) -- (2.600,2.640);
\draw[gray!55, densely dotted, line width=0.4pt] (3.900,0) -- (3.900,2.640);
\draw[gray!55, densely dotted, line width=0.4pt] (5.200,0) -- (5.200,2.640);
\draw[-{Latex[length=3pt]}, line width=0.4pt] (0,0) -- (5.450,0);
\draw[gray!60, line width=0.55pt] (0.00,1.84) -- (0.03,1.91) -- (0.05,1.96) -- (0.08,2.01) -- (0.10,2.05) -- (0.13,2.07) -- (0.16,2.08) -- (0.18,2.07) -- (0.21,2.05) -- (0.23,2.02) -- (0.26,2.00) -- (0.29,1.97) -- (0.31,1.95) -- (0.34,1.95) -- (0.36,1.96) -- (0.39,1.98) -- (0.42,2.02) -- (0.44,2.07) -- (0.47,2.12) -- (0.49,2.18) -- (0.52,2.23) -- (0.55,2.28) -- (0.57,2.31) -- (0.60,1.58) -- (0.62,1.58) -- (0.65,1.57) -- (0.68,1.54) -- (0.70,1.51) -- (0.73,1.47) -- (0.75,1.43) -- (0.78,1.40) -- (0.81,1.37) -- (0.83,1.36) -- (0.86,1.37) -- (0.88,1.38) -- (0.91,1.41) -- (0.94,1.45) -- (0.96,1.50) -- (0.99,1.54) -- (1.01,1.57) -- (1.04,1.60) -- (1.07,1.61) -- (1.09,1.60) -- (1.12,1.59) -- (1.14,1.55) -- (1.17,1.51) -- (1.20,1.45) -- (1.22,1.40) -- (1.25,2.10) -- (1.27,2.06) -- (1.30,2.03) -- (1.33,2.01) -- (1.35,2.01) -- (1.38,2.02) -- (1.40,2.04) -- (1.43,2.07) -- (1.46,2.10) -- (1.48,2.13) -- (1.51,2.14) -- (1.53,2.15) -- (1.56,2.14) -- (1.59,2.12) -- (1.61,2.08) -- (1.64,2.03) -- (1.66,1.97) -- (1.69,1.90) -- (1.72,1.84) -- (1.74,1.79) -- (1.77,1.74) -- (1.79,1.71) -- (1.82,1.69) -- (1.85,1.69) -- (1.87,1.69) -- (1.90,0.96) -- (1.92,0.99) -- (1.95,1.01) -- (1.98,1.03) -- (2.00,1.03) -- (2.03,1.02) -- (2.05,1.00) -- (2.08,0.97) -- (2.11,0.92) -- (2.13,0.86) -- (2.16,0.79) -- (2.18,0.72) -- (2.21,0.66) -- (2.24,0.61) -- (2.26,0.57) -- (2.29,0.54) -- (2.31,0.53) -- (2.34,0.53) -- (2.37,0.55) -- (2.39,0.57) -- (2.42,0.59) -- (2.44,0.61) -- (2.47,0.63) -- (2.50,0.63) -- (2.52,0.62) -- (2.55,1.34) -- (2.57,1.30) -- (2.60,1.25) -- (2.63,1.19) -- (2.65,1.13) -- (2.68,1.07) -- (2.70,1.02) -- (2.73,0.98) -- (2.76,0.95) -- (2.78,0.94) -- (2.81,0.95) -- (2.83,0.96) -- (2.86,0.99) -- (2.89,1.02) -- (2.91,1.06) -- (2.94,1.08) -- (2.96,1.10) -- (2.99,1.11) -- (3.02,1.10) -- (3.04,1.08) -- (3.07,1.04) -- (3.09,1.00) -- (3.12,0.95) -- (3.15,0.90) -- (3.17,0.85) -- (3.20,0.07) -- (3.22,0.05) -- (3.25,0.05) -- (3.28,0.06) -- (3.30,0.08) -- (3.33,0.12) -- (3.35,0.16) -- (3.38,0.21) -- (3.41,0.25) -- (3.43,0.29) -- (3.46,0.32) -- (3.48,0.33) -- (3.51,0.33) -- (3.54,0.31) -- (3.56,0.28) -- (3.59,0.25) -- (3.61,0.21) -- (3.64,0.18) -- (3.67,0.15) -- (3.69,0.14) -- (3.72,0.14) -- (3.74,0.16) -- (3.77,0.19) -- (3.80,0.23) -- (3.82,0.29) -- (3.85,1.09) -- (3.87,1.15) -- (3.90,1.21) -- (3.93,1.25) -- (3.95,1.28) -- (3.98,1.30) -- (4.00,1.30) -- (4.03,1.29) -- (4.06,1.27) -- (4.08,1.25) -- (4.11,1.22) -- (4.13,1.20) -- (4.16,1.19) -- (4.19,1.19) -- (4.21,1.21) -- (4.24,1.24) -- (4.26,1.29) -- (4.29,1.35) -- (4.32,1.41) -- (4.34,1.48) -- (4.37,1.54) -- (4.39,1.60) -- (4.42,1.64) -- (4.45,1.67) -- (4.47,1.68) -- (4.50,0.94) -- (4.52,0.92) -- (4.55,0.91) -- (4.58,0.88) -- (4.60,0.86) -- (4.63,0.85) -- (4.65,0.85) -- (4.68,0.86) -- (4.71,0.89) -- (4.73,0.93) -- (4.76,0.98) -- (4.78,1.04) -- (4.81,1.11) -- (4.84,1.17) -- (4.86,1.23) -- (4.89,1.27) -- (4.91,1.31) -- (4.94,1.33) -- (4.97,1.33) -- (4.99,1.32) -- (5.02,1.30) -- (5.04,1.27) -- (5.07,1.24) -- (5.10,1.22) -- (5.12,1.20) -- (5.15,1.95) -- (5.17,1.96) -- (5.20,1.99);
\draw[black, line width=0.9pt] (0.000,1.775) -- (1.300,1.775);
\draw[black, line width=0.9pt] (1.300,1.327) -- (2.600,1.327);
\draw[black, line width=0.9pt] (2.600,0.617) -- (3.900,0.617);
\draw[black, line width=0.9pt] (3.900,1.253) -- (5.200,1.253);
\node[font=\scriptsize, text=gray!50!black] at (0.650,-0.18) {$Q_1$};
\node[font=\scriptsize, text=gray!50!black] at (1.950,-0.18) {$Q_2$};
\node[font=\scriptsize, text=gray!50!black] at (3.250,-0.18) {$Q_3$};
\node[font=\scriptsize, text=gray!50!black] at (4.550,-0.18) {$Q_4$};
\node[font=\footnotesize] at (2.600,-0.62) {(a) $\widehat J_f$ (gray) and $\mathrm C_\delta\widehat J_f$ (black)};
\end{scope}
\begin{scope}[xshift=6.00cm]
\draw[gray!55, densely dotted, line width=0.4pt] (0.000,0) -- (0.000,2.640);
\draw[gray!55, densely dotted, line width=0.4pt] (1.300,0) -- (1.300,2.640);
\draw[gray!55, densely dotted, line width=0.4pt] (2.600,0) -- (2.600,2.640);
\draw[gray!55, densely dotted, line width=0.4pt] (3.900,0) -- (3.900,2.640);
\draw[gray!55, densely dotted, line width=0.4pt] (5.200,0) -- (5.200,2.640);
\draw[-{Latex[length=3pt]}, line width=0.4pt] (0,0) -- (5.450,0);
\draw[gray!60, line width=0.7pt] (0.00,1.60) -- (0.03,1.65) -- (0.05,1.70) -- (0.08,1.75) -- (0.10,1.80) -- (0.13,1.84) -- (0.16,1.88) -- (0.18,1.91) -- (0.21,1.94) -- (0.23,1.97) -- (0.26,1.99) -- (0.29,2.00) -- (0.31,2.01) -- (0.34,2.01) -- (0.36,2.01) -- (0.39,2.01) -- (0.42,2.00) -- (0.44,1.99) -- (0.47,1.97) -- (0.49,1.95) -- (0.52,1.93) -- (0.55,1.90) -- (0.57,1.88) -- (0.60,1.85) -- (0.62,1.82) -- (0.65,1.79) -- (0.68,1.77) -- (0.70,1.74) -- (0.73,1.72) -- (0.75,1.70) -- (0.78,1.68) -- (0.81,1.66) -- (0.83,1.65) -- (0.86,1.64) -- (0.88,1.63) -- (0.91,1.63) -- (0.94,1.63) -- (0.96,1.63) -- (0.99,1.64) -- (1.01,1.66) -- (1.04,1.67) -- (1.07,1.69) -- (1.09,1.71) -- (1.12,1.73) -- (1.14,1.76) -- (1.17,1.78) -- (1.20,1.81) -- (1.22,1.84) -- (1.25,1.86) -- (1.27,1.89) -- (1.30,1.91) -- (1.33,1.93) -- (1.35,1.95) -- (1.38,1.97) -- (1.40,1.98) -- (1.43,1.99) -- (1.46,2.00) -- (1.48,2.00) -- (1.51,1.99) -- (1.53,1.98) -- (1.56,1.97) -- (1.59,1.95) -- (1.61,1.92) -- (1.64,1.89) -- (1.66,1.86) -- (1.69,1.82) -- (1.72,1.78) -- (1.74,1.73) -- (1.77,1.68) -- (1.79,1.63) -- (1.82,1.57) -- (1.85,1.52) -- (1.87,1.46) -- (1.90,1.40) -- (1.92,1.35) -- (1.95,1.29) -- (1.98,1.23) -- (2.00,1.18) -- (2.03,1.13) -- (2.05,1.08) -- (2.08,1.03) -- (2.11,0.99) -- (2.13,0.96) -- (2.16,0.92) -- (2.18,0.89) -- (2.21,0.87) -- (2.24,0.85) -- (2.26,0.84) -- (2.29,0.83) -- (2.31,0.82) -- (2.34,0.82) -- (2.37,0.82) -- (2.39,0.83) -- (2.42,0.84) -- (2.44,0.85) -- (2.47,0.86) -- (2.50,0.88) -- (2.52,0.89) -- (2.55,0.91) -- (2.57,0.93) -- (2.60,0.94) -- (2.63,0.96) -- (2.65,0.97) -- (2.68,0.99) -- (2.70,1.00) -- (2.73,1.00) -- (2.76,1.01) -- (2.78,1.01) -- (2.81,1.00) -- (2.83,1.00) -- (2.86,0.99) -- (2.89,0.97) -- (2.91,0.95) -- (2.94,0.93) -- (2.96,0.90) -- (2.99,0.87) -- (3.02,0.84) -- (3.04,0.81) -- (3.07,0.77) -- (3.09,0.73) -- (3.12,0.69) -- (3.15,0.65) -- (3.17,0.61) -- (3.20,0.57) -- (3.22,0.54) -- (3.25,0.50) -- (3.28,0.47) -- (3.30,0.43) -- (3.33,0.41) -- (3.35,0.38) -- (3.38,0.36) -- (3.41,0.35) -- (3.43,0.34) -- (3.46,0.33) -- (3.48,0.33) -- (3.51,0.33) -- (3.54,0.34) -- (3.56,0.36) -- (3.59,0.38) -- (3.61,0.40) -- (3.64,0.43) -- (3.67,0.47) -- (3.69,0.51) -- (3.72,0.55) -- (3.74,0.59) -- (3.77,0.64) -- (3.80,0.69) -- (3.82,0.74) -- (3.85,0.79) -- (3.87,0.84) -- (3.90,0.89) -- (3.93,0.94) -- (3.95,0.98) -- (3.98,1.03) -- (4.00,1.07) -- (4.03,1.11) -- (4.06,1.15) -- (4.08,1.18) -- (4.11,1.21) -- (4.13,1.24) -- (4.16,1.26) -- (4.19,1.27) -- (4.21,1.29) -- (4.24,1.29) -- (4.26,1.30) -- (4.29,1.30) -- (4.32,1.30) -- (4.34,1.29) -- (4.37,1.28) -- (4.39,1.27) -- (4.42,1.26) -- (4.45,1.25) -- (4.47,1.24) -- (4.50,1.22) -- (4.52,1.21) -- (4.55,1.20) -- (4.58,1.19) -- (4.60,1.18) -- (4.63,1.17) -- (4.65,1.17) -- (4.68,1.17) -- (4.71,1.17) -- (4.73,1.18) -- (4.76,1.19) -- (4.78,1.21) -- (4.81,1.23) -- (4.84,1.25) -- (4.86,1.28) -- (4.89,1.32) -- (4.91,1.35) -- (4.94,1.39) -- (4.97,1.43) -- (4.99,1.48) -- (5.02,1.53) -- (5.04,1.58) -- (5.07,1.63) -- (5.10,1.68) -- (5.12,1.73) -- (5.15,1.78) -- (5.17,1.83) -- (5.20,1.88);
\draw[black, line width=0.9pt] (0.00,1.60) -- (0.03,1.65) -- (0.05,1.70) -- (0.08,1.75) -- (0.10,1.80) -- (0.13,1.84) -- (0.16,1.86) -- (0.18,1.86) -- (0.21,1.85) -- (0.23,1.84) -- (0.26,1.82) -- (0.29,1.81) -- (0.31,1.81) -- (0.34,1.81) -- (0.36,1.81) -- (0.39,1.81) -- (0.42,1.81) -- (0.44,1.81) -- (0.47,1.81) -- (0.49,1.81) -- (0.52,1.81) -- (0.55,1.81) -- (0.57,1.81) -- (0.60,1.81) -- (0.62,1.81) -- (0.65,1.81) -- (0.68,1.81) -- (0.70,1.81) -- (0.73,1.81) -- (0.75,1.81) -- (0.78,1.81) -- (0.81,1.81) -- (0.83,1.81) -- (0.86,1.81) -- (0.88,1.81) -- (0.91,1.81) -- (0.94,1.81) -- (0.96,1.81) -- (0.99,1.81) -- (1.01,1.81) -- (1.04,1.81) -- (1.07,1.79) -- (1.09,1.78) -- (1.12,1.77) -- (1.14,1.78) -- (1.17,1.79) -- (1.20,1.81) -- (1.22,1.84) -- (1.25,1.86) -- (1.27,1.89) -- (1.30,1.91);
\draw[black, line width=0.9pt] (1.30,1.91) -- (1.33,1.93) -- (1.35,1.95) -- (1.38,1.97) -- (1.40,1.96) -- (1.43,1.89) -- (1.46,1.79) -- (1.48,1.67) -- (1.51,1.55) -- (1.53,1.44) -- (1.56,1.37) -- (1.59,1.34) -- (1.61,1.34) -- (1.64,1.34) -- (1.66,1.34) -- (1.69,1.34) -- (1.72,1.34) -- (1.74,1.34) -- (1.77,1.34) -- (1.79,1.34) -- (1.82,1.34) -- (1.85,1.34) -- (1.87,1.34) -- (1.90,1.34) -- (1.92,1.34) -- (1.95,1.34) -- (1.98,1.34) -- (2.00,1.34) -- (2.03,1.34) -- (2.05,1.34) -- (2.08,1.34) -- (2.11,1.34) -- (2.13,1.34) -- (2.16,1.34) -- (2.18,1.34) -- (2.21,1.34) -- (2.24,1.34) -- (2.26,1.34) -- (2.29,1.34) -- (2.31,1.34) -- (2.34,1.32) -- (2.37,1.26) -- (2.39,1.18) -- (2.42,1.09) -- (2.44,1.00) -- (2.47,0.94) -- (2.50,0.90) -- (2.52,0.89) -- (2.55,0.91) -- (2.57,0.93) -- (2.60,0.94);
\draw[black, line width=0.9pt] (2.60,0.94) -- (2.63,0.96) -- (2.65,0.97) -- (2.68,0.99) -- (2.70,0.98) -- (2.73,0.94) -- (2.76,0.88) -- (2.78,0.81) -- (2.81,0.73) -- (2.83,0.67) -- (2.86,0.62) -- (2.89,0.61) -- (2.91,0.61) -- (2.94,0.61) -- (2.96,0.61) -- (2.99,0.61) -- (3.02,0.61) -- (3.04,0.61) -- (3.07,0.61) -- (3.09,0.61) -- (3.12,0.61) -- (3.15,0.61) -- (3.17,0.61) -- (3.20,0.61) -- (3.22,0.61) -- (3.25,0.61) -- (3.28,0.61) -- (3.30,0.61) -- (3.33,0.61) -- (3.35,0.61) -- (3.38,0.61) -- (3.41,0.61) -- (3.43,0.61) -- (3.46,0.61) -- (3.48,0.61) -- (3.51,0.61) -- (3.54,0.61) -- (3.56,0.61) -- (3.59,0.61) -- (3.61,0.61) -- (3.64,0.60) -- (3.67,0.58) -- (3.69,0.57) -- (3.72,0.58) -- (3.74,0.60) -- (3.77,0.63) -- (3.80,0.68) -- (3.82,0.74) -- (3.85,0.79) -- (3.87,0.84) -- (3.90,0.89);
\draw[black, line width=0.9pt] (3.90,0.89) -- (3.93,0.94) -- (3.95,0.98) -- (3.98,1.03) -- (4.00,1.08) -- (4.03,1.14) -- (4.06,1.18) -- (4.08,1.22) -- (4.11,1.25) -- (4.13,1.26) -- (4.16,1.26) -- (4.19,1.26) -- (4.21,1.26) -- (4.24,1.26) -- (4.26,1.26) -- (4.29,1.26) -- (4.32,1.26) -- (4.34,1.26) -- (4.37,1.26) -- (4.39,1.26) -- (4.42,1.26) -- (4.45,1.26) -- (4.47,1.26) -- (4.50,1.26) -- (4.52,1.26) -- (4.55,1.26) -- (4.58,1.26) -- (4.60,1.26) -- (4.63,1.26) -- (4.65,1.26) -- (4.68,1.26) -- (4.71,1.26) -- (4.73,1.26) -- (4.76,1.26) -- (4.78,1.26) -- (4.81,1.26) -- (4.84,1.26) -- (4.86,1.26) -- (4.89,1.26) -- (4.91,1.26) -- (4.94,1.27) -- (4.97,1.29) -- (4.99,1.33) -- (5.02,1.39) -- (5.04,1.48) -- (5.07,1.57) -- (5.10,1.66) -- (5.12,1.73) -- (5.15,1.78) -- (5.17,1.83) -- (5.20,1.88);
\node[font=\scriptsize, text=gray!50!black] at (0.650,-0.18) {$Q_1$};
\node[font=\scriptsize, text=gray!50!black] at (1.950,-0.18) {$Q_2$};
\node[font=\scriptsize, text=gray!50!black] at (3.250,-0.18) {$Q_3$};
\node[font=\scriptsize, text=gray!50!black] at (4.550,-0.18) {$Q_4$};
\node[font=\footnotesize] at (2.600,-0.62) {(b) $J_{a_\ell}$ (gray) and $\sigma$ (black)};
\end{scope}
\end{tikzpicture}
\caption{The densities of Lemmas~\ref{lem:cell_decompose}--\ref{lem:cell_realize} on four cells: (a) $\widehat J_f$ and its cell averages; (b) $J_{a_\ell}$ and $\sigma$, which keeps the cell masses and equals $J_{a_\ell}$ near the cell boundaries.}
\label{fig:cell_density}
\end{figure}

\begin{lemma}\label{lem:cell_realize}
For each $\delta>0$, let $\mathcal{P}_\delta(\M)$ be a finite cell decomposition of $\M$ whose maximum diameter is less than $\delta$. For $\eta > 0$, there exists $g\in\mathrm{Diff}^+(\M,\N)$, smooth up to the boundary, such that
\[
\|J_g-\mathrm C_\delta\widehat J_f\|_{L^2(\M)}<\eta,
\qquad
\|g-f\|_{L^\infty(\M)}<\eta+\omega_f(\delta).
\]
Moreover, $|g(x)-f(x)|<\eta$ for every $x$ on the boundary of a cell.
\end{lemma}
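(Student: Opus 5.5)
We plan to compose the smooth approximation $a_\ell$ from \eqref{eq:surface_smoothing} with a cellwise Moser correction on the source side. First fix $\eta>0$. By Lemma~\ref{lem:area_cutoff}(1) we choose $\ell$ so large that $\|\bar\mu_{a_\ell}-\mathrm C_\delta\widehat J_f\|_{L^2(\M)}<\eta/2$ and $\|a_\ell-f\|_{L^\infty(\M)}<\eta$. Then, by Lemma~\ref{lem:area_cutoff}(2) applied on each of the finitely many cells $Q$, we choose $\varepsilon$ so small that $\|\sigma_{Q,\varepsilon}-\bar\mu_{a_\ell}|_Q\|_{L^2(Q)}<\eta/(2\sqrt{\#\mathcal P_\delta})$. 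Gluing the cellwise densities gives a smooth positive density $\sigma$ on $\M$ with $\sigma=J_{a_\ell}$ near every cell boundary, $\int_Q\sigma\,\d A_\M=\int_Q J_{a_\ell}\,\d A_\M$ for each $Q$, and $\|\sigma-\mathrm C_\delta\widehat J_f\|_{L^2(\M)}<\eta$. Positivity of $\sigma$ is automatic, since $J_{a_\ell}>0$ on the compact $\M$ and $c_{Q,\varepsilon}>0$, so $\sigma$ is a convex combination of positive quantities.

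Next, on each cell we apply Moser's theorem in Banyaga's version for manifolds with boundary~\cite{Mose65,Bany1974}, or the corner version~\cite{Bruv18} if the cells have corners. The two area forms $\sigma\,\d A_\M$ and $J_{a_\ell}\,\d A_\M$ on $Q$ have equal total mass and agree on a neighborhood of $\partial Q$. Hence there is a diffeomorphism $\phi_Q:Q\to Q$ with $\phi_Q^*(J_{a_\ell}\,\d A_\M)=\sigma\,\d A_\M$. Since the forms coincide near $\partial Q$, the Moser vector field can be taken compactly supported in $\operatorname{int}Q$, so $\phi_Q$ is the identity near $\partial Q$. These cellwise maps therefore glue to a smooth orientation-preserving diffeomorphism $\phi$ of $\M$ that preserves each cell. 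Set $g=a_\ell\circ\phi$. Then $g^*\d A_\N=\phi^*(J_{a_\ell}\d A_\M)=\sigma\,\d A_\M$, so $J_g=\sigma$ and $\|J_g-\mathrm C_\delta\widehat J_f\|_{L^2}<\eta$.

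For the displacement, note that $\phi$ maps each cell to itself, so $d_\M(\phi(x),x)\le\delta$. Therefore
\[
|g(x)-f(x)|\le|a_\ell(\phi(x))-f(\phi(x))|+|f(\phi(x))-f(x)|<\eta+\omega_f(\delta).
\]
On cell boundaries $\phi=\mathrm{id}$, so $|g(x)-f(x)|=|a_\ell(x)-f(x)|<\eta$.

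The main obstacle is the Moser step itself: making sure the correction is compactly supported in each cell and hence glues smoothly, and that it can be done on cells with corners. We handle this by solving $\d\alpha=(J_{a_\ell}-\sigma)\,\d A_\M$ with $\alpha$ compactly supported in $\operatorname{int}Q$. This is possible because the right-hand side is compactly supported in the open disk $\operatorname{int}Q$ with zero integral, so its class in compactly supported cohomology $H^2_c$ of the open disk vanishes. The linear interpolation $\omega_t=(1-t)\sigma\,\d A_\M+tJ_{a_\ell}\,\d A_\M$ stays nondegenerate by positivity, and the resulting flow is compactly supported. Here we assume each cell is a closed topological disk; if the decomposition allows otherwise, we refine it first.
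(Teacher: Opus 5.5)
Your proposal is correct and follows the paper's proof essentially step for step. You choose $\ell$ and the cutoffs via Lemma~\ref{lem:area_cutoff}, glue the cellwise densities into $\sigma$, apply a cellwise Moser correction $\phi$ that is the identity near each $\partial Q$, set $g=a_\ell\circ\phi$, and use $\phi(Q)=Q$ for the displacement bound. You also supply details the paper leaves to the citation: that $\sigma>0$, and that the Moser primitive can be taken compactly supported in $\operatorname{int}Q$, which needs only connectedness of $\operatorname{int}Q$ and no regularity of $\partial Q$.

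Drop the fallback of refining the decomposition first. Refining would change $\mathrm C_\delta\widehat J_f$, so it would not prove the stated estimate. It is also unnecessary, since your compactly supported cohomology argument works on any cell with connected interior.
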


\begin{proof}
Write $\mathcal{P}_\delta(\M)=\{Q_1,\ldots,Q_N\}$. By Lemma~\ref{lem:area_cutoff} and \eqref{eq:surface_smoothing}, there exist an index $\ell$ with $\|a_\ell-f\|_{L^\infty(\M)}<\eta$ and $\varepsilon_k$ such that
\begin{equation} \label{eq:cell_realize_1}
\|\bar{\mu}_{a_\ell}-\mathrm{C}_\delta\widehat J_f\|_{L^2(\M)}<\frac{\eta}{2},\qquad
\|\sigma_{Q_k,\varepsilon_k}-\bar{\mu}_{a_\ell}\|_{L^2(Q_k)}
<\frac{\eta}{2\sqrt{N}}.
\end{equation}
Define a global density $\sigma$ by $\sigma|_{Q_k} = \sigma_{Q_k,\varepsilon_k}$. Since each $\sigma_{Q_k,\varepsilon_k}$ is smooth and agrees with $J_{a_\ell}$ near the cell boundary, $\sigma$ is smooth globally on $\M$ and
\begin{equation} \label{eq:cell_realize_2}
\|\sigma-\bar{\mu}_{a_\ell}\|_{L^2(\M)}^2
=\sum_{k=1}^N
\|\sigma_{Q_k,\varepsilon_k}-\bar{\mu}_{a_\ell}\|_{L^2(Q_k)}^2
<\frac{\eta^2}{4}.
\end{equation}

In addition, by \eqref{eq:area_cutoff}, $\sigma$ has the same total area as $J_{a_\ell}$  on each ${Q_k}$ and agrees with it near $\partial Q_k$.  Moser and Banyaga's argument \cite{Mose65, Bany1974} gives a diffeomorphism $\phi_Q:Q\to Q$, equal to the identity near
$\partial Q$, such that
\[
\phi_Q^*(J_{a_\ell}\,\d A_\M)=\sigma\,\d A_\M
\]
and the maps $\phi_Q$ glue to a global diffeomorphism $\phi:\M\to\M$ satisfying $\phi|_Q=\phi_Q$ for every $Q\in\mathcal{P}_\delta(\M)$.

Therefore, by defining $g:=a_\ell\circ\phi$, we obtain
\[
g^*\d A_\N
=\phi^*(a_\ell^*\d A_\N)
=\phi^*(J_{a_\ell}\,\d A_\M)
=\sigma\,\d A_\M,
\]
so $J_g = \sigma$, and by \eqref{eq:cell_realize_1} and \eqref{eq:cell_realize_2}, we obtain
\[
\|J_g-\mathrm C_\delta\widehat J_f\|_{L^2(\M)}
\leq\|\sigma-\bar{\mu}_{a_\ell}\|_{L^2(\M)}
+\|\bar{\mu}_{a_\ell}-\mathrm{C}_\delta\widehat J_f\|_{L^2(\M)}<\eta .
\]
Finally, if $x\in Q$, then $x,\phi(x)\in Q$ and hence $d_\M(x,\phi(x))<\delta$. Thus \eqref{eq:surface_smoothing} gives
\[
|g(x)-f(x)| \leq|a_\ell(\phi(x))-f(\phi(x))| +|f(\phi(x))-f(x)|
<\eta+\omega_f(\delta).
\]
If, in addition, $x\in\partial Q$, then $\phi(x)=x$, and hence $|g(x)-f(x)|=|a_\ell(x)-f(x)|<\eta$.
\end{proof}

Letting the cells shrink yields the main approximation result.

\begin{theorem} \label{thm:energy_dense}
For every $f\in\mathrm{Homeo}^+(\M,\N)$ with $\widebar{\E_\rS}(f)<\infty$, there exists a sequence $\{g_j\}_{j=1}^\infty\subset\mathrm{Diff}^+(\M,\N)$, smooth up to the
boundary, such that
\begin{equation} \label{eq:energy_dense}
\|g_j-f\|_{L^\infty(\M)}\longrightarrow0,
\qquad
\|J_{g_j}-\widehat J_f\|_{L^2(\M)}\longrightarrow0.
\end{equation}
In particular, $\E_\rS(g_j)\to\widebar{\E_\rS}(f)$.
\end{theorem}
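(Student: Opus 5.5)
We obtain the sequence $\{g_j\}$ by a diagonal choice of parameters in Lemma~\ref{lem:cell_realize}, using Lemma~\ref{lem:cell_decompose} to control the error between the cell averages and the density.

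Fix a sequence $\delta_j\to0$ and, for each $j$, a finite cell decomposition $\mathcal{P}_{\delta_j}(\M)$ with maximal cell diameter less than $\delta_j$; geodesic triangulations of $\M$ with boundaries of measure zero will do. Set $\eta_j=1/j$. Apply Lemma~\ref{lem:cell_realize} with $\delta=\delta_j$ and $\eta=\eta_j$. This gives $g_j\in\mathrm{Diff}^+(\M,\N)$, smooth up to the boundary, such that
\[
\|J_{g_j}-\mathrm C_{\delta_j}\widehat J_f\|_{L^2(\M)}<\eta_j,
\qquad
\|g_j-f\|_{L^\infty(\M)}<\eta_j+\omega_f(\delta_j).
\]

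For the uniform estimate, $f$ is continuous on the compact surface $\M$, hence uniformly continuous. Therefore $\omega_f(\delta_j)\to0$, and so $\|g_j-f\|_{L^\infty(\M)}\to0$.

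For the Jacobian estimate, the triangle inequality gives
\[
\|J_{g_j}-\widehat J_f\|_{L^2(\M)}\leq \eta_j+\|\mathrm C_{\delta_j}\widehat J_f-\widehat J_f\|_{L^2(\M)}.
\]
The second term tends to zero by Lemma~\ref{lem:cell_decompose}, which holds for any choice of decompositions. Hence $J_{g_j}\to\widehat J_f$ in $L^2(\M)$.

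It remains to show the energy convergence. Since $g_j$ is a diffeomorphism, $\mu_{g_j}=J_{g_j}\,\d A_\M$, so $\widehat J_{g_j}=J_{g_j}$ and $\E_\rS(g_j)=\|J_{g_j}\|_{L^2(\M)}^2$. Convergence in norm gives $\|J_{g_j}\|_{L^2}^2\to\|\widehat J_f\|_{L^2}^2=\widebar{\E_\rS}(f)$.

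The main obstacle is already handled inside Lemma~\ref{lem:cell_realize}: one must realize the cell averages by a genuine diffeomorphism while keeping it uniformly close to $f$. The only point to check here is that the two parameters in that lemma can be sent to zero simultaneously. This works because its two error bounds separate: $\eta$ is free for each fixed $\delta$, and $\omega_f(\delta)$ is independent of $\eta$. Any order of limits therefore succeeds.
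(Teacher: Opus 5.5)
Your proposal is correct and follows the paper's proof essentially verbatim. The only difference is that the paper applies Lemma~\ref{lem:cell_realize} with $\eta=\delta_j$ rather than $\eta=1/j$. It then concludes by the same triangle inequality with Lemma~\ref{lem:cell_decompose}, the limit $\omega_f(\delta_j)\to0$, and norm convergence of $J_{g_j}$ in $L^2(\M)$.
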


\begin{proof}
Choose $\delta_j\downarrow0$ and a sequence of cell decompositions $\mathcal P_{\delta_j}(\M)$ whose maximum diameter is less than $\delta_j$. Lemma~\ref{lem:cell_realize} with $\eta=\delta_j$ gives $g_j\in\mathrm{Diff}^+(\M,\N)$ with
\begin{equation} \label{eq:energy_dense_1}
\|J_{g_j}-\mathrm C_{\delta_j}\widehat J_f\|_{L^2(\M)}<\delta_j,
\qquad
\|g_j-f\|_{L^\infty(\M)}<\delta_j+\omega_f(\delta_j).
\end{equation}
By the triangle inequality, we have
\[
\|J_{g_j}-\widehat J_f\|_{L^2(\M)}
\leq \|J_{g_j} - \mathrm{C}_{\delta_j}\widehat J_f\|_{L^2(\M)}  +\|\mathrm{C}_{\delta_j}\widehat J_f-\widehat J_f\|_{L^2(\M)} \longrightarrow 0
\]
where the first and second terms vanish by \eqref{eq:energy_dense_1} and Lemma~\ref{lem:cell_decompose}, respectively.
Moreover, since $\omega_f(\delta_j) \to 0$, $\|g_j-f\|_{L^\infty(\M)} \to 0$.
Note that the convergence of the energies follows from the convergence $J_{g_j}\to\widehat J_f$ in $L^2(\M)$.
\end{proof}

Together with the lower semicontinuity in Lemma~\ref{lem:weak_conv_area}, Theorem~\ref{thm:energy_dense} identifies $\widebar{\E_\rS}$ as the relaxation of $\E_\rS$.

\begin{corollary}\label{cor:energy_relax}
For every $f\in\mathrm{Homeo}^+(\M,\N)$,
\begin{equation} \label{eq:energy_relax}
\widebar{\E_\rS}(f) =
\inf \left\{ \liminf_{\ell\to\infty}\E_\rS(g_\ell)
\,\middle|\,
g_\ell\in\mathrm{Diff}^+(\M,\N), \quad d_H(g_\ell,f)\to0 \right\}.
\end{equation}
\end{corollary}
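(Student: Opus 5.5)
We prove the two inequalities separately, writing $R(f)$ for the infimum on the right-hand side of \eqref{eq:energy_relax}. The lower bound $\widebar{\E_\rS}(f)\le R(f)$ comes from Lemma~\ref{lem:weak_conv_area}. The upper bound $R(f)\le\widebar{\E_\rS}(f)$ comes from the recovery sequence of Theorem~\ref{thm:energy_dense}. The only preliminary fact is that $\widebar{\E_\rS}$ extends $\E_\rS$: for $g\in\mathrm{Diff}^+(\M,\N)$ we have $\widebar{\E_\rS}(g)=\E_\rS(g)$.

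For this fact, the change of variables formula for the diffeomorphism $g$ gives
\[
\mu_g(B)=|g(B)|_\N=\int_B J_g\,\d A_\M,\qquad B\in\mathcal B(\M).
\]
Hence $\mu_g=J_g\,\d A_\M$. Since $J_g$ is continuous on the compact surface $\M$, it lies in $L^2(\M)$, so $\widehat J_g=J_g$ almost everywhere and $\widebar{\E_\rS}(g)=\int_\M J_g^2\,\d A_\M=\E_\rS(g)$.

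\emph{Lower bound.} Take any sequence $g_\ell\in\mathrm{Diff}^+(\M,\N)$ with $d_H(g_\ell,f)\to0$. Each $g_\ell$ lies in $\mathrm{Homeo}^+(\M,\N)$, so Lemma~\ref{lem:weak_conv_area} applies and, with the extension property, gives
\[
\widebar{\E_\rS}(f)\le\liminf_{\ell\to\infty}\widebar{\E_\rS}(g_\ell)=\liminf_{\ell\to\infty}\E_\rS(g_\ell).
\]
Taking the infimum over all such sequences yields $\widebar{\E_\rS}(f)\le R(f)$. This also settles the case $\widebar{\E_\rS}(f)=+\infty$, since then $R(f)=+\infty$; this includes the case where no approximating sequence exists and the infimum is $+\infty$ by convention. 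In fact such sequences always exist by \eqref{eq:surface_smoothing}, so the infimum is never over an empty set.

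\emph{Upper bound.} Assume $\widebar{\E_\rS}(f)<\infty$. Theorem~\ref{thm:energy_dense} gives $g_j\in\mathrm{Diff}^+(\M,\N)$ with $d_H(g_j,f)=\|g_j-f\|_{L^\infty(\M)}\to0$ and $\E_\rS(g_j)\to\widebar{\E_\rS}(f)$. This sequence is admissible in the infimum, so $R(f)\le\widebar{\E_\rS}(f)$, and hence the infimum is attained.

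No step here is a real obstacle. The substantive work is already contained in Theorem~\ref{thm:energy_dense} and Lemma~\ref{lem:weak_conv_area}. The only points needing care are that $d_H$-convergence is exactly the uniform convergence used in those results, and the identification $\widehat J_g=J_g$ for diffeomorphisms.
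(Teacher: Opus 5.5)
Your proposal is correct and follows the paper's proof: the lower bound is the lower semicontinuity of Lemma~\ref{lem:weak_conv_area} applied to an arbitrary uniformly convergent sequence of diffeomorphisms, and the upper bound comes from the recovery sequence of Theorem~\ref{thm:energy_dense}. Your explicit check that $\widebar{\E_\rS}(g)=\E_\rS(g)$ for diffeomorphisms, via $\mu_g=J_g\,\d A_\M$, justifies the equality that the paper uses without comment in its first display.
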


\begin{proof}
Let $\{g_\ell\}\subset\mathrm{Diff}^+(\M,\N)$ satisfy $d_H(g_\ell,f)\to0$. The lower semicontinuity in Lemma~\ref{lem:weak_conv_area} gives
\begin{equation} \label{eq:energy_relax_1}
\widebar{\E_\rS}(f)
\leq\liminf_{\ell\to\infty}\widebar{\E_\rS}(g_\ell)
=\liminf_{\ell\to\infty}\E_\rS(g_\ell).
\end{equation}
For the reverse inequality, if $\widebar{\E_\rS}(f)=+\infty$, \eqref{eq:energy_relax_1} forces the right-hand side to be $+\infty$. Suppose $\widebar{\E_\rS}(f)<+\infty$. By Theorem~\ref{thm:energy_dense}, there exists a sequence $\{g_\ell\} \subset \mathrm{Diff}^+(\M,\N)$ such that
\[
d_H(g_\ell,f)\to0,
\qquad
\|J_{g_\ell}-\widehat J_f\|_{L^2(\M)}\to0.
\]
Consequently,
\[
\liminf_{\ell \to \infty} \E_\rS(g_\ell) = \lim_{\ell \to \infty} \E_\rS(g_\ell) = \widebar{\E_\rS}(f),
\]
which gives the reverse inequality.
\end{proof}

For diffeomorphisms, a critical point of $\E_\rS$ under boundary-fixing variations is area-preserving \cite[Theorem~2]{LiKY26}; in particular, every local minimizer is a global minimizer. This relaxation of the stretch energy keeps the same property, as shown in the following.

\begin{lemma} \label{lem:local_global}
Let $f\in\mathrm{Homeo}^+(\M,\N)$, and suppose that there exists $\rho>0$ such that
\begin{equation} \label{eq:local_global}
\widebar{\E_\rS}(f)\leq\widebar{\E_\rS}(v)\qquad\text{for all }v\in\mathrm{Homeo}^+(\M,\N)\text{ with }d_H(v,f)<\rho.
\end{equation}
Then
\[
\mu_f=\dfrac{|\N|}{|\M|}\,\d A_\M.
\]
In particular, if $|\M| = |\N|$, then $f$ preserves area and $\widebar{\E_\rS}(f)=|\M|$.
\end{lemma}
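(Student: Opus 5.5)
The plan is to argue by contradiction with a local perturbation. Suppose $\mu_f\neq c\,\d A_\M$, where $c=|\N|/|\M|$. First I reduce to the case $\widebar{\E_\rS}(f)<\infty$. To do this I would construct a competitor $v$ with $d_H(v,f)<\rho$ and finite energy. Take $u=f\circ\psi$, where $\psi$ is the area-preserving diffeomorphism of $\M$ that pulls $\widehat J\,\d A_\M$ back appropriately. More simply, I would apply Theorem~\ref{thm:energy_dense} to an area-preserving homeomorphism $w$ close to $f$, obtained by composing $f$ with a homeomorphism of $\M$ that moves points less than $\delta$ and pushes $\mu_f$ to a measure with bounded density. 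This is done cellwise, as in Lemma~\ref{lem:cell_realize}, using that each cell $Q$ with $\omega_f(\delta)<\rho$ can be remapped inside itself. Then $\widebar{\E_\rS}(f)\le\widebar{\E_\rS}(v)<\infty$, so $\mu_f=\widehat J_f\,\d A_\M$ with $\widehat J_f\in L^2$.

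Next, assuming finite energy, I construct a strictly better competitor inside the $\rho$-ball. Choose a cell decomposition $\mathcal P_\delta(\M)$ with $\delta$ small enough that $\omega_f(\delta)<\rho/2$. On each cell $Q$, consider the homeomorphism $v=f\circ\psi$, where $\psi:\M\to\M$ preserves each cell, fixes $\partial Q$, and satisfies $\psi_\#^{-1}$-type transport so that $\mu_v|_Q=\frac{\mu_f(Q)}{|Q|}\d A_\M$. The identity $\mu_{f\circ\psi}=(\psi^{-1})_\#\mu_f$ reduces this to finding a homeomorphism of $Q$, fixing the boundary, that carries the measure $\widehat J_f\,\d A_\M|_Q$ to the uniform measure of the same mass. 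This is a Moser/Banyaga-type statement for measures, and Oxtoby--Ulam/von Neumann-type results give it for nonatomic measures of full support vanishing on the boundary; full support holds because $f$ is a homeomorphism onto open sets. Since $\psi$ moves points within cells, $d_H(v,f)\le\omega_f(\delta)<\rho$. By the Cauchy--Schwarz step in Lemma~\ref{lem:cell_decompose}, $\widebar{\E_\rS}(v)=\|\mathrm C_\delta\widehat J_f\|_{L^2}^2\le\|\widehat J_f\|_{L^2}^2$, with equality iff $\widehat J_f$ is constant a.e. on each cell. So local minimality forces $\widehat J_f$ to be cellwise constant for every fine decomposition. Letting $\delta\to0$ with Lemma~\ref{lem:cell_decompose}, I expect $\widehat J_f=\mathrm C_\delta\widehat J_f$ for all small $\delta$.

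This alone does not give global constancy, since I also need to compare neighbouring cells. The fix is to use a single cell per comparison. Take two adjacent cells $Q_1,Q_2$ whose union $U$ is a disc of diameter less than $2\delta$, and apply the same measure-transport homeomorphism on $U$, redistributing to the uniform density $\mu_f(U)/|U|$. This again stays in the ball and strictly decreases energy unless the two averages agree. Connectedness of $\M$ then gives a constant density, and that constant equals $\mu_f(\M)/|\M|=|\N|/|\M|$. When $|\M|=|\N|$, \eqref{eq:energy_gap} gives $\widebar{\E_\rS}(f)=|\M|$.

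The main obstacle is the measure-transport step: producing a boundary-fixing homeomorphism of a disc carrying one nonatomic full-support measure (with $L^1$ density) to another of equal mass. The paper's tools are smooth Moser flows, so I would try first to use the Oxtoby--Ulam theorem directly. The fallback is to approximate, via Theorem~\ref{thm:energy_dense}, and pass to a limit using lower semicontinuity (Lemma~\ref{lem:weak_conv_area}). The fallback would only yield near-competitors, with energy $\le\|\mathrm C_\delta\widehat J_f\|^2+\varepsilon$, but this suffices for a strict-inequality contradiction. The first finite-energy reduction uses the same tool.
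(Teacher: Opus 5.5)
Your main argument is correct but takes a different route from the paper at the key step. The paper never transports $\mu_f$ exactly. It invokes Lemma~\ref{lem:cell_realize} (smoothing plus a cellwise Moser correction), which yields diffeomorphisms $g$ with $d_H(g,f)<\eta+\omega_f(\delta)<\rho$ and $\|J_g-\mathrm C_\delta\widehat J_f\|_{L^2(\M)}<\eta$. Local minimality then gives $\|\widehat J_f\|_{L^2(\M)}\le\|\mathrm C_\delta\widehat J_f\|_{L^2(\M)}+\eta$. After $\eta\to0$, the orthogonal splitting $\|\widehat J_f\|^2=\|\mathrm C_\delta\widehat J_f\|^2+\|\widehat J_f-\mathrm C_\delta\widehat J_f\|^2$ forces $\widehat J_f=\mathrm C_\delta\widehat J_f$. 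This is exactly your fallback, and it needs no lower semicontinuity.

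Your primary route instead uses the boundary-fixing Oxtoby--Ulam theorem on each cell to get an exact competitor $v=f\circ\psi$ with $\widebar{\E_\rS}(v)=\|\mathrm C_\delta\widehat J_f\|_{L^2(\M)}^2$. This is valid. Once $\mu_f\ll\d A_\M$, its hypotheses hold: $\mu_f$ is nonatomic, positive on open sets, and null on $\partial Q$. Transferring from the square to a cell is routine, and a boundary-fixing homeomorphism of a disc preserves orientation. Your route buys an exact competitor with no limit in $\eta$ and no smoothing. The price is importing an external topological theorem instead of reusing the machinery of Section~\ref{sec:density}.

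For passing from cellwise to global constancy, the paper observes that the claim holds for \emph{every} decomposition of mesh less than $\delta$. Hence every small disc is a cell, and $\widehat J_f$ is locally constant on $\operatorname{int}\M$. Your merging of adjacent cells is the same idea and works once $\omega_f(2\delta)<\rho$ (shrink $\delta$ if needed). Your remark about letting $\delta\to0$ is superfluous, since the equality case of Cauchy--Schwarz gives $\widehat J_f=\mathrm C_\delta\widehat J_f$ directly.

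The finite-energy reduction is the weak part of your write-up, and it is much easier than you make it. Any diffeomorphism $v$ with $d_H(v,f)<\rho$, which exists by Moise--Munkres smoothing as in \eqref{eq:surface_smoothing}, has $\E_\rS(v)<\infty$ because $J_v$ is continuous on the compact surface $\M$. So $\widebar{\E_\rS}(f)\le\E_\rS(v)<\infty$; this is the paper's first line.

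Each of your alternatives has a problem:
\begin{itemize}
\item The first presupposes the density $\widehat J$ whose existence is being proved; moreover, composing with an area-preserving $\psi$ cannot change whether $\mu_f$ is absolutely continuous.
\item The appeal to Theorem~\ref{thm:energy_dense} is circular, since that theorem assumes finite energy of the map being approximated.
\item Running Oxtoby--Ulam before absolute continuity is known requires cells with $\mu_f(\partial Q)=|f(\partial Q)|_\N=0$. This is not automatic, because a homeomorphism can carry an edge onto an arc of positive area. It can be arranged by choosing the cell boundaries suitably, but you do not address it.
\end{itemize}
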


\begin{proof}
Similar to \eqref{eq:surface_smoothing}, we use PL approximation~\cite{Mois77} and the two-dimensional version of the smoothing theorem \cite[Theorem 6.3 and Section 9]{Munk60} to choose a diffeomorphism $v$ such that $d_H(v,f)<\rho$. The local minimality of $f$ implies $\widebar{\E_\rS}(f)\leq \E_\rS(v) < + \infty$, and hence $\mu_f=\widehat J_f\,\d A_\M$ with $\widehat J_f\in L^2(\M)$.

Fix $\delta>0$ with $\omega_f(\delta)<\rho/2$, and let $\mathcal P_\delta(\M)$ be a finite cell decomposition of $\M$ whose maximum diameter is less than $\delta$. We claim that
\begin{equation} \label{eq:local_global_1}
	\widehat J_f =   \mathrm C_\delta\widehat J_f\qquad\text{almost everywhere}.
\end{equation}
Since it holds for every decomposition whose maximum cell diameter is less than $\delta$, $\widehat J_f$ is almost everywhere constant on every cell of such a decomposition in $\operatorname{int}\M$, and hence near every point of $\operatorname{int}\M$. By the connectedness of $\M$ and $\mu_f|_{\partial \M} = 0$, $\widehat J_f=c$ almost everywhere on $\M$ for a constant $c$ with
\[
c\,|\M|=\mu_f(\M)=|f(\M)|_\N=|\N|,
\]
and $c = 1$ if $|\M|=|\N|$.

We now prove the claim. Lemma~\ref{lem:cell_realize} with $\eta\in(0,\rho/2)$ gives a diffeomorphism $g$ such that
\begin{equation} \label{eq:local_global_2}
d_H(g,f)<\eta+\omega_f(\delta)<\rho,
\qquad
\|J_g-\mathrm C_\delta\widehat J_f\|_{L^2(\M)}<\eta.
\end{equation}
Since $d_H(g,f)<\rho$, the local minimality \eqref{eq:local_global} and \eqref{eq:local_global_2} give
\[
\| \widehat J_f \|_{L^2(\M)} = \widebar{\E_\rS}(f)^{1/2}
\leq \widebar{\E_\rS}(g)^{1/2} = \| J_g\|_{L^2(\M)}
\leq \|\mathrm C_\delta\widehat J_f\|_{L^2(\M)} + \eta.
\]
Letting $\eta \to 0$ gives
\begin{equation} \label{eq:local_global_3}
\| \widehat J_f \|_{L^2(\M)} \leq \|\mathrm C_\delta\widehat J_f\|_{L^2(\M)} .
\end{equation}
In addition, we expand:
\begin{align*}
\| \widehat J_f\|^2_{L^2(\M)} &= \| \mathrm C_\delta\widehat J_f + (\widehat J_f- \mathrm C_\delta\widehat J_f)\|^2_{L^2(\M)} \\
&= \| \mathrm C_\delta\widehat J_f\|^2_{L^2(\M)} + \|\widehat J_f- \mathrm C_\delta\widehat J_f\|^2_{L^2(\M)} + 2 \langle\mathrm C_\delta\widehat J_f,\; \widehat J_f- \mathrm C_\delta\widehat J_f \rangle_{L^2(\M)}.
\end{align*}
On each cell $Q$, the function $\mathrm{C}_\delta \widehat J_f$ is the constant average of $\widehat J_f$, so
\[
	\int_Q (\widehat J_f - \mathrm{C}_\delta \widehat J_f) \, \mathrm{C}_\delta \widehat J_f \, \d A_\M =
	\mathrm{C}_\delta \widehat J_f\big|_Q \int_Q \big(\widehat J_f  - \mathrm{C}_\delta \widehat J_f\big) \, \d A_\M = 0.
\]
Consequently, the cross term vanishes:
\[
\begin{aligned}
	\langle\mathrm C_\delta\widehat J_f,\; \widehat J_f- \mathrm C_\delta\widehat J_f \rangle_{L^2(\M)} &=
	\sum_{Q\in\mathcal P_\delta(\M)}\int_Q (\widehat J_f - \mathrm{C}_\delta \widehat J_f) \, \mathrm{C}_\delta \widehat J_f \, \d A_\M= 0,
\end{aligned}
\]
and we obtain
\begin{equation} \label{eq:local_global_4}
\| \widehat J_f\|^2_{L^2(\M)} = \| \mathrm C_\delta\widehat J_f\|^2_{L^2(\M)} + \|\widehat J_f- \mathrm C_\delta\widehat J_f\|^2_{L^2(\M)}.
\end{equation}
Substituting \eqref{eq:local_global_4} into the left-hand side of \eqref{eq:local_global_3} after squaring, we obtain
\[
\| \mathrm C_\delta\widehat J_f\|^2_{L^2(\M)} + \|\widehat J_f- \mathrm C_\delta\widehat J_f\|^2_{L^2(\M)}
\leq \| \mathrm C_\delta\widehat J_f\|^2_{L^2(\M)},
\]
and hence
\[
\|\widehat J_f- \mathrm C_\delta\widehat J_f\|_{L^2(\M)}=0,
\]
which finishes the claim.
\end{proof}

Note that this extends the result of \cite[Theorem~2]{LiKY26} from diffeomorphisms to homeomorphisms: local minimality is global.

\section{Lifting discrete maps}
\label{sec:lift}

We now turn to the discrete setting.
Let $\Mh$ be a triangular mesh approximating $\M$ with maximal triangle diameter $h$:
\[
h:= \max_{\tau:\mathrm{triangles}\in \Mh} \mathrm{diam}(\tau),
\]
which we call the mesh size.

We approximate a homeomorphism $f:\M \to\N$ by a piecewise-linear interpolant $f_h$, called a simplicial map.
It is linear on each triangle, so $\Nh:= f_h(\Mh)$ is also a triangular mesh approximating $\N$.
In particular, given a triangle $\tau$ with vertices $v_0,v_1,v_2$, the image $f_h(x)$ for $x \in \tau$ is explicitly determined by the barycentric representation
\[
f_h(x)
= \lambda_0 (x)\,f(v_0) + \lambda_1 (x)\,f(v_1) + \lambda_2 (x)\,f(v_2),
\]
where the barycentric coordinates are
\begin{equation}
\lambda_0 (x)=\frac{\mathrm{area}([x,v_1,v_2])}{\mathrm{area}(\tau)} , \quad
\lambda_1 (x)=\frac{\mathrm{area}([v_0,x,v_2])}{\mathrm{area}(\tau)}, \quad
\lambda_2 (x)=\frac{\mathrm{area}([v_0,v_1,x])}{\mathrm{area}(\tau)}.
\label{eq:bary_coord}
\end{equation}
In this section, we provide a lifting that transforms a simplicial map $f_h$ into a homeomorphism and then estimate the error produced by such a lifting.

\subsection{Closest-point projection}
\label{sec:close}
Suppose $\partial\M=\varnothing$. By the Tubular Neighborhood Theorem~\cite[Theorem 6.24]{Lee13}, the tubular neighborhood of a smooth surface $\mathcal{M}$ is well-defined. We assume $\Mh$ lies within its tubular neighborhood and  use the closest-point projection \cite[Section 4.1]{DzEl13}
\begin{equation} \label{eq:close_point_proj}
\pi_\M(x) := \argmin_{p\in \M} \| p - x\|^2.
\end{equation}
Since $x$ is projected along the normal of $\M$ at $\pi_\M(x)$, if $\pi_\M$ is bijective, $x$ has
a unique representation
\begin{equation} \label{eq:patch_projection}
	x = \pi_\M(x) + t_\M(x) \,n_\M \big(\pi_\M(x) \big),
\end{equation}
where $t_\M$ is the signed distance function and $n_\M$ is the unit normal.

For $\partial\M \neq \varnothing$, we consider a smooth extension $\M^e\supset\M$ to ensure $\pi_\M$ is well defined on $\partial \Mh$. We keep the same notation $\pi_\M$ for $\pi_{\M^e}$ and use the assumption that $\pi_\M|_{\Mh}$ is bijective to obtain the same representation \eqref{eq:patch_projection}.
We further require shape regularity:
\begin{equation} \label{eq:shape_regular}
	\max_{\tau : \mathrm{triangles}} \frac{\mathrm{diam}(\tau)}{\mathrm{inrad}(\tau)} \leq C,
\end{equation}
where $\mathrm{diam}(\tau)$ and $\mathrm{inrad}(\tau)$ denote the diameter and in-ball radius of the triangle $\tau$. We therefore use the following assumption for $\Mh$.

\begin{assumption}[Source meshes]\label{ass:source}
	The triangulated surfaces $\{\Mh\}_{h >0}$ satisfy the following:
	\begin{itemize}
		\item[(i)] the vertices of $\Mh$ lie on $\M$, and the boundary vertices lie on $\partial\M$;
		\item[(ii)] $\pi_\M|_{\Mh}$ is a homeomorphism onto its image, which is $\M$ if $\partial\M=\varnothing$;
		\item[(iii)] $\{\Mh\}$ is uniformly shape-regular in the sense of \eqref{eq:shape_regular};
		\item[(iv)] if $\partial\M\neq\varnothing$, the arcs of $\partial\M$ joining the endpoints of the boundary edges of $\Mh$ cover $\partial\M$ exactly once.
	\end{itemize}
\end{assumption}

Assumption~\ref{ass:source} (i)--(iii) directly gives the estimate of area change under projection as in \cite[Lemma 4.1]{DzEl13}.

\begin{lemma}[{\cite[Lemma 4.1]{DzEl13}}] \label{lem:area_change}
	Suppose $\M$ is a smooth surface and $\Mh$ is a triangular mesh approximating $\M$ satisfying Assumption~\ref{ass:source}. Then, for sufficiently small $h$
	\begin{equation}\label{eq:proj_area_estimate}
		\big\| t_\M \big\|_{L^\infty(\Mh)} \leq Ch^2, \qquad
		\big\| 1 - J_{\pi_{\M}} \big\|_{L^\infty(\Mh)} \leq  C h^2.
	\end{equation}
\end{lemma}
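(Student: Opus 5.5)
The plan is to follow the standard argument of Dziuk and Elliott, which works triangle by triangle. I will treat the two estimates in order: first the distance estimate, then the Jacobian estimate obtained from it. Fix a triangle $\tau$ of $\Mh$. By Assumption~\ref{ass:source}(ii), $\Mh$ lies in the tubular neighborhood of $\M$ (or of $\M^e$ when $\partial\M\neq\varnothing$). On that neighborhood the signed distance $t_\M$ is smooth, and its first and second derivatives are bounded by constants depending only on $\M$; for the Hessian $H=D^2t_\M$ these are the principal curvatures, suitably corrected off the surface.

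\textbf{Distance estimate.} By Assumption~\ref{ass:source}(i), all three vertices of $\tau$ lie on $\M$, so $t_\M$ vanishes at them. Hence the affine interpolant of $t_\M|_\tau$ on $\tau$ is identically zero. The standard linear interpolation estimate on a triangle of diameter $h$ then gives
\[
\|t_\M\|_{L^\infty(\tau)}\le C h^2\|D^2 t_\M\|_{L^\infty}\le Ch^2 .
\]
This step does not use shape regularity.

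The same interpolation argument applied to the gradient does need it. Using the shape regularity \eqref{eq:shape_regular} from Assumption~\ref{ass:source}(iii), the tangential gradient of $t_\M|_\tau$ satisfies $\|\nabla_\tau t_\M\|_{L^\infty(\tau)}\le Ch$. Since $\nabla t_\M = n_\M\circ\pi_\M$, this says that the component of $n_\M(\pi_\M(x))$ lying in the plane of $\tau$ has size $O(h)$. Let $n_h$ be the unit normal of $\tau$. Then
\[
|n_\M\circ\pi_\M - n_h|\le Ch,
\]
once the orientation of $n_h$ is chosen so that $n_\M\cdot n_h>0$. This orientation choice is justified by the fact that $\pi_\M|_{\Mh}$ is an orientation-consistent homeomorphism, again by (ii). Consequently
\[
1-n_\M\cdot n_h=\tfrac12|n_\M-n_h|^2\le Ch^2 .
\]

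\textbf{Jacobian estimate.} For $x=p+t\,n_\M(p)$ with $p=\pi_\M(x)$, differentiate the representation \eqref{eq:patch_projection}. This gives $D\pi_\M(x)=P(p)\,(I-tH(x))^{\pm1}P(p)$, where $P$ is the tangential projector. Computing the determinant of $D\pi_\M$ restricted to the plane of $\tau$ yields the Dziuk--Elliott formula
\[
J_{\pi_\M}(x)=\frac{n_\M(p)\cdot n_h}{\prod_{i=1}^2\bigl(1-t\,\kappa_i(p)\bigr)},
\]
where $\kappa_1,\kappa_2$ are the principal curvatures at $p$ (or its reciprocal, depending on convention; either form suffices). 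Inserting the two bounds $|t|\le Ch^2$ and $1-n_\M\cdot n_h\le Ch^2$ gives $|1-J_{\pi_\M}|\le Ch^2$ uniformly on $\tau$, for $h$ small enough that $|t\kappa_i|\le 1/2$. All constants are independent of $\tau$, so taking the maximum over triangles completes the proof.

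The main obstacle is the normal estimate $|n_\M-n_h|\le Ch$. This is the only point where shape regularity enters: without it, thin triangles with vertices on $\M$ can have normals far from $n_\M$. A secondary point is verifying the sign $n_\M\cdot n_h>0$ from the bijectivity hypothesis. To get it, I would use that $n_\M\cdot n_h$ is continuous and nonvanishing on each triangle, since $\pi_\M|_\tau$ is a local homeomorphism, and is close to $\pm1$ by the $O(h)$ estimate; consistency of the orientation of $\Mh$ then fixes the sign.
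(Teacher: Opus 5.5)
Your proposal is correct and is the standard Dziuk--Elliott argument behind \cite[Lemma 4.1]{DzEl13}, which the paper cites rather than reproves: the distance bound by interpolating $t_\M$ without shape regularity (as in Lemma~\ref{lem:thin_triangle}), shape regularity for the $O(h)$ normal estimate, and then the Dziuk--Elliott Jacobian formula. The orientation and sign discussion is unnecessary, since $J_{\pi_\M}$ is an unsigned area ratio and you only need $|n_\M\cdot n_h|=\bigl(1-|(I-n_hn_h^\top)n_\M|^2\bigr)^{1/2}\geq 1-Ch^2$, which follows from the tangential-gradient bound alone (injectivity of $\pi_\M|_\tau$ by itself would not rule out a vanishing Jacobian).
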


To analyze the simplicial map, we can follow the same assumption with the closest-point projection $\pi_\N$. Therefore, $\pi_\N|_{\Nh}$ has the same estimate under the shape-regularity assumption. It is a valid assumption since the interpolant of a diffeomorphism satisfies shape-regularity for sufficiently small $h$.

Nonetheless, we are concerned with homeomorphisms in this work, and $E_\rS$ only controls area but not derivatives. We naturally extend the results to meshes without shape regularity, with only one-sided control (Figure~\ref{fig:thin_triangle}). To avoid confusion of notation, we denote the mesh size of the image mesh by
\[
k_h:= \max_{\tau:\mathrm{triangles}\in f_h(\Mh)} \mathrm{diam}(\tau)
\]
and provide the following lemma.

\begin{figure}[tb]
\centering
\begin{tikzpicture}[x=2.1cm, y=2.1cm, font=\footnotesize]
\fill[gray!8] (-0.75,0.33) -- (-0.66,0.20) -- (-0.56,0.08) -- (-0.47,-0.01) -- (-0.37,-0.10) -- (-0.27,-0.17) -- (-0.18,-0.22) -- (-0.08,-0.26) -- (0.01,-0.29) -- (0.11,-0.30) -- (0.21,-0.29) -- (0.30,-0.27) -- (0.40,-0.24) -- (0.50,-0.19) -- (0.59,-0.13) -- (0.69,-0.05) -- (0.78,0.04) -- (0.88,0.14) -- (0.98,0.26) -- (1.07,0.40) -- (1.17,0.55) -- (1.17,0.55) -- (1.13,0.61) -- (1.09,0.67) -- (1.04,0.73) -- (1.00,0.78) -- (0.96,0.84) -- (0.92,0.90) -- (0.88,0.96) -- (0.84,1.02) -- (0.80,1.08) -- (0.75,1.14) -- (0.75,1.14) -- (0.66,0.99) -- (0.56,0.85) -- (0.47,0.73) -- (0.37,0.62) -- (0.27,0.53) -- (0.18,0.46) -- (0.08,0.39) -- (-0.01,0.34) -- (-0.11,0.31) -- (-0.21,0.29) -- (-0.30,0.29) -- (-0.40,0.30) -- (-0.50,0.33) -- (-0.59,0.37) -- (-0.69,0.42) -- (-0.78,0.49) -- (-0.88,0.57) -- (-0.98,0.67) -- (-1.07,0.78) -- (-1.17,0.91) -- (-1.17,0.91) -- (-1.13,0.85) -- (-1.09,0.79) -- (-1.04,0.74) -- (-1.00,0.68) -- (-0.96,0.62) -- (-0.92,0.56) -- (-0.88,0.50) -- (-0.84,0.44) -- (-0.80,0.38) -- (-0.75,0.33) -- cycle;
\draw[gray!45, line width=0.3pt] (-0.75,0.33) -- (-0.66,0.20) -- (-0.56,0.08) -- (-0.47,-0.01) -- (-0.37,-0.10) -- (-0.27,-0.17) -- (-0.18,-0.22) -- (-0.08,-0.26) -- (0.01,-0.29) -- (0.11,-0.30) -- (0.21,-0.29) -- (0.30,-0.27) -- (0.40,-0.24) -- (0.50,-0.19) -- (0.59,-0.13) -- (0.69,-0.05) -- (0.78,0.04) -- (0.88,0.14) -- (0.98,0.26) -- (1.07,0.40) -- (1.17,0.55);
\draw[gray!45, line width=0.3pt] (-0.82,0.42) -- (-0.73,0.30) -- (-0.63,0.18) -- (-0.54,0.08) -- (-0.44,0.00) -- (-0.34,-0.07) -- (-0.25,-0.12) -- (-0.15,-0.16) -- (-0.05,-0.19) -- (0.04,-0.20) -- (0.14,-0.20) -- (0.23,-0.18) -- (0.33,-0.14) -- (0.43,-0.10) -- (0.52,-0.03) -- (0.62,0.04) -- (0.71,0.13) -- (0.81,0.24) -- (0.91,0.36) -- (1.00,0.50) -- (1.10,0.65);
\draw[gray!45, line width=0.3pt] (-0.89,0.52) -- (-0.80,0.39) -- (-0.70,0.28) -- (-0.60,0.18) -- (-0.51,0.10) -- (-0.41,0.03) -- (-0.32,-0.03) -- (-0.22,-0.07) -- (-0.12,-0.09) -- (-0.03,-0.10) -- (0.07,-0.10) -- (0.17,-0.08) -- (0.26,-0.05) -- (0.36,0.00) -- (0.45,0.06) -- (0.55,0.14) -- (0.65,0.23) -- (0.74,0.34) -- (0.84,0.46) -- (0.93,0.60) -- (1.03,0.75);
\draw[gray!45, line width=0.3pt] (-0.96,0.62) -- (-0.87,0.49) -- (-0.77,0.38) -- (-0.67,0.28) -- (-0.58,0.20) -- (-0.48,0.13) -- (-0.38,0.07) -- (-0.29,0.03) -- (-0.19,0.01) -- (-0.10,-0.00) -- (0.00,0.00) -- (0.10,0.02) -- (0.19,0.05) -- (0.29,0.10) -- (0.38,0.16) -- (0.48,0.24) -- (0.58,0.33) -- (0.67,0.44) -- (0.77,0.56) -- (0.87,0.69) -- (0.96,0.84);
\draw[gray!45, line width=0.3pt] (-1.03,0.72) -- (-0.93,0.59) -- (-0.84,0.48) -- (-0.74,0.38) -- (-0.65,0.29) -- (-0.55,0.22) -- (-0.45,0.17) -- (-0.36,0.13) -- (-0.26,0.10) -- (-0.17,0.09) -- (-0.07,0.10) -- (0.03,0.12) -- (0.12,0.15) -- (0.22,0.20) -- (0.32,0.26) -- (0.41,0.34) -- (0.51,0.43) -- (0.60,0.53) -- (0.70,0.66) -- (0.80,0.79) -- (0.89,0.94);
\draw[gray!45, line width=0.3pt] (-1.10,0.81) -- (-1.00,0.69) -- (-0.91,0.57) -- (-0.81,0.48) -- (-0.71,0.39) -- (-0.62,0.32) -- (-0.52,0.27) -- (-0.43,0.23) -- (-0.33,0.20) -- (-0.23,0.19) -- (-0.14,0.20) -- (-0.04,0.21) -- (0.05,0.25) -- (0.15,0.29) -- (0.25,0.36) -- (0.34,0.43) -- (0.44,0.53) -- (0.54,0.63) -- (0.63,0.75) -- (0.73,0.89) -- (0.82,1.04);
\draw[gray!45, line width=0.3pt] (-1.17,0.91) -- (-1.07,0.78) -- (-0.98,0.67) -- (-0.88,0.57) -- (-0.78,0.49) -- (-0.69,0.42) -- (-0.59,0.37) -- (-0.50,0.33) -- (-0.40,0.30) -- (-0.30,0.29) -- (-0.21,0.29) -- (-0.11,0.31) -- (-0.01,0.34) -- (0.08,0.39) -- (0.18,0.46) -- (0.27,0.53) -- (0.37,0.62) -- (0.47,0.73) -- (0.56,0.85) -- (0.66,0.99) -- (0.75,1.14);
\draw[gray!45, line width=0.3pt] (-0.75,0.33) -- (-0.80,0.38) -- (-0.84,0.44) -- (-0.88,0.50) -- (-0.92,0.56) -- (-0.96,0.62) -- (-1.00,0.68) -- (-1.04,0.74) -- (-1.09,0.79) -- (-1.13,0.85) -- (-1.17,0.91);
\draw[gray!45, line width=0.3pt] (-0.51,0.03) -- (-0.56,0.09) -- (-0.60,0.15) -- (-0.64,0.21) -- (-0.68,0.27) -- (-0.72,0.33) -- (-0.76,0.39) -- (-0.80,0.44) -- (-0.84,0.50) -- (-0.89,0.56) -- (-0.93,0.62);
\draw[gray!45, line width=0.3pt] (-0.27,-0.17) -- (-0.32,-0.11) -- (-0.36,-0.05) -- (-0.40,0.01) -- (-0.44,0.07) -- (-0.48,0.13) -- (-0.52,0.19) -- (-0.56,0.24) -- (-0.60,0.30) -- (-0.65,0.36) -- (-0.69,0.42);
\draw[gray!45, line width=0.3pt] (-0.03,-0.28) -- (-0.07,-0.22) -- (-0.12,-0.16) -- (-0.16,-0.10) -- (-0.20,-0.04) -- (-0.24,0.02) -- (-0.28,0.08) -- (-0.32,0.13) -- (-0.36,0.19) -- (-0.41,0.25) -- (-0.45,0.31);
\draw[gray!45, line width=0.3pt] (0.21,-0.29) -- (0.17,-0.23) -- (0.12,-0.18) -- (0.08,-0.12) -- (0.04,-0.06) -- (0.00,0.00) -- (-0.04,0.06) -- (-0.08,0.12) -- (-0.12,0.18) -- (-0.17,0.23) -- (-0.21,0.29);
\draw[gray!45, line width=0.3pt] (0.45,-0.22) -- (0.41,-0.16) -- (0.36,-0.10) -- (0.32,-0.04) -- (0.28,0.02) -- (0.24,0.07) -- (0.20,0.13) -- (0.16,0.19) -- (0.12,0.25) -- (0.07,0.31) -- (0.03,0.37);
\draw[gray!45, line width=0.3pt] (0.69,-0.05) -- (0.65,0.00) -- (0.60,0.06) -- (0.56,0.12) -- (0.52,0.18) -- (0.48,0.24) -- (0.44,0.30) -- (0.40,0.36) -- (0.36,0.41) -- (0.32,0.47) -- (0.27,0.53);
\draw[gray!45, line width=0.3pt] (0.93,0.20) -- (0.89,0.26) -- (0.84,0.32) -- (0.80,0.38) -- (0.76,0.44) -- (0.72,0.50) -- (0.68,0.55) -- (0.64,0.61) -- (0.60,0.67) -- (0.56,0.73) -- (0.51,0.79);
\draw[gray!45, line width=0.3pt] (1.17,0.55) -- (1.13,0.61) -- (1.09,0.67) -- (1.04,0.73) -- (1.00,0.78) -- (0.96,0.84) -- (0.92,0.90) -- (0.88,0.96) -- (0.84,1.02) -- (0.80,1.08) -- (0.75,1.14);
\draw[gray!70, line width=0.5pt] (-0.75,0.33) -- (-0.66,0.20) -- (-0.56,0.08) -- (-0.47,-0.01) -- (-0.37,-0.10) -- (-0.27,-0.17) -- (-0.18,-0.22) -- (-0.08,-0.26) -- (0.01,-0.29) -- (0.11,-0.30) -- (0.21,-0.29) -- (0.30,-0.27) -- (0.40,-0.24) -- (0.50,-0.19) -- (0.59,-0.13) -- (0.69,-0.05) -- (0.78,0.04) -- (0.88,0.14) -- (0.98,0.26) -- (1.07,0.40) -- (1.17,0.55) -- (1.17,0.55) -- (1.13,0.61) -- (1.09,0.67) -- (1.04,0.73) -- (1.00,0.78) -- (0.96,0.84) -- (0.92,0.90) -- (0.88,0.96) -- (0.84,1.02) -- (0.80,1.08) -- (0.75,1.14) -- (0.75,1.14) -- (0.66,0.99) -- (0.56,0.85) -- (0.47,0.73) -- (0.37,0.62) -- (0.27,0.53) -- (0.18,0.46) -- (0.08,0.39) -- (-0.01,0.34) -- (-0.11,0.31) -- (-0.21,0.29) -- (-0.30,0.29) -- (-0.40,0.30) -- (-0.50,0.33) -- (-0.59,0.37) -- (-0.69,0.42) -- (-0.78,0.49) -- (-0.88,0.57) -- (-0.98,0.67) -- (-1.07,0.78) -- (-1.17,0.91) -- (-1.17,0.91) -- (-1.13,0.85) -- (-1.09,0.79) -- (-1.04,0.74) -- (-1.00,0.68) -- (-0.96,0.62) -- (-0.92,0.56) -- (-0.88,0.50) -- (-0.84,0.44) -- (-0.80,0.38) -- (-0.75,0.33) -- cycle;
\fill[gray!55] (-0.23,-0.13) -- (-0.19,-0.15) -- (-0.16,-0.16) -- (-0.12,-0.17) -- (-0.08,-0.18) -- (-0.04,-0.18) -- (0.01,-0.19) -- (0.06,-0.19) -- (0.12,-0.18) -- (0.17,-0.17) -- (0.22,-0.16) -- (0.27,-0.14) -- (0.31,-0.12) -- (0.35,-0.10) -- (0.39,-0.08) -- (0.43,-0.05) -- (0.46,-0.03) -- (0.43,-0.02) -- (0.39,-0.01) -- (0.36,0.01) -- (0.32,0.02) -- (0.28,0.03) -- (0.25,0.05) -- (0.21,0.06) -- (0.17,0.08) -- (0.13,0.09) -- (0.10,0.11) -- (0.06,0.12) -- (0.02,0.14) -- (-0.02,0.16) -- (-0.06,0.18) -- (-0.09,0.20) -- (-0.13,0.22) -- (-0.14,0.19) -- (-0.14,0.16) -- (-0.15,0.13) -- (-0.16,0.11) -- (-0.17,0.08) -- (-0.17,0.06) -- (-0.18,0.04) -- (-0.19,0.01) -- (-0.20,-0.01) -- (-0.20,-0.03) -- (-0.21,-0.05) -- (-0.21,-0.06) -- (-0.22,-0.08) -- (-0.22,-0.10) -- (-0.22,-0.12) -- cycle;
\draw[black, line width=0.6pt, densely dashed] (-0.23,-0.13) -- (0.46,-0.03) -- (-0.13,0.22) -- cycle;
\fill (-0.227,-0.133) circle (0.9pt);
\fill (0.462,-0.029) circle (0.9pt);
\fill (-0.132,0.218) circle (0.9pt);
\fill[gray!8] (2.00,0.33) -- (2.09,0.20) -- (2.19,0.08) -- (2.28,-0.01) -- (2.38,-0.10) -- (2.48,-0.17) -- (2.57,-0.22) -- (2.67,-0.26) -- (2.76,-0.29) -- (2.86,-0.30) -- (2.96,-0.29) -- (3.05,-0.27) -- (3.15,-0.24) -- (3.25,-0.19) -- (3.34,-0.13) -- (3.44,-0.05) -- (3.53,0.04) -- (3.63,0.14) -- (3.73,0.26) -- (3.82,0.40) -- (3.92,0.55) -- (3.92,0.55) -- (3.88,0.61) -- (3.84,0.67) -- (3.79,0.73) -- (3.75,0.78) -- (3.71,0.84) -- (3.67,0.90) -- (3.63,0.96) -- (3.59,1.02) -- (3.55,1.08) -- (3.50,1.14) -- (3.50,1.14) -- (3.41,0.99) -- (3.31,0.85) -- (3.22,0.73) -- (3.12,0.62) -- (3.02,0.53) -- (2.93,0.46) -- (2.83,0.39) -- (2.74,0.34) -- (2.64,0.31) -- (2.54,0.29) -- (2.45,0.29) -- (2.35,0.30) -- (2.25,0.33) -- (2.16,0.37) -- (2.06,0.42) -- (1.97,0.49) -- (1.87,0.57) -- (1.77,0.67) -- (1.68,0.78) -- (1.58,0.91) -- (1.58,0.91) -- (1.62,0.85) -- (1.66,0.79) -- (1.71,0.74) -- (1.75,0.68) -- (1.79,0.62) -- (1.83,0.56) -- (1.87,0.50) -- (1.91,0.44) -- (1.95,0.38) -- (2.00,0.33) -- cycle;
\draw[gray!45, line width=0.3pt] (2.00,0.33) -- (2.09,0.20) -- (2.19,0.08) -- (2.28,-0.01) -- (2.38,-0.10) -- (2.48,-0.17) -- (2.57,-0.22) -- (2.67,-0.26) -- (2.76,-0.29) -- (2.86,-0.30) -- (2.96,-0.29) -- (3.05,-0.27) -- (3.15,-0.24) -- (3.25,-0.19) -- (3.34,-0.13) -- (3.44,-0.05) -- (3.53,0.04) -- (3.63,0.14) -- (3.73,0.26) -- (3.82,0.40) -- (3.92,0.55);
\draw[gray!45, line width=0.3pt] (1.93,0.42) -- (2.02,0.30) -- (2.12,0.18) -- (2.21,0.08) -- (2.31,0.00) -- (2.41,-0.07) -- (2.50,-0.12) -- (2.60,-0.16) -- (2.70,-0.19) -- (2.79,-0.20) -- (2.89,-0.20) -- (2.98,-0.18) -- (3.08,-0.14) -- (3.18,-0.10) -- (3.27,-0.03) -- (3.37,0.04) -- (3.46,0.13) -- (3.56,0.24) -- (3.66,0.36) -- (3.75,0.50) -- (3.85,0.65);
\draw[gray!45, line width=0.3pt] (1.86,0.52) -- (1.95,0.39) -- (2.05,0.28) -- (2.15,0.18) -- (2.24,0.10) -- (2.34,0.03) -- (2.43,-0.03) -- (2.53,-0.07) -- (2.63,-0.09) -- (2.72,-0.10) -- (2.82,-0.10) -- (2.92,-0.08) -- (3.01,-0.05) -- (3.11,0.00) -- (3.20,0.06) -- (3.30,0.14) -- (3.40,0.23) -- (3.49,0.34) -- (3.59,0.46) -- (3.68,0.60) -- (3.78,0.75);
\draw[gray!45, line width=0.3pt] (1.79,0.62) -- (1.88,0.49) -- (1.98,0.38) -- (2.08,0.28) -- (2.17,0.20) -- (2.27,0.13) -- (2.37,0.07) -- (2.46,0.03) -- (2.56,0.01) -- (2.65,-0.00) -- (2.75,0.00) -- (2.85,0.02) -- (2.94,0.05) -- (3.04,0.10) -- (3.13,0.16) -- (3.23,0.24) -- (3.33,0.33) -- (3.42,0.44) -- (3.52,0.56) -- (3.62,0.69) -- (3.71,0.84);
\draw[gray!45, line width=0.3pt] (1.72,0.72) -- (1.82,0.59) -- (1.91,0.48) -- (2.01,0.38) -- (2.10,0.29) -- (2.20,0.22) -- (2.30,0.17) -- (2.39,0.13) -- (2.49,0.10) -- (2.58,0.09) -- (2.68,0.10) -- (2.78,0.12) -- (2.87,0.15) -- (2.97,0.20) -- (3.07,0.26) -- (3.16,0.34) -- (3.26,0.43) -- (3.35,0.53) -- (3.45,0.66) -- (3.55,0.79) -- (3.64,0.94);
\draw[gray!45, line width=0.3pt] (1.65,0.81) -- (1.75,0.69) -- (1.84,0.57) -- (1.94,0.48) -- (2.04,0.39) -- (2.13,0.32) -- (2.23,0.27) -- (2.32,0.23) -- (2.42,0.20) -- (2.52,0.19) -- (2.61,0.20) -- (2.71,0.21) -- (2.80,0.25) -- (2.90,0.29) -- (3.00,0.36) -- (3.09,0.43) -- (3.19,0.53) -- (3.29,0.63) -- (3.38,0.75) -- (3.48,0.89) -- (3.57,1.04);
\draw[gray!45, line width=0.3pt] (1.58,0.91) -- (1.68,0.78) -- (1.77,0.67) -- (1.87,0.57) -- (1.97,0.49) -- (2.06,0.42) -- (2.16,0.37) -- (2.25,0.33) -- (2.35,0.30) -- (2.45,0.29) -- (2.54,0.29) -- (2.64,0.31) -- (2.74,0.34) -- (2.83,0.39) -- (2.93,0.46) -- (3.02,0.53) -- (3.12,0.62) -- (3.22,0.73) -- (3.31,0.85) -- (3.41,0.99) -- (3.50,1.14);
\draw[gray!45, line width=0.3pt] (2.00,0.33) -- (1.95,0.38) -- (1.91,0.44) -- (1.87,0.50) -- (1.83,0.56) -- (1.79,0.62) -- (1.75,0.68) -- (1.71,0.74) -- (1.66,0.79) -- (1.62,0.85) -- (1.58,0.91);
\draw[gray!45, line width=0.3pt] (2.24,0.03) -- (2.19,0.09) -- (2.15,0.15) -- (2.11,0.21) -- (2.07,0.27) -- (2.03,0.33) -- (1.99,0.39) -- (1.95,0.44) -- (1.91,0.50) -- (1.86,0.56) -- (1.82,0.62);
\draw[gray!45, line width=0.3pt] (2.48,-0.17) -- (2.43,-0.11) -- (2.39,-0.05) -- (2.35,0.01) -- (2.31,0.07) -- (2.27,0.13) -- (2.23,0.19) -- (2.19,0.24) -- (2.15,0.30) -- (2.10,0.36) -- (2.06,0.42);
\draw[gray!45, line width=0.3pt] (2.72,-0.28) -- (2.68,-0.22) -- (2.63,-0.16) -- (2.59,-0.10) -- (2.55,-0.04) -- (2.51,0.02) -- (2.47,0.08) -- (2.43,0.13) -- (2.39,0.19) -- (2.34,0.25) -- (2.30,0.31);
\draw[gray!45, line width=0.3pt] (2.96,-0.29) -- (2.92,-0.23) -- (2.87,-0.18) -- (2.83,-0.12) -- (2.79,-0.06) -- (2.75,0.00) -- (2.71,0.06) -- (2.67,0.12) -- (2.63,0.18) -- (2.58,0.23) -- (2.54,0.29);
\draw[gray!45, line width=0.3pt] (3.20,-0.22) -- (3.16,-0.16) -- (3.11,-0.10) -- (3.07,-0.04) -- (3.03,0.02) -- (2.99,0.07) -- (2.95,0.13) -- (2.91,0.19) -- (2.87,0.25) -- (2.82,0.31) -- (2.78,0.37);
\draw[gray!45, line width=0.3pt] (3.44,-0.05) -- (3.40,0.00) -- (3.35,0.06) -- (3.31,0.12) -- (3.27,0.18) -- (3.23,0.24) -- (3.19,0.30) -- (3.15,0.36) -- (3.11,0.41) -- (3.07,0.47) -- (3.02,0.53);
\draw[gray!45, line width=0.3pt] (3.68,0.20) -- (3.64,0.26) -- (3.59,0.32) -- (3.55,0.38) -- (3.51,0.44) -- (3.47,0.50) -- (3.43,0.55) -- (3.39,0.61) -- (3.35,0.67) -- (3.31,0.73) -- (3.26,0.79);
\draw[gray!45, line width=0.3pt] (3.92,0.55) -- (3.88,0.61) -- (3.84,0.67) -- (3.79,0.73) -- (3.75,0.78) -- (3.71,0.84) -- (3.67,0.90) -- (3.63,0.96) -- (3.59,1.02) -- (3.55,1.08) -- (3.50,1.14);
\draw[gray!70, line width=0.5pt] (2.00,0.33) -- (2.09,0.20) -- (2.19,0.08) -- (2.28,-0.01) -- (2.38,-0.10) -- (2.48,-0.17) -- (2.57,-0.22) -- (2.67,-0.26) -- (2.76,-0.29) -- (2.86,-0.30) -- (2.96,-0.29) -- (3.05,-0.27) -- (3.15,-0.24) -- (3.25,-0.19) -- (3.34,-0.13) -- (3.44,-0.05) -- (3.53,0.04) -- (3.63,0.14) -- (3.73,0.26) -- (3.82,0.40) -- (3.92,0.55) -- (3.92,0.55) -- (3.88,0.61) -- (3.84,0.67) -- (3.79,0.73) -- (3.75,0.78) -- (3.71,0.84) -- (3.67,0.90) -- (3.63,0.96) -- (3.59,1.02) -- (3.55,1.08) -- (3.50,1.14) -- (3.50,1.14) -- (3.41,0.99) -- (3.31,0.85) -- (3.22,0.73) -- (3.12,0.62) -- (3.02,0.53) -- (2.93,0.46) -- (2.83,0.39) -- (2.74,0.34) -- (2.64,0.31) -- (2.54,0.29) -- (2.45,0.29) -- (2.35,0.30) -- (2.25,0.33) -- (2.16,0.37) -- (2.06,0.42) -- (1.97,0.49) -- (1.87,0.57) -- (1.77,0.67) -- (1.68,0.78) -- (1.58,0.91) -- (1.58,0.91) -- (1.62,0.85) -- (1.66,0.79) -- (1.71,0.74) -- (1.75,0.68) -- (1.79,0.62) -- (1.83,0.56) -- (1.87,0.50) -- (1.91,0.44) -- (1.95,0.38) -- (2.00,0.33) -- cycle;
\fill[gray!55] (1.98,0.38) -- (2.00,0.36) -- (2.02,0.33) -- (2.05,0.31) -- (2.07,0.29) -- (2.09,0.27) -- (2.12,0.25) -- (2.14,0.23) -- (2.17,0.21) -- (2.20,0.18) -- (2.23,0.17) -- (2.26,0.15) -- (2.29,0.13) -- (2.32,0.11) -- (2.36,0.09) -- (2.39,0.08) -- (2.43,0.07) -- (2.46,0.06) -- (2.50,0.05) -- (2.54,0.04) -- (2.58,0.04) -- (2.62,0.04) -- (2.65,0.04) -- (2.69,0.04) -- (2.72,0.05) -- (2.75,0.05) -- (2.79,0.05) -- (2.83,0.06) -- (2.87,0.07) -- (2.91,0.08) -- (2.95,0.10) -- (2.99,0.12) -- (3.03,0.14) -- (3.07,0.16) -- (3.11,0.18) -- (3.15,0.20) -- (3.18,0.23) -- (3.22,0.25) -- (3.25,0.28) -- (3.28,0.31) -- (3.31,0.33) -- (3.34,0.36) -- (3.37,0.39) -- (3.40,0.41) -- (3.42,0.44) -- (3.45,0.47) -- (3.48,0.50) -- (3.50,0.52) -- (3.52,0.55) -- (3.50,0.52) -- (3.47,0.48) -- (3.44,0.45) -- (3.41,0.41) -- (3.38,0.37) -- (3.34,0.33) -- (3.30,0.29) -- (3.25,0.25) -- (3.19,0.20) -- (3.11,0.14) -- (3.00,0.07) -- (2.75,-0.00) -- (2.51,0.01) -- (2.39,0.06) -- (2.32,0.10) -- (2.26,0.13) -- (2.21,0.17) -- (2.17,0.20) -- (2.13,0.23) -- (2.09,0.26) -- (2.06,0.29) -- (2.03,0.32) -- (2.01,0.35) -- cycle;
\draw[black, line width=0.6pt, densely dashed, fill=gray!25, fill opacity=0.55] (1.98,0.38) -- (2.72,0.05) -- (3.52,0.55) -- cycle;
\draw[-{Latex[length=2.4pt,width=1.8pt]}, black!80, line width=0.45pt] (2.749,0.422) -- (2.749,0.001);
\draw[-{Latex[length=2.4pt,width=1.8pt]}, black!80, line width=0.45pt] (2.738,0.297) -- (2.738,0.016);
\draw[-{Latex[length=2.4pt,width=1.8pt]}, black!80, line width=0.45pt] (2.437,0.346) -- (2.273,0.129);
\draw[-{Latex[length=2.4pt,width=1.8pt]}, black!80, line width=0.45pt] (3.054,0.415) -- (3.218,0.235);
\fill (1.981,0.378) circle (0.9pt);
\fill (2.717,0.047) circle (0.9pt);
\fill (3.525,0.550) circle (0.9pt);
\node at (2.47,0.35) {$\tau$};
\draw[gray!60, line width=0.3pt] (3.17,0.20) -- (3.42,-0.08) node[anchor=north west, inner sep=1pt, text=black] {$\pi_\N(\tau)$};
\node at (0,-0.42) {(a) shape-regular: $J_{\pi_\N|_\tau}\approx1$};
\node at (2.75,-0.42) {(b) thin: $J_{\pi_\N|_\tau}\ll1$};
\end{tikzpicture}
\caption{Projection of a triangle $\tau$ (dashed) onto $\N$ (dark gray): (a) a shape-regular triangle keeps its area; (b) a thin triangle may collapse.}
\label{fig:thin_triangle}
\end{figure}

\begin{lemma}\label{lem:thin_triangle}
	Let $\N$ be a smooth surface and $\tau$ be a triangle with vertices on $\N$ whose diameter is $k_h$. If $k_h$ is sufficiently small, then
	\begin{equation} \label{eq:thin_triangle}
	\sup_{y\in\tau}\operatorname{dist}(y,\N)\leq Ck_h^2,
	\qquad
	0\leq J_{\pi_\N|_\tau}(y)\leq 1+Ck_h^2
	\quad\text{for a.e. }y\in\tau,
	\end{equation}
	where the constant $C$ depends only on $\N$.
\end{lemma}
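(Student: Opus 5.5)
We prove the two bounds in \eqref{eq:thin_triangle} separately. Neither uses the shape of $\tau$; only its diameter $k_h$ and the uniform geometry of $\N$ enter. The first bound is a distance estimate for a triangle whose vertices lie on $\N$. The second comes from the fact that, near $\N$, the differential of $\pi_\N$ is almost the orthogonal projection onto a tangent plane. Throughout, $k_h$ is small enough that $\tau$ lies in the tubular neighborhood of $\N$ where $\pi_\N$ and $t_\N$ are smooth.

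\textbf{Distance bound.} Fix a vertex $p_0$ of $\tau$. For $k_h$ below a threshold depending only on $\N$, write $\N$ near $p_0$ as a graph over $T_{p_0}\N$,
\[
\{\,p_0+\xi+\psi(\xi)\,n_\N(p_0)\,\},\qquad \psi(0)=0,\quad \nabla\psi(0)=0,\quad |\psi(\xi)|\leq \kappa|\xi|^2 .
\]
Here $\kappa$ is controlled by the curvature of $\N$, and the graph is defined on a disk of uniform radius. Since all three vertices lie on $\N$ within distance $k_h$ of $p_0$, each has normal offset at most $\kappa k_h^2$. A point $y\in\tau$ is a convex combination of the vertices, so its normal offset over $T_{p_0}\N$ is also at most $\kappa k_h^2$. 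The graph point above the tangential coordinate of $y$ has offset at most $\kappa k_h^2$ as well. Hence $\operatorname{dist}(y,\N)\leq 2\kappa k_h^2$.

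\textbf{Jacobian bound.} For $y$ in the tubular neighborhood, write $y=\pi_\N(y)+t\,n_\N(\pi_\N(y))$ with $|t|=\operatorname{dist}(y,\N)$. The standard formula for the closest-point projection~\cite{DzEl13} gives
\[
D\pi_\N(y)=(I-tW)^{-1}P,\qquad P=I-n\otimes n,
\]
where $W$ is the Weingarten map at $\pi_\N(y)$ and $n=n_\N(\pi_\N(y))$. On the open triangle, $J_{\pi_\N|_\tau}(y)$ is the area Jacobian of the restriction of $D\pi_\N(y)$ to the plane $T_\tau$ of $\tau$, with the image taken in $T_{\pi_\N(y)}\N$. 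This restriction factors as $P|_{T_\tau}$ followed by $(I-tW)^{-1}$ on the tangent plane. The orthogonal projection $P$ of one plane onto another has area factor $|\cos\theta|\leq1$. The second factor has determinant
\[
\det(I-tW)^{-1}\leq 1+C|t|\leq 1+Ck_h^2,
\]
using $|t|\leq Ck_h^2$ from the distance bound. Multiplying the two factors gives the upper bound. The lower bound $0\leq J$ is immediate, since $J$ is the absolute value of a determinant. The a.e.\ qualifier only reflects that the edges have measure zero.

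\textbf{Main obstacle.} The key point is to avoid any estimate on the angle between $T_\tau$ and $T_{\pi_\N(y)}\N$. For a thin triangle this angle can be close to $\pi/2$; that is exactly the collapse in Figure~\ref{fig:thin_triangle}(b). The bound $|\cos\theta|\leq 1$ is what makes the estimate one-sided and free of shape regularity.

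One still has to check that $C$ depends only on $\N$. The graph radius and $\kappa$ are uniform by compactness. The factor $(I-tW)^{-1}$ is uniformly bounded once $|t|\sup|W|\leq \tfrac12$, which holds for $k_h$ small by the distance bound.
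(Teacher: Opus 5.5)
Your proof is correct. Both steps reach the bounds by a somewhat different route than the paper's.

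\textbf{Distance bound.} The paper Taylor-expands the signed distance $t_\N$ at the interior point $y$ toward each vertex, $0=t_\N(v_i)=t_\N(y)+\nabla t_\N(y)\cdot(v_i-y)+R_i$, and averages with the barycentric weights so the linear terms cancel. That argument is coordinate-free and needs only $t_\N\in C^2$ on the tubular neighborhood. Your graph-over-$T_{p_0}\N$ argument instead needs graph patches of uniform radius, and in exchange gives the explicit constant $2\kappa$. Both are really second-order Taylor plus convexity.

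\textbf{Jacobian bound.} The paper avoids the Weingarten formula entirely. It uses only $\mathrm D\pi_\N(p)=\Pi_p$ on $\N$ and the Lipschitz continuity of $\mathrm D\pi_\N$ to get $\|\mathrm D\pi_\N(y)\|\le 1+Ck_h^2$, then bounds $J\le\|\mathrm D\pi_\N(y)\|^2$. Your factorization $J_{\pi_\N|_\tau}(y)=|\cos\theta|\,\det(I-tW)^{-1}$ costs the explicit formula for $\mathrm D\pi_\N$, but it is sharper. It isolates the collapse factor $|\cos\theta|$ depicted in Figure~\ref{fig:thin_triangle}(b). It also shows that the only excess over $1$ comes from the normal offset $t=\O(k_h^2)$. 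The paper's operator-norm bound does not separate these two effects.
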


\begin{proof}
	Let $v_0,v_1,v_2\in\N$ be the vertices of $\tau$. The barycentric coordinates~\eqref{eq:bary_coord} give the identities:
	\begin{equation} \label{eq:thin_triangle_1}
	y=\sum_\ell\lambda_\ell v_\ell,
	\qquad \lambda_\ell\geq0,
	\qquad \sum_\ell\lambda_\ell=1.
	\end{equation}
	For sufficiently small $k_h$, $\tau$ lies in a fixed tubular neighborhood of $\N$. Since $v_0,v_1,v_2\in\N$, the signed distance \eqref{eq:patch_projection} on $\N$ is $t_\N(v_i) = 0$ for $i = 0,1,2$, and Taylor's expansion at $y$ gives
	\[
	0 = t_\N(v_i)
	= t_\N(y) + \nabla t_\N (y) \cdot (v_i-y)+R_i,
	\qquad |R_i| \leq Ck_h^2.
	\]
	Multiplying by $\lambda_i$ and summing gives
	\[
	0 = \sum_{i} \lambda_i t_\N(y) +  \sum_{i} \lambda_i \Big( \nabla t_\N (y) \cdot (v_i-y) \Big) +  \sum_{i} \lambda_i R_i.
	\]
	By \eqref{eq:thin_triangle_1}, the coefficient of the first term is 1 and the second term vanishes. Hence
	\begin{equation} \label{eq:thin_triangle_2}
	\operatorname{dist}(y,\N)=|t_\N(y)|=\Big|\sum_i\lambda_i R_i\Big|\leq Ck_h^2.
	\end{equation}

	For the Jacobian, let $p=\pi_\N(y) \in \N$. Since $\pi_\N$ fixes $\N$ and is constant along the normal through $p$, its differential at $p$ is the orthogonal projection onto $T_p\N$, i.e., $\mathrm D\pi_\N(p)=\Pi_p$. For $y$ in the tubular neighborhood, the mean value theorem and \eqref{eq:thin_triangle_2} give
	\[
	\|\mathrm D\pi_\N(y)-\Pi_p\|
	=\|\mathrm D\pi_\N(y)-\mathrm D\pi_\N(p)\|
	\leq C|y-p|\leq Ck_h^2.
	\]
	Since $\|\Pi_p\|=1$, it follows that
	\begin{equation} \label{eq:thin_triangle_3}
	\|\mathrm D\pi_\N(y)\|\leq1+Ck_h^2.
	\end{equation}
	Consequently, choosing $e_1,e_2$ to be an orthonormal basis of the plane of $\tau$, we obtain by \eqref{eq:thin_triangle_3}
	\[
	0\leq J_{\pi_\N|_\tau}(y)
	=|\mathrm D\pi_\N(y)e_1\times\mathrm D\pi_\N(y)e_2|
	\leq\|\mathrm D\pi_\N(y)\|^2
	\leq(1+Ck_h^2)^2\leq1+Ck_h^2.
	\]
\end{proof}

\subsection{Boundary correction}
\label{sec:open}
If $\partial\M = \varnothing$, then $\pi_\M(\Mh) = \M$ and $\pi_\N(\Nh) = \N$. Hence, we can transform the simplicial map into a homeomorphism by
\begin{equation*}
	\pi_\N \circ f_h \circ \pi_\M^{-1} :\M \to \N.
\end{equation*}
However, if $\partial\M \neq \varnothing$, $\pi_\M(\Mh) \neq \M$, so we require the additional correction maps $r_h$ and $s_h$ such that
\[
\begin{array}{ccccc}
	\Mh & \xrightarrow{\; \pi_\M \; } & \pi_\M(\Mh)  & \xrightarrow{\; r_h\; }  & \M  \\[2pt]
	{\scriptstyle f_h}\big\downarrow\ & & &&\\[2pt]
	\Nh & \xrightarrow{\; \pi_\N  \; } & \pi_\N(\Nh)  &   \xrightarrow{\; s_h\;}  & \N
\end{array}
\]
Hence, we let $p_h = r_h \circ \pi_\M$ and $q_h = s_h \circ \pi_\N$, and define the lifting operator $\mathrm{L}_h$ by
\begin{equation*}
	\mathrm{L}_h f_h
	:= q_h \circ f_h \circ p_h^{-1} \in\mathrm{Homeo}^+(\M,\N).
\end{equation*}

The projections $\pi_\M$ and $\pi_\N$ have two-sided and one-sided control of the Jacobian by Lemma~\ref{lem:area_change} and Lemma~\ref{lem:thin_triangle}, respectively. The correction maps $r_h$ and $s_h$ should keep the same order of error (Figure~\ref{fig:boundary_correction}). To do this, we first provide the following lemma.

\begin{figure}[tb]
\centering
\begin{tikzpicture}[font=\footnotesize]
\begin{scope}[x=1.3cm, y=1.3cm]
\fill[gray!12] (-1.60,-0.16) -- (-1.55,-0.13) -- (-1.49,-0.09) -- (-1.44,-0.06) -- (-1.39,-0.04) -- (-1.33,-0.02) -- (-1.28,-0.00) -- (-1.23,0.01) -- (-1.17,0.01) -- (-1.12,0.01) -- (-1.07,-0.00) -- (-1.01,-0.02) -- (-0.96,-0.04) -- (-0.91,-0.06) -- (-0.85,-0.09) -- (-0.80,-0.11) -- (-0.75,-0.14) -- (-0.69,-0.17) -- (-0.64,-0.19) -- (-0.59,-0.21) -- (-0.53,-0.22) -- (-0.48,-0.23) -- (-0.43,-0.23) -- (-0.37,-0.22) -- (-0.32,-0.20) -- (-0.27,-0.18) -- (-0.21,-0.15) -- (-0.16,-0.12) -- (-0.11,-0.08) -- (-0.05,-0.04) -- (0.00,0.00) -- (0.05,0.04) -- (0.11,0.08) -- (0.16,0.12) -- (0.21,0.15) -- (0.27,0.18) -- (0.32,0.20) -- (0.37,0.22) -- (0.43,0.23) -- (0.48,0.23) -- (0.53,0.22) -- (0.59,0.21) -- (0.64,0.19) -- (0.69,0.17) -- (0.75,0.14) -- (0.80,0.11) -- (0.86,0.16) -- (0.91,0.20) -- (0.97,0.23) -- (1.03,0.26) -- (1.09,0.29) -- (1.14,0.30) -- (1.20,0.31) -- (1.26,0.31) -- (1.31,0.30) -- (1.37,0.28) -- (1.43,0.26) -- (1.49,0.23) -- (1.54,0.20) -- (1.60,0.16) -- (1.6,-0.95) -- (-1.6,-0.95) -- cycle;
\draw[gray!65, densely dotted, line width=0.5pt] (-1.60,0.46) -- (-1.50,0.46) -- (-1.40,0.46) -- (-1.30,0.47) -- (-1.20,0.47) -- (-1.10,0.48) -- (-1.00,0.49) -- (-0.90,0.49) -- (-0.80,0.51) -- (-0.70,0.52) -- (-0.60,0.53) -- (-0.50,0.54) -- (-0.40,0.56) -- (-0.30,0.57) -- (-0.20,0.59) -- (-0.10,0.60) -- (0.00,0.62) -- (0.10,0.64) -- (0.20,0.65) -- (0.30,0.67) -- (0.40,0.68) -- (0.50,0.70) -- (0.60,0.71) -- (0.70,0.72) -- (0.80,0.73) -- (0.90,0.75) -- (1.00,0.75) -- (1.10,0.76) -- (1.20,0.77) -- (1.30,0.77) -- (1.40,0.78) -- (1.50,0.78) -- (1.60,0.78);
\draw[gray!65, densely dotted, line width=0.5pt] (-1.60,-0.78) -- (-1.50,-0.78) -- (-1.40,-0.78) -- (-1.30,-0.77) -- (-1.20,-0.77) -- (-1.10,-0.76) -- (-1.00,-0.75) -- (-0.90,-0.75) -- (-0.80,-0.73) -- (-0.70,-0.72) -- (-0.60,-0.71) -- (-0.50,-0.70) -- (-0.40,-0.68) -- (-0.30,-0.67) -- (-0.20,-0.65) -- (-0.10,-0.64) -- (0.00,-0.62) -- (0.10,-0.60) -- (0.20,-0.59) -- (0.30,-0.57) -- (0.40,-0.56) -- (0.50,-0.54) -- (0.60,-0.53) -- (0.70,-0.52) -- (0.80,-0.51) -- (0.90,-0.49) -- (1.00,-0.49) -- (1.10,-0.48) -- (1.20,-0.47) -- (1.30,-0.47) -- (1.40,-0.46) -- (1.50,-0.46) -- (1.60,-0.46);
\draw[black, densely dashed, line width=0.6pt] (-1.60,-0.16) -- (-1.50,-0.16) -- (-1.40,-0.16) -- (-1.30,-0.15) -- (-1.20,-0.15) -- (-1.10,-0.14) -- (-1.00,-0.13) -- (-0.90,-0.13) -- (-0.80,-0.11) -- (-0.70,-0.10) -- (-0.60,-0.09) -- (-0.50,-0.08) -- (-0.40,-0.06) -- (-0.30,-0.05) -- (-0.20,-0.03) -- (-0.10,-0.02) -- (0.00,0.00) -- (0.10,0.02) -- (0.20,0.03) -- (0.30,0.05) -- (0.40,0.06) -- (0.50,0.08) -- (0.60,0.09) -- (0.70,0.10) -- (0.80,0.11) -- (0.90,0.13) -- (1.00,0.13) -- (1.10,0.14) -- (1.20,0.15) -- (1.30,0.15) -- (1.40,0.16) -- (1.50,0.16) -- (1.60,0.16);
\draw[black, line width=0.9pt] (-1.60,-0.16) -- (-1.55,-0.13) -- (-1.49,-0.09) -- (-1.44,-0.06) -- (-1.39,-0.04) -- (-1.33,-0.02) -- (-1.28,-0.00) -- (-1.23,0.01) -- (-1.17,0.01) -- (-1.12,0.01) -- (-1.07,-0.00) -- (-1.01,-0.02) -- (-0.96,-0.04) -- (-0.91,-0.06) -- (-0.85,-0.09) -- (-0.80,-0.11) -- (-0.75,-0.14) -- (-0.69,-0.17) -- (-0.64,-0.19) -- (-0.59,-0.21) -- (-0.53,-0.22) -- (-0.48,-0.23) -- (-0.43,-0.23) -- (-0.37,-0.22) -- (-0.32,-0.20) -- (-0.27,-0.18) -- (-0.21,-0.15) -- (-0.16,-0.12) -- (-0.11,-0.08) -- (-0.05,-0.04) -- (0.00,0.00) -- (0.05,0.04) -- (0.11,0.08) -- (0.16,0.12) -- (0.21,0.15) -- (0.27,0.18) -- (0.32,0.20) -- (0.37,0.22) -- (0.43,0.23) -- (0.48,0.23) -- (0.53,0.22) -- (0.59,0.21) -- (0.64,0.19) -- (0.69,0.17) -- (0.75,0.14) -- (0.80,0.11) -- (0.86,0.16) -- (0.91,0.20) -- (0.97,0.23) -- (1.03,0.26) -- (1.09,0.29) -- (1.14,0.30) -- (1.20,0.31) -- (1.26,0.31) -- (1.31,0.30) -- (1.37,0.28) -- (1.43,0.26) -- (1.49,0.23) -- (1.54,0.20) -- (1.60,0.16);
\fill (-1.60,-0.16) circle (1.3pt);
\fill (-0.80,-0.11) circle (1.3pt);
\fill (0.00,0.00) circle (1.3pt);
\fill (0.80,0.11) circle (1.3pt);
\fill (1.60,0.16) circle (1.3pt);
\draw[decorate, decoration={brace, amplitude=3pt}, line width=0.4pt] (1.72,0.78) -- (1.72,-0.46) node[midway, right=3pt] {$U$};
\draw[gray!60, line width=0.3pt] (-1.2,0.01) -- (-1.45,0.95) node[anchor=south, inner sep=1pt, text=black] {$\pi_\M(\partial\Mh)$};
\draw[gray!60, line width=0.3pt] (0.4,0.06) -- (0.55,0.95) node[anchor=south, inner sep=1pt, text=black] {$\partial\M$};
\node at (0.0,-0.72) {$\pi_\M(\Mh)$};
\node at (0,-1.3) {(a) in $\M^e$};
\end{scope}
\draw[-{Latex[length=4pt]}, line width=0.5pt] (3.05,0.1) -- (3.85,0.1) node[midway, above] {$\Psi^{-1}$};
\begin{scope}[xshift=6.45cm, x=1.3cm, y=1.3cm]
\fill[gray!12] (-1.60,0.00) -- (-1.55,0.04) -- (-1.49,0.07) -- (-1.44,0.11) -- (-1.39,0.13) -- (-1.33,0.16) -- (-1.28,0.17) -- (-1.23,0.18) -- (-1.17,0.18) -- (-1.12,0.17) -- (-1.07,0.16) -- (-1.01,0.13) -- (-0.96,0.11) -- (-0.91,0.07) -- (-0.85,0.04) -- (-0.80,-0.00) -- (-0.75,-0.04) -- (-0.69,-0.07) -- (-0.64,-0.11) -- (-0.59,-0.13) -- (-0.53,-0.16) -- (-0.48,-0.17) -- (-0.43,-0.18) -- (-0.37,-0.18) -- (-0.32,-0.17) -- (-0.27,-0.16) -- (-0.21,-0.13) -- (-0.16,-0.11) -- (-0.11,-0.07) -- (-0.05,-0.04) -- (0.00,0.00) -- (0.05,0.04) -- (0.11,0.07) -- (0.16,0.11) -- (0.21,0.13) -- (0.27,0.16) -- (0.32,0.17) -- (0.37,0.18) -- (0.43,0.18) -- (0.48,0.17) -- (0.53,0.16) -- (0.59,0.13) -- (0.64,0.11) -- (0.69,0.07) -- (0.75,0.04) -- (0.80,0.00) -- (0.86,0.04) -- (0.91,0.08) -- (0.97,0.11) -- (1.03,0.14) -- (1.09,0.16) -- (1.14,0.18) -- (1.20,0.18) -- (1.26,0.18) -- (1.31,0.16) -- (1.37,0.14) -- (1.43,0.11) -- (1.49,0.08) -- (1.54,0.04) -- (1.60,0.00) -- (1.6,0.62) -- (-1.6,0.62) -- cycle;
\draw[gray!65, densely dotted, line width=0.5pt] (-1.6,0.62) -- (1.6,0.62);
\draw[gray!65, densely dotted, line width=0.5pt] (-1.6,-0.62) -- (1.6,-0.62);
\draw[black, densely dashed, line width=0.6pt] (-1.6,0) -- (1.6,0);
\draw[black, line width=0.9pt] (-1.60,0.00) -- (-1.55,0.04) -- (-1.49,0.07) -- (-1.44,0.11) -- (-1.39,0.13) -- (-1.33,0.16) -- (-1.28,0.17) -- (-1.23,0.18) -- (-1.17,0.18) -- (-1.12,0.17) -- (-1.07,0.16) -- (-1.01,0.13) -- (-0.96,0.11) -- (-0.91,0.07) -- (-0.85,0.04) -- (-0.80,-0.00) -- (-0.75,-0.04) -- (-0.69,-0.07) -- (-0.64,-0.11) -- (-0.59,-0.13) -- (-0.53,-0.16) -- (-0.48,-0.17) -- (-0.43,-0.18) -- (-0.37,-0.18) -- (-0.32,-0.17) -- (-0.27,-0.16) -- (-0.21,-0.13) -- (-0.16,-0.11) -- (-0.11,-0.07) -- (-0.05,-0.04) -- (0.00,0.00) -- (0.05,0.04) -- (0.11,0.07) -- (0.16,0.11) -- (0.21,0.13) -- (0.27,0.16) -- (0.32,0.17) -- (0.37,0.18) -- (0.43,0.18) -- (0.48,0.17) -- (0.53,0.16) -- (0.59,0.13) -- (0.64,0.11) -- (0.69,0.07) -- (0.75,0.04) -- (0.80,0.00) -- (0.86,0.04) -- (0.91,0.08) -- (0.97,0.11) -- (1.03,0.14) -- (1.09,0.16) -- (1.14,0.18) -- (1.20,0.18) -- (1.26,0.18) -- (1.31,0.16) -- (1.37,0.14) -- (1.43,0.11) -- (1.49,0.08) -- (1.54,0.04) -- (1.60,0.00);
\fill (-1.60,0) circle (1.3pt);
\fill (-0.80,0) circle (1.3pt);
\fill (0.00,0) circle (1.3pt);
\fill (0.80,0) circle (1.3pt);
\fill (1.60,0) circle (1.3pt);
\draw[-{Latex[length=3.5pt,width=2.6pt]}, line width=0.5pt] (-1.20,0.17) -- (-1.20,0.03);
\draw[-{Latex[length=3.5pt,width=2.6pt]}, line width=0.5pt] (-0.40,-0.17) -- (-0.40,-0.03);
\draw[-{Latex[length=3.5pt,width=2.6pt]}, line width=0.5pt] (0.40,0.17) -- (0.40,0.03);
\draw[-{Latex[length=3.5pt,width=2.6pt]}, line width=0.5pt] (1.20,0.17) -- (1.20,0.03);
\draw[-{Latex[length=3.5pt]}, line width=0.4pt] (-1.75,-0.72) -- (-1.75,0.82) node[anchor=south] {$t$};
\draw[-{Latex[length=3.5pt]}, line width=0.4pt] (-1.75,-0.72) -- (1.8,-0.72) node[anchor=west] {$s$};
\node[anchor=east, text=gray!50!black] at (-1.75,0.62) {$\delta$};
\node[anchor=east, text=gray!50!black] at (-1.75,-0.62) {$-\delta$};
\node[anchor=west] at (1.62,0) {$t=0$};
\draw[gray!60, line width=0.3pt] (-0.25,-0.15) -- (-0.05,-0.4) node[anchor=west, inner sep=1pt, text=black] {$t=\beta_h(s)$};
\node at (0,-1.3) {(b) in coordinates $(s,t)$};
\end{scope}
\end{tikzpicture}
\caption{The boundary correction: (a) the lifted boundary (solid) near $\partial\M$ (dashed) in $U$; (b) $P_h$ pushes $t=\beta_h(s)$ to $t=0$.}
\label{fig:boundary_correction}
\end{figure}

\begin{lemma}\label{lem:ass_4_guarantee_1}
	Let $\M$ be a compact smooth surface with $\partial \M \neq \varnothing$ and let $\Mh$ be the triangular mesh approximating $\M$ satisfying Assumption~\ref{ass:source}. For sufficiently small $h$, $\partial\M$ and $\pi_{\M}(\partial\Mh)$ differ by $\O(h^2)$ in position and $\O(h)$ in tangential direction.
\end{lemma}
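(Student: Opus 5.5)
The plan is to reduce the statement to a chord-versus-arc estimate on each boundary edge. By Assumption~\ref{ass:source}(i) and (iv), every boundary edge $e=[v_0,v_1]$ of $\Mh$ has endpoints on $\partial\M$, and these endpoints are joined by an arc $\gamma_e\subset\partial\M$. The arcs $\gamma_e$ cover $\partial\M$ exactly once. So it suffices to compare, edge by edge, the projected segment $\pi_\M(e)$ with $\gamma_e$. I will do this uniformly in $e$, using constants depending only on $\M$ (through a fixed extension $\M^e$ and its tubular neighborhood).

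\textbf{Position.} I will parametrize $\gamma_e$ by arclength, $\gamma:[0,L]\to\partial\M$, with $\gamma(0)=v_0$ and $\gamma(L)=v_1$.
\begin{itemize}
\item First I need $L\le C|v_1-v_0|\le Ch$. Since $\partial\M$ is a compact smooth curve, chord and arc length are comparable at small scales. The remaining point is that $\gamma_e$ is the short arc, which holds because the arcs cover $\partial\M$ exactly once and there are many edges as $h\to0$. I expect this to be the main obstacle. I would try to justify it from (ii)--(iv): $\pi_\M(\partial\Mh)$ is a closed curve consisting of $O(h)$-length pieces with the same endpoints, and the one-to-one covering should force each arc to connect consecutive boundary vertices.
\item Next, the point $x(s)=(1-s/L)v_0+(s/L)v_1$ of the chord satisfies $|x(s)-\gamma(s)|\le C\|\gamma''\|_\infty L^2\le Ch^2$, by the standard linear-interpolation error estimate.
\item Since $\pi_\M$ is Lipschitz on the tubular neighborhood of $\M^e$ and fixes $\gamma(s)\in\M$, this gives $|\pi_\M(x(s))-\gamma(s)|\le Ch^2$.
\end{itemize}
This bounds the Hausdorff distance between $\pi_\M(e)$ and $\gamma_e$ by $O(h^2)$, and the bound is uniform along a common parametrization.

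\textbf{Tangents.} The tangent of $\pi_\M(e)$ at $\pi_\M(x(s))$ is $\mathrm D\pi_\M(x(s))(v_1-v_0)/L$. I will compare it with $\gamma'(s)$ in three steps.
\begin{itemize}
\item By the mean value theorem, $(v_1-v_0)/L=\gamma'(s^*)$ componentwise up to $O(L)$, so $|(v_1-v_0)/L-\gamma'(s)|\le C L\le Ch$.
\item As in the proof of Lemma~\ref{lem:thin_triangle}, $\mathrm D\pi_\M$ at a point of $\M$ is the orthogonal projection onto the tangent plane. Hence $\|\mathrm D\pi_\M(x(s))-\Pi_{\gamma(s)}\|\le C|x(s)-\gamma(s)|\le Ch^2$.
\item Finally $\Pi_{\gamma(s)}\gamma'(s)=\gamma'(s)$.
\end{itemize}
Combining these, the unit tangents differ by $O(h)$ after normalization; normalization is harmless since $|\gamma'|=1$.

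To conclude, I will take the union over the boundary edges. The pieces $\pi_\M(e)$ concatenate to $\pi_\M(\partial\Mh)$ and the arcs $\gamma_e$ concatenate to $\partial\M$, by (ii) and (iv). The piecewise parametrizations therefore glue to a single map $\partial\M\to\pi_\M(\partial\Mh)$ with $O(h^2)$ displacement and $O(h)$ tangent deviation, which is the claim. The argument uses only the smoothness of $\partial\M$ and the fact that edges have length at most $h$. In particular, it does not use shape regularity (iii) in any essential way.
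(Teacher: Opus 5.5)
Your proposal is correct and is essentially the paper's proof. The paper uses the same four steps: the arclength Taylor estimate of each boundary chord against its arc; Lipschitz continuity of $\pi_\M$ together with $\pi_\M(\gamma(u))=\gamma(u)$ for the $O(h^2)$ position bound; the splitting with $\mathrm D\pi_\M(\gamma)\gamma'=\gamma'$ and Lipschitz $\mathrm D\pi_\M$ for the $O(h)$ tangent bound; and Assumption~\ref{ass:source}(iv) to glue the edges.

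The step you flag, $\ell\le Ch$, is simply asserted in the paper, so at the paper's level of rigor your argument is complete. Your suggested justification via the exact-once covering cannot work by itself, though. Take the planar unit disk and a single triangle with vertices $e^{i0},e^{ih/2},e^{ih}$ on its boundary circle, attaching the long arc to the edge $[e^{ih},e^{i0}]$; this satisfies (i), (ii) and (iv), yet the lemma fails. So ruling out clustered boundary vertices needs an extra ingredient such as the shape regularity (iii), contrary to your closing remark that (iii) is inessential.
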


\begin{proof}
	We first compare a boundary edge of $\Mh$ with the corresponding smooth arc of $\partial\M$. We choose an edge between two adjacent boundary vertices $v_0$ and $v_1$ and parameterize the
	corresponding arc of $\partial\M$ by arclength
	\[
	\gamma:[0,\ell]\to\partial\M, \qquad
	|\gamma'(u)|=1, \qquad \gamma(0)=v_0, \qquad \gamma(\ell)=v_1,
	\]
	where $\ell\leq Ch$ for some $C>0$. We write the boundary edge as the chord
	\begin{equation}\label{eq:ass_4_guarantee_1}
		c(u)=\gamma(0)+\frac{u}{\ell}\big(\gamma(\ell)-\gamma(0)\big),
		\qquad 0\leq u\leq\ell.
	\end{equation}
	Taylor's expansion of $\gamma$ at $u=0$ gives
	\begin{equation}\label{eq:ass_4_guarantee_2}
		\gamma(u)=\gamma(0)+u\,\gamma'(0)+R(u),
		\qquad |R(u)|\leq Cu^2.
	\end{equation}
	Evaluating \eqref{eq:ass_4_guarantee_2} at $u=\ell$ and substituting into 	\eqref{eq:ass_4_guarantee_1}, we obtain
	\begin{align*}
		c(u)
		=\gamma(0)+\frac{u}{\ell}\big(\gamma(\ell)-\gamma(0)\big)
		=\gamma(0)+u\,\gamma'(0)+\frac{u}{\ell}R(\ell).
	\end{align*}
	Hence, the position difference is
	\begin{equation}\label{eq:boundary_estimate_3}
		|c(u)-\gamma(u)|
		=\left|\frac{u}{\ell}R(\ell)-R(u)\right|
		\leq\frac{u}{\ell}|R(\ell)|+|R(u)|
		\leq Cu\ell+Cu^2\leq Ch^2
	\end{equation}
	for all $0\leq u\leq\ell$. In addition, taking derivatives of \eqref{eq:ass_4_guarantee_1} gives
	\[
	|c'(u) - \gamma'(0)| = |R(\ell) / \ell| \leq C\ell.
	\]
	Since $\gamma''$ is uniformly bounded, we obtain
	\begin{equation}\label{eq:ass_4_guarantee_4}
	|c'(u)-\gamma'(u)| \leq |c'(u) - \gamma'(0)| + |\gamma'(u)-\gamma'(0)| \leq C \ell \leq Ch.
	\end{equation}

	Next, we project the boundary edge by the closest-point projection onto the smooth extension $\M^e$, denoted by $\pi_\M$. Since $\gamma(u)\in\partial\M\subset\M^e$, we have
	\begin{equation}\label{eq:ass_4_guarantee_3}
	\pi_{\M}(\gamma(u))=\gamma(u), \qquad
	\mathrm{D}\pi_{\M}(\gamma(u))\,\gamma'(u)=\gamma'(u).
	\end{equation}
	The smoothness of $\pi_{\M}$ and the left identity of \eqref{eq:ass_4_guarantee_3} give the position estimate
	\[
	\big|\pi_{\M}(c(u))-\gamma(u)\big|
	\leq C|c(u)-\gamma(u)|\leq C\ell^2\leq Ch^2.
	\]
	For the derivative, the chain rule gives
	\[
	\frac{\d}{\d u}\pi_{\M}(c(u))
	=\mathrm{D}\pi_{\M}(c(u))\,c'(u).
	\]
	Thus, by the right identity of \eqref{eq:ass_4_guarantee_3}
	\begin{align*}
		\left|\frac{\d}{\d u}\pi_{\M}(c)-\gamma'(u)\right|
		&\leq\big\|\mathrm{D}\pi_{\M}(c)
			-\mathrm{D}\pi_{\M}(\gamma)\big\|\,|c'(u)|
		+\big\|\mathrm{D}\pi_{\M}(\gamma)\big\|
		|c'(u)-\gamma'(u)|.
	\end{align*}
	Applying the smoothness of $\mathrm{D} \pi_\M$ and the mean value theorem to the first term, and \eqref{eq:ass_4_guarantee_4} to the second term, we obtain
	\[
	\left|\frac{\d}{\d u}\pi_{\M}(c)-\gamma'(u)\right| \leq Ch^2 + C'h \leq C''h.
	\]

	Finally, by Assumption~\ref{ass:source} (iv), applying the above arguments to all arcs $\gamma$ that join the endpoints of the boundary edges of $\Mh$ concludes the proof.
\end{proof}

This gives a proper correction map for the surface $\M$.

\begin{lemma} \label{lem:bdry_correct}
	Let $\M$ be a compact smooth surface with $\partial \M \neq \varnothing$ and let $\Mh$ be the triangular mesh approximating $\M$ satisfying Assumption~\ref{ass:source}. For sufficiently small $h$,
	\begin{enumerate}
		\item the closest-point projection of the boundary
		\[
		\pi_{\partial \M} : \partial \big(\pi_\M(\Mh) \big)\to \partial \M
		\]
		is a homeomorphism.
		\item there exists $r_h$ such that $p_h = r_h \circ \pi_\M$ satisfies
		\begin{equation}\label{eq:correction}
		  \|p_h-\mathrm{id}\|_{L^\infty(\Mh)}\leq Ch^2,
		  \qquad |J_{p_h}-1|\leq Ch^2.
		\end{equation}
	\end{enumerate}
\end{lemma}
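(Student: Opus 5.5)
We work in Fermi (collar) coordinates of $\partial\M$ inside the extension $\M^e$, as in Figure~\ref{fig:boundary_correction}. Let $\Psi:(s,t)\mapsto \exp_{\gamma(s)}\big(t\,\nu(s)\big)$, where $\gamma$ is an arclength parametrization of $\partial\M$ and $\nu$ the inward unit conormal in $\M^e$. For a fixed small $\delta>0$, $\Psi$ is a smooth diffeomorphism from $\partial\M\times(-\delta,\delta)$ onto a collar $U\subset\M^e$. Its Jacobian $J_\Psi$ is smooth and equals $1$ on $t=0$, and $\partial\M=\{t=0\}$ with $\M\cap U=\{t\ge0\}$.

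\emph{Step 1 (graph representation and part 1).} By Lemma~\ref{lem:ass_4_guarantee_1}, every point of $\pi_\M(\partial\Mh)$ lies within $Ch^2$ of $\partial\M$. Its unit tangent is $\O(h)$-close to the tangent of $\partial\M$ at the corresponding point. In coordinates, each projected edge $\Psi^{-1}\circ\pi_\M\circ c$ is therefore a curve $u\mapsto(s(u),t(u))$ with $|t|\le Ch^2$ and $|t'|\le Ch$, $|s'-1|\le Ch$. Hence $s$ is strictly increasing along each edge, so each projected edge is a graph $t=\beta_h(s)$ over its arc. By Assumption~\ref{ass:source}(iv) these arcs cover $\partial\M$ exactly once, and adjacent edges share the vertices, where $\beta_h=0$. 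So $\partial(\pi_\M(\Mh))=\pi_\M(\partial\Mh)$ is the graph of a continuous, piecewise-smooth $\beta_h$ on $\partial\M$ with
\[
\|\beta_h\|_{L^\infty}\le Ch^2,\qquad \|\beta_h'\|_{L^\infty}\le Ch .
\]
Here we use that $\pi_\M|_{\partial\Mh}$ is injective by Assumption~\ref{ass:source}(ii). The closest-point projection onto $\partial\M$ agrees, up to the $\O(h^2)$-small normal tilt, with $(s,\beta_h(s))\mapsto(s,0)$. A slightly cleaner choice is to \emph{define} $\pi_{\partial\M}$ via the collar map $\Psi(s,t)\mapsto\Psi(s,0)$, which for the geodesic collar is exactly the nearest-point map in $\M^e$. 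This gives a continuous bijection between compact sets, hence a homeomorphism, proving part 1.

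\emph{Step 2 (construction of $r_h$).} Fix a smooth cutoff $\zeta:\R\to[0,1]$ with $\zeta=1$ on $[-\delta/3,\delta/3]$, $\zeta=0$ outside $(-2\delta/3,2\delta/3)$, and $|\zeta'|\le C$. Define in coordinates
\[
P_h(s,t)=\big(s,\;t-\zeta(t)\,\beta_h(s)\big),
\]
and set $r_h=\Psi\circ P_h\circ\Psi^{-1}$ on $U$, with $r_h=\mathrm{id}$ elsewhere. The $t$-derivative of the second component is $1-\zeta'(t)\beta_h(s)\ge1-Ch^2>0$, so $P_h$ is strictly monotone in $t$ on each fiber and hence a homeomorphism of the collar fixing its outer parts. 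Since $\zeta=1$ near $t=0$, $P_h$ sends $t=\beta_h(s)$ to $t=0$ and $\{t\ge\beta_h\}$ onto $\{t\ge0\}$. Consequently $r_h$ maps $\pi_\M(\Mh)$ homeomorphically onto $\M$, and it restricts to $\pi_{\partial\M}$ on the boundary.

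\emph{Step 3 (estimates).} For the displacement, $|P_h-\mathrm{id}|\le\|\beta_h\|_\infty\le Ch^2$, so $|r_h-\mathrm{id}|\le Ch^2$ by the smoothness of $\Psi$. Lemma~\ref{lem:area_change} gives $|\pi_\M(x)-x|=|t_\M(x)|\le Ch^2$, and the triangle inequality then yields $\|p_h-\mathrm{id}\|_{L^\infty}\le Ch^2$.

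For the Jacobian, $P_h$ is triangular, so a.e.
\[
\det DP_h=1-\zeta'(t)\beta_h(s)=1+\O(h^2).
\]
The term $\zeta(t)\beta_h'(s)$ sits off the diagonal and does not enter the determinant. Thus $J_{r_h}=\dfrac{J_\Psi(P_h(s,t))}{J_\Psi(s,t)}\det DP_h$. The ratio $J_\Psi(P_h)/J_\Psi$ is $1+\O(h^2)$ because $J_\Psi$ is Lipschitz and $|P_h-\mathrm{id}|\le Ch^2$. Multiplying by $J_{\pi_\M}=1+\O(h^2)$ from Lemma~\ref{lem:area_change} gives $|J_{p_h}-1|\le Ch^2$.

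The main obstacle is Step 1, because Lemma~\ref{lem:ass_4_guarantee_1} gives only an $\O(h)$ tangential bound. Without care this would suggest an $\O(h)$ Jacobian error. The triangular form of $P_h$ is what avoids it: it moves points only along the $t$-fibres, so $\beta_h'$ affects only the shear entry. The only delicate points are checking that the graph representation is global, which needs monotonicity of $s$ on each edge and the exact-once covering of Assumption~\ref{ass:source}(iv), and noting that $\beta_h$ is merely Lipschitz, so the Jacobian bound holds almost everywhere.
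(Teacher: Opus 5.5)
Your proposal is correct and follows the paper's proof essentially step for step: collar coordinates $(s,t)$ near $\partial\M$, the graph representation $t=\beta_h(s)$ with $|\beta_h|\le Ch^2$ and $|\beta_h'|\le Ch$ obtained from Lemma~\ref{lem:ass_4_guarantee_1} and Assumption~\ref{ass:source}(iv), the cutoff correction $P_h(s,t)=(s,t-\zeta(t)\beta_h(s))$ whose lower-triangular Jacobian keeps the $\mathrm{O}(h)$ slope out of the determinant, and the chain rule with Lemma~\ref{lem:area_change}. Your extra checks are details the paper leaves implicit: the fibrewise monotonicity of $P_h$, which ensures that $r_h$ maps $\pi_\M(\Mh)$ homeomorphically onto $\M$, and the observation that the Jacobian bound holds only almost everywhere because $\beta_h$ is merely Lipschitz.
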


\begin{proof}
We parameterize a neighborhood $U$ of $\partial \M$ in the smooth extension $\M^e$ by
\[
\Psi: \left\{ (s,t)\,|\, |t| \leq \delta \right\} \to U \subset \M^e,
\]
where $s$ is the arclength coordinate of $\pi_{\partial\M}(x)$ and $t$ is the signed distance to $\partial\M$, positive inside $\M$. It is a diffeomorphism for sufficiently small $\delta > 0$. Applying $\Phi := \Psi^{-1}\circ\pi_{\M}$ to the arc $\gamma$ and the chord $c$, the arc $\gamma$ and the lifted edge $\pi_{\M}(c)$ in the coordinates $(s,t)$ are
\begin{align*}
	\Phi(\gamma(u)) = (s_0 + u, 0), \qquad
	\Phi(c(u))=(s(u),t(u)),
\end{align*}
where $s_0$ is the coordinate of $\gamma(0)$. Since $\pi_{\partial\M}$ is the projection $(s,t)\mapsto(s,0)$ in these coordinates, to show its bijectivity, it suffices to show that the lifted boundary is a graph $t=\beta_h(s)$ over $\partial\M$.

By the smoothness of $\Phi$ and \eqref{eq:boundary_estimate_3},
\[
|t(u)| \leq
\big\|(s(u),t(u))-(s_0+u,0)\big\|\le C\|c(u)-\gamma(u)\|\le C h^2 .
\]
For the derivatives, the smoothness of $\Phi$, \eqref{eq:boundary_estimate_3}, and \eqref{eq:ass_4_guarantee_4} give
\begin{align*}
	\|(s'(u), t'(u)) - (1,0)\| &=
	\Big\|\frac{\mathrm d}{\mathrm du}\Phi(c(u))-\frac{\mathrm d}{\mathrm du}\Phi(\gamma(u))\Big\|\\
	&=\Big\|\big(\mathrm D\Phi(c(u))-\mathrm D\Phi(\gamma(u))\big)\,c'(u)+\mathrm D\Phi(\gamma(u))\,\big(c'(u)-\gamma'(u)\big)\Big\|\\
	&\leq Ch^2 + Ch \leq Ch
\end{align*}
and thus
\[
|s'(u)-1|\leq Ch, \qquad |t'(u)|\leq Ch.
\]
Hence, for sufficiently small $h$, $s(u)$ is strictly increasing, and its inverse is well defined. Each lifted boundary edge is locally a graph $t=\beta_h(s)$ satisfying
\begin{equation}\label{eq:cutoff_boundary_2}
|\beta_h(s)|\leq Ch^2, \qquad |\beta_h'(s)|=|t'(u)/s'(u)|\leq Ch.
\end{equation}
Moreover, since $\pi_{\M}$ fixes the vertices $\gamma(0)$ and $\gamma(\ell)$, we have $s(0)=s_0$ and $s(\ell)=s_0+\ell$, so each lifted edge is the graph of $\beta_h$ over its own arc. By Assumption~\ref{ass:source} (iv), $\partial\big(\pi_\M(\Mh)\big)$ is the graph of $\beta_h$ over $\partial\M$, and hence $\pi_{\partial\M}$ is a homeomorphism.

Next, we construct the correction map $r_h$ in the coordinates $(s,t)$, which pushes the graph $t=\beta_h(s)$ down to $t=0$. To restrict this correction to a neighborhood of the boundary, we choose a smooth, compactly supported cutoff function $\zeta$ such that $0\leq\zeta\leq1$ and
\[
\zeta(t)=
\begin{cases}
	1 & \text{for }t\text{ near }0,\\
	0 & \text{for }|t|\text{ near }\delta.
\end{cases}
\]
Hence, the correction in the coordinates $(s,t)$ is
\begin{equation}\label{eq:cutoff_boundary}
	P_h(s,t)=\big(s,t-\zeta(t)\beta_h(s)\big),
\end{equation}
which maps $(s,\beta_h(s))$ to $(s,0)$ and is the identity outside $U$. Thus, the position error satisfies
\[
\|P_h-\mathrm{id}\|_{L^\infty}
\leq\|\beta_h\|_{L^\infty}\leq Ch^2.
\]

For the Jacobian estimate, the Jacobian matrix of $P_h$ is
\begin{equation}\label{eq:cutoff_boundary_Jacobian}
\mathrm{D}P_h=
\begin{bmatrix}
	1 & 0\\
	-\zeta(t)\beta_h'(s) & 1-\zeta'(t)\beta_h(s)
\end{bmatrix}.
\end{equation}
Write the area element near $\partial \M$ as
\[
\d A=w(s,t)\,\d s\,\d t,
\]
where $w$ is smooth and bounded away from zero. The correction changes the area element to
\[
\d A'=w\big(s,t-\zeta(t)\beta_h(s)\big)
\det(\mathrm{D}P_h)\,\d s\,\d t.
\]
Thus, substituting \eqref{eq:cutoff_boundary_Jacobian} into the area Jacobian gives
\begin{align*}
	J_{P_h}
	=\frac{w\big(s,t-\zeta(t)\beta_h(s)\big)}{w(s,t)}
	\det(\mathrm{D}P_h).
\end{align*}
By the mean value theorem and $|\beta_h|\leq Ch^2$, the first factor is bounded by
\[
\left|w\big(s,t-\zeta(t)\beta_h(s)\big)-w(s,t)\right|
=\left|\partial_t w(s,\xi)\,\zeta(t)\beta_h(s)\right|\leq Ch^2,
\]
where $\xi$ lies between $t$ and $t-\zeta(t)\beta_h(s)$. In addition, by \eqref{eq:cutoff_boundary_Jacobian}, the second factor is bounded by
\[
|\det(\mathrm{D}P_h) - 1| = |\zeta'(t)\beta_h(s)|\leq Ch^2.
\]
Since $w$ is bounded away from zero, we obtain
\begin{equation}\label{eq:cutoff_boundary_1}
	1- C''h^2 \leq \bigg( \frac{w - Ch^2}{w} \bigg)  \big( 1 - Ch^2 \big) \leq
	J_{P_h} \leq \bigg( \frac{w + Ch^2}{w} \bigg)  \big( 1 + Ch^2 \big) \leq 1 + C'h^2.
\end{equation}

Consequently, we let $r_h = \Psi\circ P_h \circ \Psi^{-1}$ on $U$, extended by the identity outside $U$, and $p_h=r_h\circ\pi_{\M}$. For $x\in U$ and $z=\Psi^{-1}(x)$, by \eqref{eq:cutoff_boundary_2},
\[
|r_h(x)-x| = |\Psi(P_h(z))-\Psi(z)|
\leq\|\mathrm{D}\Psi \| \, |P_h(z) - z|=\|\mathrm D\Psi\|\,\zeta|\beta_h|\le Ch^2.
\]
Therefore, combining the estimates with Lemma~\ref{lem:thin_triangle},
\[
|p_h(x)-x|\leq|r_h(\pi_{\M}(x))-\pi_{\M}(x)|+|\pi_{\M}(x)-x|\leq Ch^2.
\]
Finally, by \eqref{eq:cutoff_boundary_1},
\[
|J_{r_h} \circ \Psi - 1|  = |J_{P_h} -  1| \leq Ch^2,
\]
and thus, by the chain rule and Lemma~\ref{lem:area_change},
\begin{equation}\label{eq:cutoff_boundary_3}
|J_{p_h}(x) - 1| = |J_{r_h}(\pi_\M(x))\,J_{\pi_\M|_\tau}(x) - 1| \leq |Ch^2 + C'h^2 + C C'h^2| \leq C''h^2. \qedhere
\end{equation}
\end{proof}

\subsection{Admissible maps and energy comparison}
\label{sec:lift_def}
For the image mesh $\Nh:=f_h(\Mh)$, since we do not require the image triangles to be shape-regular, as mentioned in Section~\ref{sec:close}, we remove the shape-regular condition and use the following assumption.

\begin{assumption}[Admissible maps]\label{ass:admissible}
	We consider discrete maps $f_h:\Mh\to\Nh=f_h(\Mh)$ that are orientation-preserving piecewise-linear homeomorphisms such that
	\begin{itemize}
		\item[(i)] the image vertices lie on $\N$, and the image boundary vertices lie on $\partial\N$;
		\item[(ii)] $\pi_\N|_{\Nh}$ is a homeomorphism onto its image, which is $\N$ if $\partial\N=\varnothing$;
		\item[(iii)] if $\partial\N\neq\varnothing$, the arcs of $\partial\N$ joining the endpoints of the boundary edges of $\Nh$ cover $\partial\N$ exactly once;
		\item[(iv)] the image mesh size satisfies $k_h\leq\eta_h$, where
		\[
		\eta_h\to0, \qquad \frac{h}{\eta_h}\to0 \qquad\text{as }h\to0.
		\]
	\end{itemize}
\end{assumption}
To simplify the notation, we denote
\begin{equation} \label{eq:admissible_set}
	\A_h = \left\{ f_h:\Mh\to\Nh \;\middle|\; f_h \text{ satisfies Assumption~\ref{ass:admissible}}\right\}.
\end{equation}
Following the proof of Lemma~\ref{lem:bdry_correct} and replacing Lemma~\ref{lem:area_change} with Lemma~\ref{lem:thin_triangle}, we have the following corollary.

\begin{corollary} \label{cor:bdry_correct_map}
	Let $\Mh$ be the triangular mesh approximating $\M$ satisfying Assumption~\ref{ass:source}. Suppose $\partial \N \neq \varnothing$ and $f_h \in \A_h$ as in \eqref{eq:admissible_set}. For sufficiently small $h$,
	\begin{enumerate}
		\item the closest-point projection of the boundary
		\[
		\pi_{\partial \N} : \partial \big(\pi_\N(\Nh) \big)\to \partial \N
		\]
		is a homeomorphism.
		\item there exists $s_h$ such that $q_h = s_h \circ \pi_\N$ satisfies
		\begin{equation}\label{eq:target_estimate}
			\|q_h-\mathrm{id}\|_{L^\infty(\Nh)}\leq Ck_h^2,
			\qquad 0\leq J_{q_h}\leq 1+Ck_h^2.
		\end{equation}
	\end{enumerate}
\end{corollary}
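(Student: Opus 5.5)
We mirror the proof of Lemma~\ref{lem:bdry_correct} with $\M,\Mh,h$ replaced by $\N,\Nh,k_h$. Wherever that proof used the two-sided estimate of Lemma~\ref{lem:area_change} for $\pi_\M$, we use the one-sided estimate of Lemma~\ref{lem:thin_triangle} instead. The boundary steps do not need shape regularity, because a boundary edge of $\Nh$ is a chord between two consecutive vertices on $\partial\N$. By Assumption~\ref{ass:admissible}(i),(iii), these vertices lie on $\partial\N$ and the arcs joining them cover $\partial\N$ exactly once. The length of each chord is at most $k_h$, and the corresponding arc has length at most $Ck_h$ for small $k_h$, since arc length and chord length are comparable on a smooth compact curve. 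Hence the Taylor argument of Lemma~\ref{lem:ass_4_guarantee_1} gives verbatim that $\pi_\N(\partial\Nh)$ and $\partial\N$ differ by $\O(k_h^2)$ in position and $\O(k_h)$ in tangent direction. The constants depend only on the curvature of $\partial\N$ and on the smooth extension $\N^e$. Since $k_h\leq\eta_h\to0$, ``sufficiently small $h$'' makes $k_h$ small.

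For item 1 we use the collar coordinates $\Psi:(s,t)\mapsto U\subset\N^e$ around $\partial\N$. Arguing as in Lemma~\ref{lem:bdry_correct}, each projected edge, written in $(s,t)$ coordinates, satisfies $|s'-1|\leq Ck_h$ and $|t'|\leq Ck_h$. It is therefore a graph $t=\beta_h(s)$ with $|\beta_h|\leq Ck_h^2$ and $|\beta_h'|\leq Ck_h$. The projected vertices are fixed, so the graph pieces over consecutive arcs match up. Assumption~\ref{ass:admissible}(iii) then shows that $\partial(\pi_\N(\Nh))$ is a single graph over $\partial\N$. Hence $\pi_{\partial\N}$, which is $(s,t)\mapsto(s,0)$ in these coordinates, is a homeomorphism. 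Assumption~\ref{ass:admissible}(ii) guarantees that $\partial(\pi_\N(\Nh))=\pi_\N(\partial\Nh)$, since $\pi_\N|_{\Nh}$ is a homeomorphism onto its image.

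For item 2 we set $s_h=\Psi\circ P_h\circ\Psi^{-1}$ on $U$ and extend it by the identity elsewhere, where $P_h(s,t)=(s,t-\zeta(t)\beta_h(s))$ is the same cutoff correction. Exactly as in \eqref{eq:cutoff_boundary_1}, this gives $\|s_h-\mathrm{id}\|_\infty\leq Ck_h^2$ and $|J_{s_h}-1|\leq Ck_h^2$. Here $J_{s_h}>0$, and $s_h$ is a diffeomorphism of $\N^e$ for small $k_h$ because $|\zeta'\beta_h|$ is small. The position bound for $q_h=s_h\circ\pi_\N$ follows from the triangle inequality together with $|\pi_\N(y)-y|=\operatorname{dist}(y,\N)\leq Ck_h^2$ from Lemma~\ref{lem:thin_triangle}.

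For the Jacobian, the chain rule gives $J_{q_h}(y)=J_{s_h}(\pi_\N(y))\,J_{\pi_\N|_\tau}(y)$ on each triangle $\tau$ of $\Nh$. The first factor lies in $[1-Ck_h^2,1+Ck_h^2]$. By Lemma~\ref{lem:thin_triangle}, the second lies in $[0,1+Ck_h^2]$. Their product is therefore nonnegative and at most $(1+Ck_h^2)^2\leq 1+C'k_h^2$. This is the main point where the argument departs from Lemma~\ref{lem:bdry_correct}: no lower bound is available, since thin triangles may collapse under projection, so only the one-sided estimate in \eqref{eq:target_estimate} survives.

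The only step requiring care is confirming that Lemma~\ref{lem:ass_4_guarantee_1} needed no shape regularity. Its proof uses only the chord--arc geometry of boundary edges, and our hypothesis $k_h\to0$ bounds every edge length. So the main obstacle reduces to checking that the arc length $\ell$ is at most $Ck_h$.
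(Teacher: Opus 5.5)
Your proposal is correct and follows the paper's proof: you transplant Lemma~\ref{lem:bdry_correct} to $\N$ with $h$ replaced by $k_h\le\eta_h\to0$, and at the single step \eqref{eq:cutoff_boundary_3} you swap the two-sided Lemma~\ref{lem:area_change} for the one-sided Lemma~\ref{lem:thin_triangle}, which yields $0\le J_{q_h}\le(1+Ck_h^2)^2\le 1+C'k_h^2$. Your extra checks make explicit what the paper leaves implicit: that the boundary-edge Taylor argument needs only $\ell\le Ck_h$ (via chord--arc comparability) rather than shape regularity, and that $\partial(\pi_\N(\Nh))=\pi_\N(\partial\Nh)$.
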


\begin{proof}
This follows from the proof of Lemma~\ref{lem:bdry_correct} on $\N$, with $h$ replaced by $k_h\leq\eta_h\to0$. The only difference is \eqref{eq:cutoff_boundary_3}, which requires shape regularity to use Lemma~\ref{lem:area_change}. We replace it by Lemma~\ref{lem:thin_triangle}, which gives
\[
 0\leq J_{\pi_\N|_\tau}(y)\leq1+Ck_h^2
\]
for $y\in\tau\subset\Nh$. Therefore,
\begin{align*}
	0\leq J_{q_h}(y)&=J_{s_h}(\pi_\N(y))\,J_{\pi_\N|_\tau}(y)\leq(1+Ck_h^2)^2\leq1+C'k_h^2. \qedhere
\end{align*}
\end{proof}

As a result, we lift the simplicial map $f_h$ to a homeomorphism by
\begin{equation}  \label{eq:lift}
	\mathrm{L}_h f_h
	:= q_h \circ f_h \circ p_h^{-1} \in\mathrm{Homeo}^+(\M,\N),
\end{equation}
where
\[
p_h = r_h \circ \pi_\M, \qquad q_h = s_h \circ \pi_\N,
\]
constructed by Lemma~\ref{lem:bdry_correct} and Corollary~\ref{cor:bdry_correct_map}, whereas if $\partial \M = \varnothing$, we set $r_h = s_h = \mathrm{id}$; then \eqref{eq:correction} and \eqref{eq:target_estimate} follow directly from Lemmas~\ref{lem:area_change} and~\ref{lem:thin_triangle}.

\begin{lemma} \label{lem:one_side_compare}
Let $\M$ be a compact smooth surface and let $\Mh$ be the triangular mesh approximating $\M$ satisfying Assumption~\ref{ass:source}. Suppose $f_h \in \A_h$ as in \eqref{eq:admissible_set}. For sufficiently small $h$,
\[
\widebar{\E_\rS}(\mathrm{L}_h f_h)
\leq \Big(1 + C(h^2 + \eta_h^2)\Big) \,E_\rS(f_h),
\]
where $C$ is independent of $f_h$.
\end{lemma}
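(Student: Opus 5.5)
The plan is to compute the pushforward measure $\mu_u$ of $u:=\mathrm{L}_hf_h=q_h\circ f_h\circ p_h^{-1}$ explicitly, show it is absolutely continuous with an $L^2$ density, and compare that density pointwise with $J_{f_h}\circ p_h^{-1}$. For a Borel set $B\subset\M$, set $B_h:=p_h^{-1}(B)\subset\Mh$. Since $u$ is a homeomorphism,
\[
\mu_u(B)=|u(B)|_\N=|q_h(f_h(B_h))|_\N .
\]
On each triangle $\tau$ of $\Mh$, $f_h$ is affine with constant area ratio $J_{f_h}|_\tau$, and $q_h$ is Lipschitz on each image triangle $f_h(\tau)$. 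The map $q_h$ is injective on $\Nh$, since $\pi_\N|_{\Nh}$ is a homeomorphism by Assumption~\ref{ass:admissible}(ii) and $s_h$ is a diffeomorphism. The area formula on each $f_h(\tau)$ then gives
\[
|q_h(f_h(B_h))|_\N=\sum_\tau\int_{f_h(B_h\cap\tau)}J_{q_h}\,\d A
=\sum_\tau\int_{B_h\cap\tau}(J_{q_h}\circ f_h)\,J_{f_h}\,\d A .
\]
Next we change variables $x=p_h^{-1}(z)$ with $p_h:\Mh\to\M$. By Lemma~\ref{lem:bdry_correct} (or Lemma~\ref{lem:area_change} when $\partial\M=\varnothing$), $p_h$ is bi-Lipschitz triangle-wise with $|J_{p_h}-1|\le Ch^2$. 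This yields $\mu_u=\widehat J_u\,\d A_\M$ with
\[
\widehat J_u=\Big(\frac{(J_{q_h}\circ f_h)\,J_{f_h}}{J_{p_h}}\Big)\circ p_h^{-1}\quad\text{a.e. on }\M .
\]
Thus $\mu_u$ is absolutely continuous. I would stress that the only possible degeneracy is on the target side, where $J_{q_h}$ may vanish; this just shrinks mass, so no singular part appears. Preimages of triangle edges are null sets.

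For the energy, Corollary~\ref{cor:bdry_correct_map} gives $0\le J_{q_h}\le 1+Ck_h^2\le 1+C\eta_h^2$. Squaring and changing variables back to $\Mh$, one factor $J_{p_h}$ cancels:
\[
\widebar{\E_\rS}(u)=\int_\M\widehat J_u^2\,\d A_\M=\int_{\Mh}\frac{(J_{q_h}\circ f_h)^2J_{f_h}^2}{J_{p_h}}\,\d A
\le\frac{(1+C\eta_h^2)^2}{1-Ch^2}\int_{\Mh}J_{f_h}^2\,\d A .
\]
For $h$ small the prefactor is at most $1+C(h^2+\eta_h^2)$, and the remaining integral is $E_\rS(f_h)$. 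This proves the claim, with $C$ depending only on $\M,\N$ and the shape-regularity constant, not on $f_h$. In particular $\widehat J_u\in L^2$, so $\widebar{\E_\rS}(u)$ is given by the first case of its definition.

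The main obstacle I expect is justifying the area formula for $q_h$ on thin image triangles. There $\pi_\N|_\tau$ may have a degenerate differential, so $q_h$ need not be a diffeomorphism triangle-wise. I would handle this with the Lipschitz area formula and global injectivity of $q_h$ on $\Nh$, which makes the multiplicity function equal to $1$. The second concern is that $u$ is onto $\N$, so $\mu_u$ captures all of $|\N|$. This follows from $q_h(\Nh)=\N$, which comes from Assumption~\ref{ass:admissible}(ii)--(iii) and the boundary correction.
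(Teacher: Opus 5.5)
Your proof is correct and follows the paper's route: the relation $\mathrm L_hf_h\circ p_h=q_h\circ f_h$ gives the density $(J_{q_h}\circ f_h)\,J_{f_h}/J_{p_h}$, a change of variables through $p_h$ cancels one factor of $J_{p_h}$, and the bounds $|J_{p_h}-1|\le Ch^2$ and $0\le J_{q_h}\le 1+Ck_h^2\le 1+C\eta_h^2$ finish the estimate. Your extra step of deriving $\mu_u=\widehat J_u\,\d A_\M$ from the Lipschitz area formula with multiplicity one (via injectivity of $q_h$ on $\Nh$) fills in the identification of the relaxed density with the a.e. Jacobian, which the paper takes for granted; one small correction is that $s_h$ is only a piecewise-smooth bi-Lipschitz homeomorphism, not a diffeomorphism, since $\beta_h$ has kinks at the boundary vertices, but this is all your argument needs.
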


\begin{proof}
For almost every $x\in\Mh$,  since $\mathrm{L}_hf_h\circ p_h=q_h\circ f_h$, the chain rule gives
\begin{equation} \label{eq:one_side_compare_1}
J_{\mathrm{L}_h f_h} \big( p_h(x) \big)  = \frac{J_{q_h} \big( f_h(x) \big) \, J_{f_h}(x)}{J_{p_h}(x)}.
\end{equation}
Substituting \eqref{eq:one_side_compare_1} into the energy of the lift and changing variables via $p_h$, we obtain
\begin{align*}
\widebar{\E_\rS}(\mathrm{L}_h f_h)
&= \int_{\M} J_{\mathrm{L}_h f_h}^2 \, \d A_\M
= \int_{\Mh} \frac{J_{q_h} \big( f_h(x) \big)^2 \, J_{f_h}(x)^2}{J_{p_h}(x)^2} \, J_{p_h}(x) \, \d A_{\Mh}\\
&= \int_{\Mh} \frac{J_{q_h} \big( f_h(x) \big)^2}{J_{p_h}(x)} \, J_{f_h}(x)^2 \, \d A_{\Mh}.
\end{align*}
By \eqref{eq:correction} and \eqref{eq:target_estimate}, we have the estimates
\[
|J_{p_h} - 1| \leq Ch^2, \qquad 0\leq J_{q_h}\leq 1+Ck_h^2.
\]
Thus, for sufficiently small $h$,
\[
\frac{J_{q_h} \big( f_h(x) \big)^2}{J_{p_h}(x)} \leq \frac{(1+Ck_h^2)^2}{1 - Ch^2} \leq (1 + Ck_h^2)(1+Ch^2) \leq 1 + C'(h^2 + k_h^2).
\]
Consequently, since $k_h\leq\eta_h$,
\begin{align*}
\widebar{\E_\rS}(\mathrm{L}_h f_h)
\leq \Big(1 + C(h^2 + k_h^2)\Big) \int_{\Mh} J_{f_h}(x)^2 \, \d A_{\Mh}
\leq \Big(1 + C(h^2 + \eta_h^2)\Big) \,E_\rS(f_h).
\end{align*}
\end{proof}

\subsection{Approximation by interpolation}
\label{sec:interp}

It remains to justify the approximation by interpolation in \eqref{eq:idea_limsup}. For a diffeomorphism $g$, we denote its piecewise-linear interpolant as $\mathrm I_hg$ and provide the following lemma.

\begin{lemma}\label{lem:recover_smooth_map}
Let $\{\Mh\}$ satisfy Assumption~\ref{ass:source}, and let $g:\M\to\N$ be an orientation-preserving diffeomorphism. Then
\begin{enumerate}
	\item For all sufficiently small $h$, the piecewise-linear interpolant $\mathrm I_hg$ satisfies Assumption~\ref{ass:admissible}, i.e., $\mathrm I_hg \in \A_h$.
	\item As $h\to0$,
	\[
	d_H(\mathrm L_h\mathrm I_hg,g)\to0,
	\qquad E_\rS(\mathrm I_hg)\to\E_\rS(g).
	\]
\end{enumerate}
\end{lemma}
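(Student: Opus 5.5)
We treat the two parts in order. For part 1, we set $f_h=\mathrm I_hg$, which on each triangle $\tau$ of $\Mh$ is the affine map with vertex values $g(v_i)$. To compare $f_h$ with $g$ we use the smooth map $G:=g\circ\pi_\M$ on $\Mh$, or $g\circ p_h$ when $\partial\M\neq\varnothing$ (the same estimates apply by Lemma~\ref{lem:bdry_correct}). $G$ agrees with $f_h$ at the vertices and has bounded first and second derivatives. Standard affine interpolation on shape-regular triangles (Assumption~\ref{ass:source}(iii)) then gives
\[
\|f_h-G\|_{L^\infty(\tau)}\le Ch^2,\qquad \|\mathrm D f_h-\mathrm D G\|_{L^\infty(\tau)}\le Ch.
\]
We verify the items of Assumption~\ref{ass:admissible} one by one.
\begin{itemize}
\item Item (i) holds because $g$ maps $\M$ to $\N$ and $\partial\M$ to $\partial\N$.
\item Since $g$ is a diffeomorphism, $\mathrm DG$ restricted to the plane of $\tau$ has singular values bounded above and below uniformly. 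The $O(h)$ derivative error therefore keeps $\mathrm Df_h|_\tau$ nondegenerate and orientation-preserving. The image triangles are then shape-regular with diameter $k_h\le Ch$, which gives (iv) for any choice such as $\eta_h=\sqrt h$.
\item $f_h$ is a local homeomorphism that is $C^0$-close to the homeomorphism $G$, hence injective for small $h$ (a degree argument, or injectivity from the uniform lower bound on $\mathrm Df_h$ together with $C^0$-closeness).
\item For (ii), we want $\pi_\N\circ f_h$ to be a homeomorphism onto its image. It is $C^1$-close to $\pi_\N\circ G=G$, whose differential is an isomorphism onto $T\N$. So $\pi_\N\circ f_h$ is a local diffeomorphism on each triangle, globally injective by the same closeness argument, and onto $\N$ in the closed case.
\item For (iii), boundary edges of $\Nh$ join the consecutive points $g(v_i)$ on $\partial\N$. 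Since $g|_{\partial\M}$ is a homeomorphism onto $\partial\N$, these points inherit the exactly-once covering from Assumption~\ref{ass:source}(iv).
\end{itemize}

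For the first claim in part 2, write $\mathrm L_h\mathrm I_hg-g=(q_h\circ f_h-g\circ p_h)\circ p_h^{-1}$. We split
\[
|q_h(f_h(x))-g(p_h(x))|\le |q_h(f_h(x))-f_h(x)|+|f_h(x)-g(\pi_\M x)|+|g(\pi_\M x)-g(p_h x)|.
\]
By Corollary~\ref{cor:bdry_correct_map}, Lemma~\ref{lem:bdry_correct} and the interpolation estimate, the three terms are $O(k_h^2)$, $O(h^2)$ and $O(h^2)$. Hence $d_H\to0$.

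For the energy claim, $J_{f_h}$ is the area ratio of an affine map. Because $\mathrm Df_h=\mathrm DG+O(h)$, we get $J_{f_h}=J_G+O(h)$ uniformly on $\Mh$, where $J_G(x)=J_g(\pi_\M x)\,J_{\pi_\M}(x)$ by the chain rule. By Lemma~\ref{lem:area_change}, $J_{\pi_\M}=1+O(h^2)$. Changing variables through $\pi_\M$ (or $p_h$) then gives
\[
E_\rS(f_h)=\int_{\Mh}J_{f_h}^2\,\d A=\int_{\M}J_g^2\,\d A_\M+O(h)\longrightarrow\E_\rS(g).
\]

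The main obstacle is the global injectivity and homeomorphism property of $\pi_\N|_{\Nh}$ in (ii), together with the boundary covering in (iii). The remaining steps are routine Taylor estimates. For (ii) I would first try a covering-degree argument: $\pi_\N\circ f_h\circ(\pi_\M)^{-1}$ is a local homeomorphism that is uniformly close to $g$. On each component it is homotopic to $g$, so its degree is one, and a degree-one covering map is a homeomorphism. With boundary, I would first establish the boundary bijection using (iii) and then apply the same argument relative to the boundary.
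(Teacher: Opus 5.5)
Your proposal is correct and follows the paper's skeleton. You check Assumption~\ref{ass:admissible} item by item, handle (iii) exactly as the paper does through $g|_{\partial\M}$, and then use the same three-term triangle inequality for $d_H(\mathrm L_h\mathrm I_hg,g)$.

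The difference lies in what is proved versus imported. The paper takes the following directly from \cite[Lemmas~9, 13, 14 and Theorem~10]{LiKY26}:
\begin{itemize}
\item the homeomorphism property of $\mathrm I_hg$;
\item items (i)--(ii), via \cite[Assumption~7]{LiKY26};
\item the bound $k_h\le C_gh$;
\item the estimate $\|\mathrm I_hg-g\circ\pi_\M\|_{L^\infty(\Mh)}\le C_gh^2$;
\item the energy convergence.
\end{itemize}
You instead derive these from standard interpolation estimates on shape-regular triangles, a bi-Lipschitz or degree argument for injectivity, and a first-order perturbation of the Jacobian. Your version is self-contained and elementary. It gives only an $\O(h)$ rate for $E_\rS(\mathrm I_hg)-\E_\rS(g)$, but that is enough for the convergence claimed here.

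Two small points need tidying.

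First, $\eta_h$ is prescribed in the definition of $\A_h$, so you cannot choose it. Justify (iv) by $k_h/\eta_h\le Ch/\eta_h\to0$.

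Second, in the boundary case $g\circ p_h$ does not interpolate $\mathrm I_hg$ at interior vertices in the collar, because $r_h$ moves them by $\O(h^2)$. Moreover, $p_h$ is only piecewise smooth, with derivative jumps that are not aligned with the triangles. Your estimates survive this: an $\O(h^2)$ perturbation of vertex values costs only $\O(h)$ in derivatives by shape regularity. It is cleaner, though, to extend $g$ smoothly across $\partial\M$ and use $g\circ\pi_\M$ throughout. Your three-term split, like the paper's, already presupposes such an extension, since it evaluates $g$ at $\pi_\M(x)$.
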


\begin{proof}
\cite[Lemmas~13 and~14]{LiKY26} have shown that, for sufficiently small $h$, the interpolant $\mathrm{I}_h g$ is an orientation-preserving piecewise-linear homeomorphism, $\Nh$ satisfies \cite[Assumption~7]{LiKY26} with respect to $\N$, and $k_h \leq C_g h$.

To prove (1), it suffices to verify Assumption~\ref{ass:admissible} (i)--(iii) and to show that $k_h \leq C_g h$ implies $k_h \leq \eta_h$. In fact, \cite[Assumption~7]{LiKY26} contains Assumption~\ref{ass:admissible} (i) and (ii), and further requires the difference between $\pi_\N(\N_h)$ and $\N$ to lie within an $\O(h^2)$ strip of $\partial \N$. For (iii), $g$ maps $\partial\M$ homeomorphically onto $\partial\N$. Hence, for each boundary edge of $\Mh$, the arc of $\partial\M$ joining its endpoints is mapped to the arc of $\partial\N$ joining the endpoints of the image edge. Since the former arcs cover $\partial\M$ exactly once by Assumption~\ref{ass:source} (iv), the latter cover $\partial\N$ exactly once. For the mesh size,
\[
\frac{k_h}{\eta_h}\leq C_g\frac h{\eta_h}\to0,
\]
and hence we have $k_h\leq\eta_h$ for sufficiently small $h$.

For (2), the energy convergence follows from \cite[Theorem~10]{LiKY26}. Since $\mathrm{L}_h \mathrm{I}_h g \circ p_h=q_h\circ \mathrm{I}_h g$,
\[
d_H(\mathrm L_h\mathrm{I}_h g,g) =\sup_{x\in\Mh}\big|q_h(\mathrm{I}_h g(x))-g(p_h(x))\big|.
\]
For $x \in \Mh$, the triangle inequality gives
\begin{align*}
	&|q_h(\mathrm{I}_h g(x)) - g(p_h(x))| \\
	&\quad
	\leq |q_h(\mathrm{I}_h g(x)) -\mathrm{I}_h g(x)| + |\mathrm{I}_h g(x) - g(\pi_\M(x))| + | g(\pi_\M(x)) - g(p_h(x))|\\
	&\quad
	\leq\|q_h-\mathrm{id}\|_{L^\infty(\Nh)} + \|\mathrm{I}_h g - g\circ\pi_\M\|_{L^\infty(\Mh)}+\|g\circ\pi_\M-g\circ p_h\|_{L^\infty(\Mh)} .
\end{align*}
The first term is bounded by $Ck_h^2\leq C\eta_h^2$ by \eqref{eq:target_estimate} and the second is at most $C_gh^2$ by \cite[Lemma~9]{LiKY26}. For the third term, by the smoothness of $g$, Lemma~\ref{lem:area_change}, and \eqref{eq:correction},
\begin{align*}
|g(\pi_{\mathcal M}(x)) - g(p_h(x))| &\leq C |\pi_{\mathcal M}(x) - p_h(x)|\\
&\leq C \big( |\pi_{\mathcal M}(x) - x| + |x - p_h(x)| \big) \leq C'h^2. \qedhere
\end{align*}
\end{proof}

\section{\texorpdfstring{$\Gamma$}{Gamma}-convergence}
\label{sec:main}
We now establish the $\Gamma$-convergence of the stretch energy and provide the derived properties.

\subsection{The main theorem}\label{sec:gamma_thm}

The discrete energy $E_\rS$ is defined on piecewise-linear maps, whereas $\Gamma$-convergence compares functionals on one fixed space. We therefore extend $E_\rS$ by the lift:
\begin{equation}\label{eq:pushed_energy}
\widebar{E_\rS}_h(u) = \inf \left\{ E_\rS(f_h) \; \middle| \; f_h\in\A_h,\quad \mathrm{L}_h f_h = u \right\}, \qquad \inf \varnothing = +\infty,
\end{equation}
where $\A_h$ is defined in \eqref{eq:admissible_set}. Hence, $\widebar{E_\rS}_h(u)<\infty$ if $u$ is the lifting of a discrete map satisfying Assumption~\ref{ass:admissible}.

\begin{theorem}\label{thm:main}
Let $\{\Mh\}$ satisfy Assumption~\ref{ass:source}. Then $\widebar{E_\rS}_h$ and $\widebar{\E_\rS}$ in \eqref{eq:pushed_energy} and \eqref{eq:relaxed_energy} satisfy
\[
\widebar{E_\rS}_h \xrightarrow[\;h\to0\;]{\;\Gamma(d_H) \;} \widebar{\E_\rS}.
\]
Explicitly,
\begin{itemize}
\item[(i)] for every $u\in\mathrm{Homeo}^+(\M,\N)$ and every $u_h \to u$ in $\mathrm{Homeo}^+(\M,\N)$,
\[
\widebar{\E_\rS}(u) \leq \liminf_{h \to 0} \widebar{E_\rS}_h (u_h);
\]
\item[(ii)] for every $u \in \mathrm{Homeo}^+(\M,\N)$, there exists $u_h \to u$ in $\mathrm{Homeo}^+(\M,\N)$ such that
\[
\limsup_{h \to 0} \widebar{E_\rS}_h (u_h) \leq  \widebar{\E_\rS}(u).
\]
\end{itemize}
\end{theorem}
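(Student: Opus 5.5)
For the liminf inequality (i), I would take $u_h\to u$ in $d_H$ and assume without loss of generality that $\liminf_{h\to0}\widebar{E_\rS}_h(u_h)<\infty$. Pass to a subsequence $h_k\to0$ realizing the liminf with $\widebar{E_\rS}_{h_k}(u_{h_k})<\infty$. By definition \eqref{eq:pushed_energy}, for each $k$ there is $f_{h_k}\in\A_{h_k}$ with $\mathrm L_{h_k}f_{h_k}=u_{h_k}$ and
\[
E_\rS(f_{h_k})\leq\widebar{E_\rS}_{h_k}(u_{h_k})+h_k .
\]
Lemma~\ref{lem:one_side_compare} then gives
\[
\widebar{\E_\rS}(u_{h_k})\leq\big(1+C(h_k^2+\eta_{h_k}^2)\big)E_\rS(f_{h_k}),
\]
where $C$ is independent of $f_{h_k}$ and the prefactor tends to $1$. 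Lemma~\ref{lem:weak_conv_area} applies to the sequence $u_{h_k}\to u$ in $\mathrm{Homeo}^+(\M,\N)$, so
\[
\widebar{\E_\rS}(u)\leq\liminf_k\widebar{\E_\rS}(u_{h_k})\leq\liminf_k E_\rS(f_{h_k})\leq\liminf_{h\to0}\widebar{E_\rS}_h(u_h).
\]
This is exactly the chain \eqref{eq:idea_liminf}. The only care needed is that the lifts lie in $\mathrm{Homeo}^+(\M,\N)$, which the construction \eqref{eq:lift} provides.

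For the limsup inequality (ii), the case $\widebar{\E_\rS}(u)=+\infty$ is trivial with $u_h=u$. Otherwise, Theorem~\ref{thm:energy_dense} gives diffeomorphisms $g_j$ with $d_H(g_j,u)\to0$ and $\E_\rS(g_j)\to\widebar{\E_\rS}(u)$. For each fixed $j$, Lemma~\ref{lem:recover_smooth_map} gives $\mathrm I_hg_j\in\A_h$ for $h$ small, together with $d_H(\mathrm L_h\mathrm I_hg_j,g_j)\to0$ and $E_\rS(\mathrm I_hg_j)\to\E_\rS(g_j)$. The candidate recovery sequence is $u_h:=\mathrm L_h\mathrm I_hg_{j(h)}$, so that $\widebar{E_\rS}_h(u_h)\leq E_\rS(\mathrm I_hg_{j(h)})$ by definition.

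The main step is the diagonal choice of $j(h)$. I would choose thresholds $h_1>h_2>\cdots\downarrow0$ such that for $h<h_j$: $\mathrm I_hg_j\in\A_h$, $d_H(\mathrm L_h\mathrm I_hg_j,g_j)<1/j$, and $E_\rS(\mathrm I_hg_j)<\E_\rS(g_j)+1/j$. Then set $j(h)=j$ for $h\in[h_{j+1},h_j)$, and for $h\geq h_1$ put $u_h=u$ (finitely many values are irrelevant to the limit). With this choice $j(h)\to\infty$ as $h\to0$, so
\[
d_H(u_h,u)\leq\tfrac1{j(h)}+d_H(g_{j(h)},u)\to0,\qquad \limsup_{h\to0}\widebar{E_\rS}_h(u_h)\leq\lim_j\big(\E_\rS(g_j)+\tfrac1j\big)=\widebar{\E_\rS}(u).
\]
Alternatively one can invoke the standard metric diagonal lemma for $\Gamma$-limsup. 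The one subtlety I expect is that $h$ ranges over a general family of mesh sizes rather than a sequence; the thresholds above handle both interpretations uniformly.
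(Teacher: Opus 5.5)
Your proposal is correct and follows the paper's proof essentially step for step. For the liminf you use near-optimal representatives with $E_\rS(f_{h_k})\leq\widebar{E_\rS}_{h_k}(u_{h_k})+h_k$, then Lemma~\ref{lem:one_side_compare} and the lower semicontinuity of Lemma~\ref{lem:weak_conv_area}; for the limsup you use Theorem~\ref{thm:energy_dense}, Lemma~\ref{lem:recover_smooth_map}, and the same decreasing thresholds $h_j$ with $u_h=\mathrm L_h\mathrm I_hg_j$ on $[h_{j+1},h_j)$ (your extra convention $u_h=u$ for $h\geq h_1$ just fills in a detail the paper leaves implicit).
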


\begin{proof}
Let $u_h\to u$ in $\mathrm{Homeo}^+(\M,\N)$.
If $\liminf_{h\to0} \widebar{E_\rS}_h(u_h) = +\infty$, it holds directly. Otherwise, we select a subsequence $h_k\to0$ such that $\widebar{E_\rS}_{h_k} (u_{h_k}) < \infty$ and
\[
\lim_{k\to\infty}\widebar{E_\rS}_{h_k}(u_{h_k})=\liminf_{h\to0}\widebar{E_\rS}_h(u_h).
\]
By \eqref{eq:pushed_energy}, we choose admissible representatives $f_k$ at mesh size $h_k$ with
\[
\mathrm{L}_{h_k}f_k=u_{h_k},\qquad
E_\rS(f_k)\leq\widebar{E_\rS}_{h_k}(u_{h_k})+h_k.
\]
The lower semicontinuity in Lemma~\ref{lem:weak_conv_area} and Lemma~\ref{lem:one_side_compare} give
\begin{align*}
	\widebar{\E_\rS}(u) &\leq \liminf_{k \to \infty} \widebar{\E_\rS} (u_{h_k})
	= \liminf_{k \to \infty} \widebar{\E_\rS} (\mathrm{L}_{h_k} f_k)\\
	&\leq \liminf_{k \to \infty} \Big(1 + C(h_k^2+ \eta_{h_k}^2)\Big)E_\rS(f_k)\\
	&\leq \lim_{k \to \infty} \Big(1 + C(h_k^2+ \eta_{h_k}^2)\Big)\Big(\widebar{E_\rS}_{h_k}(u_{h_k})+h_k\Big)
	= \liminf_{h \to 0} \widebar{E_\rS}_h(u_h).
\end{align*}

If $\widebar{\E_\rS}(u) = \infty$, we choose $u_h = u$. Otherwise, we approximate $u$ by diffeomorphisms and each diffeomorphism by its interpolants. By Theorem~\ref{thm:energy_dense}, there exist diffeomorphisms $g_j$ such that
\begin{equation}\label{eq:liminf_1}
d_H(g_j,u) < \frac{1}{j}, \qquad \big|\E_\rS(g_j)-\widebar{\E_\rS}(u)\big| < \frac{1}{j}.
\end{equation}
By Lemma~\ref{lem:recover_smooth_map}, for each $j$, there exists sufficiently small $h_j>0$ such that, for all $0<h<h_j$, $\mathrm I_hg_j\in\A_h$ and
\begin{equation}\label{eq:liminf_2}
d_H(\mathrm L_h\mathrm I_hg_j,g_j) < \frac{1}{j},\qquad \big|E_\rS(\mathrm I_hg_j)-\E_\rS(g_j)\big| < \frac{1}{j}.
\end{equation}
Since these properties still hold when $h_j$ is decreased, we assume $h_1>h_2>\cdots$ and $h_j\to0$. We construct $u_h$ by
\[
u_h:=\mathrm L_h\mathrm I_hg_j\quad\text{for }h_{j+1}\leq h<h_j.
\]
Consequently, by \eqref{eq:liminf_1} and \eqref{eq:liminf_2},
\[
d_H(u_h,u)\leq d_H(u_h,g_j)+d_H(g_j,u) < \frac{2}{j},
\]
and by \eqref{eq:pushed_energy}
\begin{align*}
\widebar{E_\rS}_h(u_h)&\leq E_\rS(\mathrm I_hg_j)<\E_\rS(g_j)+\frac1j<\widebar{\E_\rS}(u) + \frac{2}{j}.
\end{align*}
Since $j \to \infty$ as $h \to0$ with $h_{j+1}\leq h<h_j$, we conclude the result.
\end{proof}

\subsection{Local minimizers}
\label{sec:local_min}
While the $\Gamma$-convergence does not control local minimizers in general \cite[Remark~1.23]{Brai02}, for discrete maps that are minimal in balls of a fixed radius, the local minimality can be preserved.

\begin{theorem}[Limits of local minimizers]\label{thm:local_min_limit}
Let $\rho>0$, and for every sufficiently small $h$ let $f_h \in \A_h$ in \eqref{eq:admissible_set} satisfy
\begin{equation}\label{eq:local_min}
	E_\rS(f_h)\leq E_\rS(g_h)\qquad\text{for all } g_h \in \A_h \text{ with }d_H(\mathrm{L}_hg_h,\mathrm{L}_hf_h)<\rho .
\end{equation}
If $\mathrm{L}_hf_h\to f$ in $\mathrm{Homeo}^+(\M,\N)$, then
\begin{itemize}
\item[(a)] $f$ minimizes $\widebar{\E_\rS}$ on the ball of radius $\rho$:
\[
\widebar{\E_\rS}(f)\leq\widebar{\E_\rS}(v)\qquad\text{for all }v\in\mathrm{Homeo}^+(\M,\N)\text{ with }d_H(v,f)<\rho .
\]
\item[(b)] $\widebar{\E_\rS}(f)<\infty$, and $E_\rS(f_h)\to\widebar{\E_\rS}(f)$ as $h\to0$.
\item[(c)] If $|\M|=|\N|$, then $f$ preserves area, and
$$E_\rS(f_h) \longrightarrow  |\M| =\min_{\mathrm{Homeo}^+(\M,\N)}\widebar{\E_\rS}.$$
\end{itemize}
\end{theorem}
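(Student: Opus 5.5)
The plan is to combine the recovery sequences from the proof of Theorem~\ref{thm:main}(ii) with the $\Gamma$-liminf inequality and the local minimality \eqref{eq:local_min}. Write $u_h:=\mathrm L_hf_h\to f$.

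For (a), fix $v\in\mathrm{Homeo}^+(\M,\N)$ with $d_H(v,f)<\rho$. We may assume $\widebar{\E_\rS}(v)<\infty$. By Theorem~\ref{thm:energy_dense} and Lemma~\ref{lem:recover_smooth_map}, as in the limsup part of Theorem~\ref{thm:main}, there are admissible $g_h=\mathrm I_hg_j\in\A_h$ with
\[
d_H(\mathrm L_hg_h,v)\to0,\qquad \limsup_{h\to0}E_\rS(g_h)\leq\widebar{\E_\rS}(v).
\]
Set $\epsilon:=\rho-d_H(v,f)>0$. For small $h$ we have $d_H(\mathrm L_hg_h,v)<\epsilon/2$ and $d_H(u_h,f)<\epsilon/2$, so the triangle inequality gives $d_H(\mathrm L_hg_h,u_h)<\rho$. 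Then \eqref{eq:local_min} gives $E_\rS(f_h)\le E_\rS(g_h)$. The liminf argument of Theorem~\ref{thm:main}(i), namely Lemma~\ref{lem:weak_conv_area} together with Lemma~\ref{lem:one_side_compare} applied directly to $f_h$, yields
\[
\widebar{\E_\rS}(f)\le\liminf_{h\to0}\big(1+C(h^2+\eta_h^2)\big)E_\rS(f_h)\le\liminf_{h\to0}E_\rS(f_h)\le\limsup_{h\to0}E_\rS(g_h)\le\widebar{\E_\rS}(v).
\]
This chain is used as a template in (b) as well.

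For (b), finiteness comes from using the chain with some $v$ of finite energy. Take $v$ a diffeomorphism with $d_H(v,f)<\rho$, obtained from \eqref{eq:surface_smoothing} as in Lemma~\ref{lem:local_global}. Then $\widebar{\E_\rS}(f)\le \E_\rS(v)<\infty$. For the energy convergence, apply the chain with $v=f$ itself, which is allowed since $d_H(f,f)=0<\rho$. This gives
\[
\widebar{\E_\rS}(f)\le\liminf_{h\to0} E_\rS(f_h)\le\limsup_{h\to0}E_\rS(f_h)\le\limsup_{h\to0}E_\rS(g_h)\le\widebar{\E_\rS}(f),
\]
where $g_h$ is a recovery sequence for $f$. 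Hence $E_\rS(f_h)\to\widebar{\E_\rS}(f)$.

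For (c), by (a) the map $f$ is a local minimizer of $\widebar{\E_\rS}$ on a $\rho$-ball. Lemma~\ref{lem:local_global} then gives that $f$ preserves area and $\widebar{\E_\rS}(f)=|\M|$. By \eqref{eq:energy_gap}, $\widebar{\E_\rS}\ge|\M|$ on all of $\mathrm{Homeo}^+(\M,\N)$, so $|\M|$ is the minimum and it is attained at $f$. Combined with (b), this gives $E_\rS(f_h)\to|\M|$.

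The main technical point is that the recovery sequence must consist of admissible maps in $\A_h$ whose lifts converge to $v$, not merely of maps with energies approaching $\widebar{\E_\rS}(v)$. Only then do their lifts eventually lie in the ball of \eqref{eq:local_min}. The diagonal construction in Theorem~\ref{thm:main}(ii) provides exactly this.
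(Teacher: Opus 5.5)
Your proposal is correct and follows the paper's proof essentially step by step. You build a recovery sequence whose lifts converge to $v$, use the triangle inequality to place it in the $\rho$-ball, and run the liminf chain; then you take $v$ a diffeomorphism for finiteness, $v=f$ for energy convergence, and Lemma~\ref{lem:local_global} for (c). The only cosmetic difference is that you use the interpolants $\mathrm I_hg_j$ and Lemma~\ref{lem:one_side_compare} directly, whereas the paper invokes Theorem~\ref{thm:main}(i)--(ii) and picks near-optimal representatives of $\widebar{E_\rS}_h(v_h)$.
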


\begin{proof}
Let $v \in \mathrm{Homeo}^+(\M,\N)$ satisfy $d_H(v,f)<\rho$ and $\widebar{\E_\rS}(v)<\infty$. By Theorem~\ref{thm:main}(ii), there exists $v_h\to v$ in $\mathrm{Homeo}^+(\M,\N)$ with
\[
\limsup_{h\to0}\widebar{E_\rS}_h(v_h)\leq\widebar{\E_\rS}(v)<\infty .
\]
In particular, $\widebar{E_\rS}_h(v_h)<\infty$ for sufficiently small $h$.
By \eqref{eq:pushed_energy}, we may choose $g_h \in \A_h$ such that
\[
\mathrm{L}_hg_h=v_h,\qquad E_\rS(g_h)\leq\widebar{E_\rS}_h(v_h)+h .
\]
By the triangle inequality,
\[
d_H(\mathrm{L}_hg_h, \mathrm{L}_hf_h)\leq d_H(v_h,v)+d_H(v,f)+d_H(f, \mathrm{L}_hf_h).
\]
The first and the last terms tend to zero, while the second is less than $\rho$. Hence $d_H(\mathrm{L}_hg_h,\mathrm{L}_hf_h)<\rho$ for sufficiently small $h$, and \eqref{eq:local_min} gives $E_\rS(f_h)\leq E_\rS(g_h)$.
Moreover, since $\widebar{E_\rS}_h(\mathrm{L}_hf_h)\leq E_\rS(f_h)$ by \eqref{eq:pushed_energy}, Theorem~\ref{thm:main}(i) gives
\begin{equation}\label{eq:local_min_chain}
\begin{aligned}
\widebar{\E_\rS}(f)&\leq\liminf_{h\to0}\widebar{E_\rS}_h(\mathrm{L}_hf_h)\leq\liminf_{h\to0}E_\rS(f_h)\\
&\leq\limsup_{h\to0}E_\rS(f_h)\leq\limsup_{h\to0}E_\rS(g_h)\leq\widebar{\E_\rS}(v),
\end{aligned}
\end{equation}
which proves (a).

For (b), by the approximation of homeomorphisms by diffeomorphisms~\cite{Mois77,Munk60}, we choose a diffeomorphism $v$ with $d_H(v,f)<\rho$ and $\widebar{\E_\rS}(v)<\infty$, so (a) gives $\widebar{\E_\rS}(f)<\infty$. Setting $v=f$ in \eqref{eq:local_min_chain} then gives $E_\rS(f_h)\to\widebar{\E_\rS}(f)$.

For (c), since $f$ minimizes $\widebar{\E_\rS}$ on the ball of radius $\rho$ around it, Lemma~\ref{lem:local_global} shows that $f$ preserves area and $\widebar{\E_\rS}(f)=|\M|$, and (b) gives $E_\rS(f_h)\to\widebar{\E_\rS}(f)$.
\end{proof}

Consequently, as long as the minimality of discrete local minimizers holds in balls of a fixed radius and their lifts converge, they converge to global minimizers of the continuum problem. In particular, when $|\M|=|\N|$, the limit is an area-preserving homeomorphism.

\subsection{Almost minimizers and perturbations}
\label{sec:perturb}

These results give the convergence of the minimal values.

\begin{corollary}  \label{cor:min_converge}
Let $\M$ be a compact smooth surface and let $\Mh$ be the triangular mesh approximating $\M$ satisfying Assumption~\ref{ass:source}. Assume $|\M| = |\N|$. Then, for sufficiently small $h$,
\[
\frac{|\M|}{1 + C(h^2 + \eta_h^2)} \leq \inf_{\A_h} E_\rS \leq |\M| + Ch^2 .
\]
In particular,
\[
\inf_{\A_h} E_\rS ~\longrightarrow \min_{\mathrm{Homeo}^+(\M,\N)} \widebar{\E_\rS} = |\M|.
\]
\end{corollary}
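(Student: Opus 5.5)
The two bounds have different sources: the lower bound comes from the one-sided energy comparison of the lift, and the upper bound comes from interpolating a fixed area-preserving diffeomorphism. The convergence statement then follows by squeezing.

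\emph{Lower bound.} Take any $f_h\in\A_h$. By Lemma~\ref{lem:one_side_compare},
\[
\widebar{\E_\rS}(\mathrm L_hf_h)\leq\big(1+C(h^2+\eta_h^2)\big)E_\rS(f_h),
\]
with $C$ independent of $f_h$. Since $\mathrm L_hf_h\in\mathrm{Homeo}^+(\M,\N)$ and $|\M|=|\N|$, the identity \eqref{eq:energy_gap} applies whenever the energy is finite and gives $\widebar{\E_\rS}(\mathrm L_hf_h)\geq|\M|$; if the energy is infinite the bound is trivial. Dividing by $1+C(h^2+\eta_h^2)$ and taking the infimum over $\A_h$ gives the left inequality. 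The only thing to check is that the threshold for ``sufficiently small $h$'' in Lemma~\ref{lem:one_side_compare} does not depend on $f_h$. This holds because the estimates \eqref{eq:correction} and \eqref{eq:target_estimate} depend only on $h$ and on the bound $k_h\leq\eta_h$.

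\emph{Upper bound.} First I produce an area-preserving diffeomorphism $g\in\mathrm{Diff}^+(\M,\N)$. Take any smooth diffeomorphism $a$ (it exists by \cite{Mois77,Munk60}, or because $\M$ and $\N$ are assumed homeomorphic). Then $J_a\,\d A_\M$ and $\d A_\M$ have equal total mass, since $|\M|=|\N|$. Moser's theorem~\cite{Mose65}, or Banyaga's version~\cite{Bany1974} in the boundary case, gives $\phi$ with $\phi^*(J_a\,\d A_\M)=\d A_\M$, and I set $g=a\circ\phi$. Then $J_g\equiv1$, so $\E_\rS(g)=|\M|$. By Lemma~\ref{lem:recover_smooth_map}, $\mathrm I_hg\in\A_h$ for small $h$, so
\[
\inf_{\A_h}E_\rS\leq E_\rS(\mathrm I_hg).
\]
To obtain the rate $Ch^2$, rather than just $o(1)$, I will invoke the quantitative estimate of the prior work \cite[Theorem~10]{LiKY26}, which bounds $|E_\rS(\mathrm I_hg)-\E_\rS(g)|\leq C_gh^2$. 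Since $g$ is fixed, $C_g$ becomes the constant $C$.

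\emph{Convergence.} Both bounds tend to $|\M|$ because $h,\eta_h\to0$. The equality $\min\widebar{\E_\rS}=|\M|$ follows from \eqref{eq:energy_gap}, with the minimum attained at $g$.

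I expect the main obstacle to be the $O(h^2)$ rate in the upper bound. Lemma~\ref{lem:recover_smooth_map} only states convergence, so I must cite the quantitative version from \cite{LiKY26}. If that citation were unavailable, I would derive the rate directly: I would write $J_{\mathrm I_hg}$ on each triangle as $J_g\circ\pi_\M$ times the projection Jacobian plus an $O(h)$ interpolation error with zero cellwise mean. Squaring and integrating, the linear term integrates to $O(h^2)$, and the projection factor is controlled by Lemma~\ref{lem:area_change}.
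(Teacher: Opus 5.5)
Your lower bound is the paper's argument, and your check that the constant and the smallness threshold in Lemma~\ref{lem:one_side_compare} are uniform in $f_h$ is what justifies taking the infimum. The upper bound follows the paper's route: an area-preserving $g$ from Moser--Banyaga, then $\mathrm I_hg\in\A_h$ by Lemma~\ref{lem:recover_smooth_map}. The quantitative input differs, though. In this paper \cite[Theorem~10]{LiKY26} is only used for qualitative convergence $E_\rS(\mathrm I_hg)\to\E_\rS(g)$. The rate comes instead from \cite[Lemma~15]{LiKY26}, in the one-sided form $E_\rS(\mathrm I_hg)\leq|\Mh|+Ch^2$. The discrete comparison quantity there is $|\Mh|$, not $\E_\rS(g)=|\M|$. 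So the paper needs one more step that your plan skips: $|\M|=\int_{\Mh}J_{p_h}\,\d A\geq(1-Ch^2)|\Mh|$ by \eqref{eq:correction}, hence $|\Mh|\leq|\M|+Ch^2$. Either cite Lemma~15 and add this step, or check that the two-sided $C_gh^2$ rate you attribute to Theorem~10 is actually stated there. Only a one-sided bound is needed.

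Your fallback sketch fails as written. The error $J_{\mathrm I_hg}-(J_g\circ\pi_\M)\,J_{\pi_\M}$ does not have zero mean on each triangle. The quantities $|\mathrm I_hg(\tau)|$ and $\int_{\pi_\M(\tau)}J_g\,\d A_\M$ differ by $O(h^3)$, because the image edges are chords, not the $g$-images of the projected edges. That is a cellwise mean of order $h$, which only gives an $O(h)$ bound on the linear term. For $J_g\equiv1$ the cancellation is global instead. Write $J_{\mathrm I_hg}|_\tau=1+e_\tau$ with $e_\tau=O(h)$; then $\sum_\tau|\tau|e_\tau=|\Nh|-|\Mh|$, so
\[
E_\rS(\mathrm I_hg)=2|\Nh|-|\Mh|+\sum_\tau|\tau|\,e_\tau^2=|\M|+O(h^2).
\]
This uses $|\Mh|=|\M|+O(h^2)$ and $|\Nh|=|\N|+O(h^2)=|\M|+O(h^2)$. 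The bound on $|\Nh|$ requires the two-sided estimate $|J_{q_h}-1|\leq Ch^2$. It is available here because the image of the interpolant of a diffeomorphism is shape-regular. The one-sided estimate \eqref{eq:target_estimate} only gives a lower bound on $|\Nh|$, which is the wrong direction.
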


\begin{proof}
	By \eqref{eq:energy_gap} and Lemma~\ref{lem:one_side_compare}, we have
	\[
	|\M| \leq \widebar{\E_\rS} (\mathrm{L}_h f_h) \leq \big(1 + C(h^2 + \eta^2_h) \big) \, E_\rS(f_h),
	\]
	and taking the infimum over $f_h$ gives the lower bound.
	On the other hand, Moser's theorem \cite{Mose65, Bany1974} provides an orientation-preserving diffeomorphism $g$ with $J_g = 1$. Lemma~\ref{lem:recover_smooth_map} and \cite[Lemma~15]{LiKY26} ensure that, for sufficiently small $h$, the interpolant satisfies $\mathrm I_hg \in \A_h$ and
	\[
	E_\rS(\mathrm{I}_hg) \leq |\Mh| + Ch^2.
	\]
	In addition, the estimate~\eqref{eq:correction} gives $J_{p_h} \geq 1 - Ch^2$ and hence $|\Mh|\leq|\M|+Ch^2$. Therefore,
	\[
	\inf_{\A_h} E_\rS \leq E_\rS(\mathrm{I}_hg)  \leq |\M| + Ch^2,
	\]
	which gives the upper bound.
\end{proof}

This generalizes the result of \cite[Theorem~16]{LiKY26}, which only considers the space of diffeomorphisms. It also provides a bound on the area distortion of almost minimizers.

\begin{corollary}\label{cor:distortion}
	Let $\M$ be a compact smooth surface and let $\Mh$ be the triangular mesh approximating $\M$ satisfying Assumption~\ref{ass:source}. Suppose $f_h \in \A_h$ as in \eqref{eq:admissible_set} satisfies
	\begin{equation} \label{eq:almost_min}
		E_\rS(f_h) \leq \inf_{\A_h} E_\rS + \varepsilon_h,\qquad \varepsilon_h\geq0,\quad \varepsilon_h\to0 .
	\end{equation}
	If $|\M| = |\N|$, then
	\[
	\| \widehat J_{\mathrm{L}_hf_h} - 1 \|^2_{L^2(\M)} \leq \big( 1 + C(h^2 + \eta_h^2) \big) \, ( |\M| + Ch^2 + \varepsilon_h) - |\M|,
	\]
	and the right-hand side tends to zero as $h \to 0$.
\end{corollary}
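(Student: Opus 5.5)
The bound follows by chaining three earlier results: the energy-gap identity \eqref{eq:energy_gap} applied to the lift, the one-sided comparison of Lemma~\ref{lem:one_side_compare}, and the upper bound on $\inf_{\A_h}E_\rS$ from Corollary~\ref{cor:min_converge}.

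First, I would check that $\widebar{\E_\rS}(\mathrm{L}_hf_h)<\infty$. This comes directly from Lemma~\ref{lem:one_side_compare}, since $E_\rS(f_h)<\infty$ for a piecewise-linear map. Hence $\mu_{\mathrm{L}_hf_h}=\widehat J_{\mathrm{L}_hf_h}\,\d A_\M$ with $\widehat J_{\mathrm{L}_hf_h}\in L^2(\M)$. Its total mass is $\mu_{\mathrm{L}_hf_h}(\M)=|\N|=|\M|$, because $\mathrm{L}_hf_h$ is a homeomorphism onto $\N$. So \eqref{eq:energy_gap} applies and gives
\[
\|\widehat J_{\mathrm{L}_hf_h}-1\|^2_{L^2(\M)}=\widebar{\E_\rS}(\mathrm{L}_hf_h)-|\M|.
\]

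Second, I would bound $\widebar{\E_\rS}(\mathrm{L}_hf_h)$ in three steps:
\begin{itemize}
\item Lemma~\ref{lem:one_side_compare} gives $\widebar{\E_\rS}(\mathrm{L}_hf_h)\le(1+C(h^2+\eta_h^2))E_\rS(f_h)$.
\item The almost-minimality \eqref{eq:almost_min} gives $E_\rS(f_h)\le\inf_{\A_h}E_\rS+\varepsilon_h$.
\item The upper bound of Corollary~\ref{cor:min_converge} gives $\inf_{\A_h}E_\rS\le|\M|+Ch^2$.
\end{itemize}
Substituting these into the identity above yields the stated inequality. The constants may differ between the lemma and the corollary, so I would take $C$ to be their maximum.

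Finally, the right-hand side tends to zero because $h\to0$, $\eta_h\to0$ and $\varepsilon_h\to0$. Expanding, it equals $C(h^2+\eta_h^2)(|\M|+Ch^2+\varepsilon_h)+Ch^2+\varepsilon_h$, and each term vanishes.

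There is no real obstacle here. The only point needing care is justifying the use of \eqref{eq:energy_gap}, namely that the lift has finite energy and its area measure has total mass $|\M|$; both follow as described in the first step.
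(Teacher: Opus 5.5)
Your proposal is correct and matches the paper's proof: you chain the energy-gap identity \eqref{eq:energy_gap} for the lift, Lemma~\ref{lem:one_side_compare}, the almost-minimality \eqref{eq:almost_min}, and the bound $\inf_{\A_h}E_\rS\le|\M|+Ch^2$ from Corollary~\ref{cor:min_converge}. Your extra check that the lift has finite energy and total mass $|\N|=|\M|$ makes explicit the hypotheses of \eqref{eq:energy_gap} that the paper uses implicitly.
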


\begin{proof}
	By \eqref{eq:energy_gap} and $|\M| = |\N|$,
	\[
	\| \widehat J_{\mathrm{L}_hf_h} - 1 \|^2_{L^2(\M)}  = \widebar{\E_\rS} (\mathrm{L}_hf_h) - |\M|.
	\]
	By Lemma~\ref{lem:one_side_compare} and \eqref{eq:almost_min}, we obtain
	\begin{align*}
		\| \widehat J_{\mathrm{L}_hf_h} - 1 \|^2_{L^2(\M)}  &\leq \big( 1 + C(h^2 + \eta_h^2) \big) E_\rS(f_h) - |\M|\\
		&\leq \big( 1 + C(h^2 + \eta_h^2) \big) \Big( \inf_{\A_h} E_\rS + \varepsilon_h\Big) - |\M|.
	\end{align*}
	Since $\inf_{\A_h} E_\rS \leq |\M| + Ch^2$ by Corollary~\ref{cor:min_converge}, we obtain the upper bound, which tends to zero because $\eta_h\to0$ and $\varepsilon_h\to0$.
\end{proof}

In applications such as surface registration, we may need to compute an area-preserving parameterization while matching landmark points. Therefore, the stretch energy may be combined with other terms, such as a penalty on the mismatch of landmarks. $\Gamma$-convergence is stable under continuous perturbations~\cite[Remark~1.7]{Brai02}.

\begin{corollary}
\label{cor:perturb}
Let $\{\Mh\}$ satisfy Assumption~\ref{ass:source}.
If $G:\mathrm{Homeo}^+(\M,\N)\to[0,\infty)$ is continuous with respect to $d_H$, then
\[
\widebar{E_\rS}_h+G\xrightarrow[\;h\to0\;]{\;\Gamma(d_H)\;}\widebar{\E_\rS}+G.
\]
\end{corollary}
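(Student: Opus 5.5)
The plan is to verify the two defining inequalities of Theorem~\ref{thm:main} directly for the perturbed functionals, using only the continuity of $G$ with respect to $d_H$ and the fact that $G$ is finite. Both inequalities should reduce to the corresponding statements for $\widebar{E_\rS}_h$ and $\widebar{\E_\rS}$, because $G(u_h)\to G(u)$ whenever $u_h\to u$.

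For the liminf inequality, fix $u\in\mathrm{Homeo}^+(\M,\N)$ and any sequence $u_h\to u$ in $d_H$. Continuity gives $G(u_h)\to G(u)$, which is a genuine limit and finite since $G$ takes values in $[0,\infty)$. Superadditivity of the liminf then gives
\[
\liminf_{h\to0}\big(\widebar{E_\rS}_h(u_h)+G(u_h)\big)\geq\liminf_{h\to0}\widebar{E_\rS}_h(u_h)+\lim_{h\to0}G(u_h)\geq\widebar{\E_\rS}(u)+G(u),
\]
where the last step uses Theorem~\ref{thm:main}(i). If the liminf of $\widebar{E_\rS}_h(u_h)$ is $+\infty$, the inequality is trivial, because adding a finite convergent term does not affect an infinite liminf.

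For the limsup inequality, take the recovery sequence $u_h\to u$ given by Theorem~\ref{thm:main}(ii). If $\widebar{\E_\rS}(u)=+\infty$, there is nothing to prove. Otherwise, since the second term converges,
\[
\limsup_{h\to0}\big(\widebar{E_\rS}_h(u_h)+G(u_h)\big)=\limsup_{h\to0}\widebar{E_\rS}_h(u_h)+G(u)\leq\widebar{\E_\rS}(u)+G(u).
\]
Here the equality holds because $\limsup(a_h+b_h)=\limsup a_h+\lim b_h$ whenever $b_h$ converges to a finite limit.

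There is no substantial obstacle in this argument. The only point needing care is the extended-real arithmetic: finiteness of $G$ rules out $\infty-\infty$ situations. The same recovery sequence serves for both functionals, so no new construction is required. This matches the general stability of $\Gamma$-convergence under continuous perturbations in \cite[Remark~1.7]{Brai02}.
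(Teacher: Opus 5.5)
Your proposal is correct and follows the paper's proof essentially line for line. Both use $G(u_h)\to G(u)<\infty$ to split off the perturbation in the liminf and limsup, then apply Theorem~\ref{thm:main}(i) along arbitrary convergent sequences and reuse the recovery sequence from Theorem~\ref{thm:main}(ii). Your remarks on extended-real arithmetic and on the trivial case $\widebar{\E_\rS}(u)=+\infty$ are harmless details that the paper leaves implicit.
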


\begin{proof}
If $u_h\to u$ in $\mathrm{Homeo}^+(\M,\N)$, then $G(u_h)\to G(u)$, so Theorem~\ref{thm:main}(i) gives
\[
\liminf_{h\to0}\big(\widebar{E_\rS}_h(u_h)+G(u_h)\big)=\liminf_{h\to0}\widebar{E_\rS}_h(u_h)+G(u)\geq\widebar{\E_\rS}(u)+G(u).
\]
For the recovery sequence $\{u_h\}$ of Theorem~\ref{thm:main}(ii), the same continuity gives
\[
\limsup_{h\to0}\big(\widebar{E_\rS}_h(u_h)+G(u_h)\big)=\limsup_{h\to0}\widebar{E_\rS}_h(u_h)+G(u)\leq\widebar{\E_\rS}(u)+G(u).\qedhere
\]
\end{proof}

For instance, a landmark penalty takes the form
\[
G(u):=\lambda\sum_{i=1}^n|u(x_i)-y_i|^2,
\]
with landmarks $x_1,\ldots,x_n\in\M$, targets $y_1,\ldots,y_n\in\N$, and a weight $\lambda>0$. This term penalizes the mismatch between $u(x_i)$ and $y_i$. Let $a_i = u(x_i) - y_i$ and $b_i = v(x_i) - y_i$.
Since
\begin{align*}
|G(u)-G(v)| &\leq \lambda \sum_{i = 1}^n \Big| |a_i|^2 - |b_i|^2 \Big|
= \lambda \sum_{i = 1}^n \Big| (|a_i| + |b_i|) \, (|a_i| - |b_i|) \Big|
\end{align*}
and
\[
\big||a_i| - |b_i|\big| \leq |a_i - b_i| \leq d_H(u, v), \qquad
|a_i| + |b_i| \leq 2\operatorname{diam}(\N),
\]
where $\operatorname{diam}(\N)=\max_{p,q\in\N}|p-q|$ is the diameter of $\N$, we have
\[
|G(u)-G(v)|\leq2\lambda n\operatorname{diam}(\N)\,d_H(u,v),
\]
so $G$ is continuous on $\mathrm{Homeo}^+(\M,\N)$, and hence Corollary~\ref{cor:perturb} applies.

\small
\bibliographystyle{abbrvurl}
\bibliography{reference}

\end{document}